\documentclass[11pt]{article}
\usepackage{graphicx}
\usepackage{cite}
\usepackage{color}
\usepackage{amsmath,amssymb,amsthm,amsfonts}
\usepackage{amssymb}
\newtheorem{thm}{Theorem}[section]

\newtheorem{lem}[thm]{Lemma}
\newtheorem{prop}[thm]{Proposition}

\theoremstyle{definition}

\theoremstyle{remark}

\numberwithin{equation}{section}

\DeclareMathSymbol{\C}{\mathalpha}{AMSb}{"43}
\usepackage{amsmath}
\allowdisplaybreaks[4]

\newcommand{\DETAILS}[1]{}

\newcommand{\eps}{\varepsilon}

\newcommand{\lam}{\lambda}

\newcommand{\R}{{\mathbb{R}}}

\newcommand{\inte}{\int_{\mathbb{R}^3}}

\DeclareMathOperator{\tr}{Tr}

\newcommand{\bsub}{\begin{subequations}}
\newcommand{\esub}{\end{subequations}$\!$}

\begin{document}
\title{On the Limiting Behavior of $L^2$-Critical Pseudo-Relativistic Fermi Systems}

\author{
Bin Chen\thanks{School of Mathematics and Statistics,  Key Laboratory of Nonlinear Analysis $\&$ Applications (Ministry of Education), Central China Normal University, Wuhan 430079, P. R. China; and State Key Laborotary of Mathematical Sciences, Academy of Mathematics and Systems Science, Chinese Academy of Sciences, Beijing 100190, P. R. China. B. Chen  is partially supported by NSF of China (Grant 12501151). Email: binchen@amss.ac.cn.}, Yinbin Deng\thanks{School of Mathematics and Statistics, Hubei Key Laboratory of Mathematical Sciences, Central China Normal University, Wuhan 430079, P. R. China. Y. B. Deng is partially supported by NSF of China (Grant 12271196) and National Key R $\&$ D Program of China (Grant 2023YFA1010002). Email: ybdeng@ccnu.edu.cn.},
Yujin Guo\thanks{School of Mathematics and Statistics,  Key Laboratory of Nonlinear Analysis $\&$ Applications (Ministry of Education), Central China Normal University, Wuhan 430079, P. R. China. Y. J. Guo is partially supported by NSF of China (Grants 12225106 and 12371113) and National Key R $\&$ D Program of China (Grant 2023YFA1010001). Email: yguo@ccnu.edu.cn.},
\ and Chenyang Wang\thanks{School of Mathematics and Statistics,  Key Laboratory of Nonlinear Analysis $\&$ Applications (Ministry of Education), Central China Normal University, Wuhan 430079, P. R. China. Email:  wangcy789@mails.ccnu.edu.cn.}
}

\date{\today}

\smallbreak \maketitle

\begin{abstract}
We consider   ground states of a pseudo-relativistic Fermi system in the $L^2$-critical case. We prove that the system admits ground states, if and only if the attractive strength $a$ satisfies $0<a<D_{4/3,2}$, where  $D_{4/3,2}\in(0, \infty)$ is the optimal constant of a dual fractional Lieb--Thirring inequality. The limiting behavior of ground states for the system is further analyzed as $a\nearrow D_{4/3,2}$. As a byproduct, the qualitative properties of  optimizers for the dual fractional Lieb-Thirring inequality are also investigated.
\end{abstract}
\vskip 0.05truein

\noindent {\it Keywords:}  $L^{2}$-critical Fermi systems; Ground states; Fractional Lieb-Thirring inequality; Limiting behavior

\vskip 0.2truein
\section{Introduction}
The quantum many-body problems have received  a lot of attentions since they were proposed as  rigorous mathematical models in 1926 by E.  Schr\"odinger (cf. \cite{ES26}). 
It is well known that  all elementary particles in quantum mechanics are fundamentally classified into two categories  in terms of their spin quantum numbers: bosons and fermions.
Identical bosons can occupy the exact same quantum state, whereas  fermions obey the Pauli exclusion principle, which prohibits two identical fermions from occupying  simultaneously the same quantum state. This leads to the fact that studying ground states of Fermi systems is  more challenging than studying ground states of  bosonic systems.
Ground states of bosonic systems have been studied extensively since the past few decades, see \cite{Lion84b, MM,Lieb87,Eh09} and the references therein. In spite of this fact,  as far as we know, there exist however fewer analysis for ground states of fermionic systems, especially in the pseudo-relativistic case, see \cite{Duke,Lieb87,JPS10, cb3} and the references therein.  We also  refer to \cite{cb1, Finite, Arma, CMP, Geo} and the references therein for the analysis of nonrelativistic fermionic systems.

In this paper, we consider ground states of the following $L^2$-critical  pseudo-relativistic Fermi system:
\begin{align}\label{pro11}
E_{a}(N):=\inf \Big\{ \mathcal{E}_{a}(\Psi): \ \|\Psi\|_{2}^{2}=1, \Psi\in \land^{N} L^{2}(\R^{3}, \C)\cap  H^{\frac{1}{2}}(\R^{3N},\C)
\Big\},\ \ a>0,
\end{align}
where $N\in\mathbb{N}^+$ denotes the number of  spinless fermions,  the energy functional $\mathcal{E}_{a}(\Psi)$  is defined by
\begin{equation*}\label{a16}
\mathcal{E}_{a}(\Psi):=\sum_{i=1}^{N}\Big\langle \Psi, \,   \big(\sqrt{-\Delta_{x_{i}}+m^{2}}-m\big)\Psi\Big\rangle -a\int_{\R^{3}}\rho_{\Psi}^{\frac{4}{3}}(x)dx,\ \ m>0,
\end{equation*}
and  $\Psi\in \land^{N} L^{2}(\R^{3}, \C)$ is an antisymmetric wave function. Here the pseudo-differential operator $\sqrt{-\Delta+m^2}-m$ describes the kinetic energy of relativistic fermions with rest mass $m>0$,
the parameter $a>0$ represents the attractive strength of the interactions among the fermions,   and the one-particle density $\rho_{\Psi}$ of $\Psi$ is defined as
\begin{align*}
\rho_{\Psi}(x):=N\int_{\R^{3(N-1)}}|\Psi(x, x_{2}, \cdots, x_{N})|^{2}dx_{2}\cdots dx_{N}.
\end{align*}
By the  density functional argument (cf. \cite{cb1,Arma}), the problem (\ref{pro11}) can be reduced equivalently to the following form
\begin{align}\label{problem}
E_{a}(N)=\inf \Big\{ \mathcal{E}_{a}(\gamma):&\
\gamma=\sum_{i=1}^{N}|u_{i}\rangle \langle u_{i}|,\,\ u_{i}\in H^{\frac{1}{2}}(\R^{3},\C),\\
&\ \langle u_{i},u_{j}\rangle_{L^{2}}=\delta_{ij}, \ i,j=1, \cdots, N \Big\},\ \ a>0,\ N\in\mathbb{N}^+,\nonumber
\end{align}
where the functional $\mathcal{E}_{a}(\gamma)$ is defined as
\begin{align}\label{EVdf}
\mathcal{E}_{a}(\gamma):=\tr(\sqrt{-\Delta+m^{2}}-m)\gamma-a\int_{\R^{3}}\rho_{\gamma}^{\frac{4}{3}}dx,\ \ \ m>0,
\end{align}
$\gamma=\sum_{i=1}^{N}|u_{i}\rangle \langle u_{i}|$ denotes an $N$-dimensional orthogonal projection operator on  $L^{2}(\R^{3},\C)$, namely,
\begin{align*}
\big(\gamma\varphi\big)(x)=\sum_{i=1}^{N}u_{i}(x)\inte \varphi(y) \bar{u}_i(y)dy,\ \ \  \forall\  \varphi\in  L^{2}(\R^{3},\C),
\end{align*}
and  $\rho_{\gamma}(x):=\sum_{i=1}^{N}|u_{i}(x)|^{2}$  denotes the corresponding density of $\gamma$.

Following the equivalence \eqref{problem}, throughout this paper we thus focus on the analysis of the constrained minimization problem \eqref{problem}, instead  of \eqref{pro11},  where the pseudo-differential operator $\sqrt{-\Delta+m^{2}}$ is defined via the Fourier transform, $i.e.$, for any $u\in H^{\frac{1}{2}}(\R^{3},\C)$,
\begin{align*}	(\sqrt{-\Delta+m^{2}}u)\string^(\xi)=\sqrt{|\xi|^{2}+m^{2}}\hat{u}(\xi), \  \ \  \hat{u}(\xi)=\int_{\R^{3}}u(x)e^{-2\pi i \xi\cdot x}dx.
\end{align*}
Note that the  minimization problem $E_{a}(1)$ is essentially an $L^{2}$-critical boson problem.
Further, it was proved in \cite{2017siam,LMP2014} that when an external potential $V(x)$ is imposed to the non-relativistic $L^2$-critical bosonic problem $E_{a}(1)$,  the corresponding system admits ground states if and only if $0<a<a^*$, where  $0<a^*<\infty$ is a critical strength of the attractive interactions for the system.  Moreover,  the critical   strength $a^{*}>0$ is determined by the unique optimizer of the classical Gagliardo--Nirenberg (GN) inequality  studied in \cite{CMP1983}.

The purpose of this paper is to analyze the existence and limiting behavior of minimizers for the fermionic problem  $E_{a}(N)$ with $N=2$. It turns out to be  closely connected with the following  Gagliardo-Nirenberg inequality of orthonormal systems: for $N\in\mathbb{N}^+$,
$$
\|\gamma\|^{1/3}\tr(\sqrt{-\Delta}\gamma)\geq D_{4/3,N}\inte \rho_\gamma^{4/3} dx,\ \,  \forall\ \gamma\in \mathcal{R}_{N},
$$
where
\begin{align}\label{RNdef}
\mathcal{R}_{N}:=\Big\{\gamma\in \mathcal{B}(L^{2}(\R^{3}, \C))\setminus \{0\}: \, 0\le \gamma=\gamma^{*},\  \text{Rank}(\gamma)\le N,\ \tr(\sqrt{-\Delta}\gamma)<\infty      \Big\},
\end{align}
and $\mathcal{B}(L^{2}(\R^{3}, \C))$ denotes the set of bounded linear operators on $L^{2}(\R^{3}, \C)$.  Applying the spectral theorem (cf. \cite{Arma}), for any $\gamma\in\mathcal{R}_{N}$, there exist an orthonormal system $\{u_i\}_{i=1}^{R_N}\subset L^{2}(\R^{3}, \C)$ and a sequence $\{n_i\}_{i=1}^{R_N}\subset \R^+$ such that
$$
\gamma=\sum_{i=1}
^{R_N}n_i|u_i\rangle\langle u_i|,\  \,   R_N\in[1, N],
$$
in the sense that
\begin{align*}
\big(\gamma\varphi\big)(x)=\sum_{i=1}^{R_{N}}n_iu_{i}(x)\inte \varphi(y) \bar{u}_i(y)dy,\ \,    \forall\  \varphi\in  L^{2}(\R^{3},\C).
\end{align*}
More generally, we consider the following dual fractional Lieb--Thirring inequality: for $1< p\le \frac{4}{3}$ and $N\in\mathbb{N}^+$,
\begin{align}\label{LTineq}
\|\gamma\|_{\mathfrak{S}^{q}}^{\frac{3-2p}{3(p-1)}}\tr(\sqrt{-\Delta}\gamma)\geq D_{p,N}\|\rho_{\gamma}\|_{L^{p}(\R^{3})}^{\frac{p}{3(p-1)}},\ \, \forall \ \gamma\in \mathcal{R}_{N},
\end{align}
where $D_{p,N}\geq0$ denotes the best constant, and
\begin{align}\label{1.5}
\|\gamma\|_{\mathfrak{S}^{q}}:=
\left\{
\begin{array}{lll}
\!\!\big(\tr \gamma^{q}\big)^{\frac{1}{q}},\  &\text{if}\ \ 1\le q<\infty; \\[1mm]
\!\!	\|\gamma\|,\ \ &\text{if}\ \ q=\infty
\end{array}\right.
\end{align}
satisfying
\begin{align}\label{q}
q:=\left\{\begin{array}{lll}
\!\!\dfrac{3-2p}{4-3p},\  &\text{if}\ \ 1< p<\frac{4}{3}; \\[2mm]
\!\!+\infty,\ \ &\text{if}\ \ p=\frac{4}{3}
\end{array}\right.
\end{align}
denotes the $q$-th Schatten  norm of $\gamma$.

\subsection{Main results}

The purpose of this subsection is to introduce the main results of the present paper.
Our first result is concerned with the following existence and nonexistence of minimizers for the problem $E_{a}(2)$ defined in (\ref{problem}).

\begin{thm}\label{th21}
Let $E_{a}(2)$  and  $D_{4/3,2}$ be defined by $(\ref{problem})$  and $(\ref{LTineq})$, respectively. Then we have the following conclusions:
\begin{enumerate}
\item [(1)]  If $0< a < D_{4/3,2}$, then $E_{a}(2)$ admits at least one minimizer $\gamma=\sum_{i=1}^{2}|u_{i}\rangle \langle u_{i}|$, where the orthonormal system  $(u_{1} ,u_{2})$ satisfies  the following fermionic nonlinear Schr\"odinger system
\begin{align}\label{1.18}
H_{\gamma}u_{i}:=\Big[\sqrt{-\Delta+m^{2}}-m-\frac{4a}{3}\rho_\gamma^{\frac{1}{3}}\Big]u_{i}=\mu_{i}u_{i}\ \ \text{in}\ \, \R^{3}, \ \ i=1, 2.
\end{align}
Here $\mu_{1}<\mu_{2}<0$ are the first two  eigenvalues  of the operator $H_{\gamma}$ on  $L^2(\R^{3},\C)$.

\item [(2)]  If $a\ge D_{4/3,2}$, then $E_{a}(2)$ does not admit any minimizer.
\end{enumerate}
\end{thm}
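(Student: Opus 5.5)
Part (1) will go through the concentration-compactness principle for the $N=2$ problem, with the binding inequality used to exclude dichotomy. Part (2) will use a scaling argument built on an optimizer of \eqref{LTineq}, which I take as given for $p=4/3$, $N=2$ (its existence being part of the byproduct analysis announced in the abstract).

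\emph{Lower bound and coercivity.} For any admissible $\gamma$ we have $\|\gamma\|=1$. The operator inequality $\sqrt{-\Delta+m^2}-m\ge\sqrt{-\Delta}-m$ together with \eqref{LTineq} gives
\[
\mathcal E_a(\gamma)\ge \Big(1-\frac{a}{D_{4/3,2}}\Big)\tr(\sqrt{-\Delta}\gamma)-2m .
\]
So for $a<D_{4/3,2}$ the energy $E_a(2)$ is finite, and minimizing sequences are bounded in $H^{1/2}$. Trial states with small mass built from spread-out functions show $E_a(N)<0$ for $N=1,2$.

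\emph{Existence for $0<a<D_{4/3,2}$.} Take a minimizing sequence $\gamma_n=\sum_{i=1}^2|u_i^n\rangle\langle u_i^n|$. After translation, the $u_i^n$ converge weakly to some $u_i$, and $\gamma_n\rightharpoonup\gamma$ with $0\le\gamma\le1$ and $\tr\gamma=\lambda\in[0,2]$.

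\begin{enumerate}
\item[(i)] Vanishing would force $\rho_{\gamma_n}\to0$ in $L^{4/3}$ by Lions' lemma. Then $\liminf\mathcal E_a(\gamma_n)\ge0>E_a(2)$, a contradiction.
\item[(ii)] Relax the problem to $E_a(\lambda)$ over $0\le\gamma\le1$ with $\tr\gamma=\lambda$. By the standard bathtub/concavity argument this relaxed problem has projection minimizers when $\lambda$ is an integer.
\item[(iii)] IMS localization and Brezis--Lieb splitting give
\[
E_a(2)\ge E_a(\lambda)+E_a(2-\lambda).
\]
\item[(iv)] The strict binding inequality $E_a(2)<E_a(\lambda)+E_a(2-\lambda)$ for $0<\lambda<2$ then forces $\lambda=2$.
\item[(v)] With $\lambda=2$ we get strong $L^2$ convergence. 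Weak lower semicontinuity of the kinetic term shows $\gamma$ is a minimizer, and it is a rank-2 projection.
\end{enumerate}

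I expect the strict binding inequality in (iv) to be the main obstacle. The $m$-term breaks scaling, and the fermionic constraint prevents a trivial superposition argument. I would first try to prove it by placing two minimizers far apart, orthogonalizing them, and exploiting the strict convexity gain of $\rho\mapsto-\int\rho^{4/3}$ from the overlap of their exponentially decaying tails.

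\emph{Euler--Lagrange equation and eigenvalue ordering.} Differentiating along unitary and orthonormality-preserving variations gives $H_\gamma\gamma=\gamma H_\gamma$, and after diagonalization we obtain \eqref{1.18} with Lagrange multipliers $\mu_i$. The essential spectrum of $H_\gamma$ is $[0,\infty)$, since $\rho_\gamma^{1/3}\to0$ at infinity. The aufbau principle then shows that the $u_i$ span the two lowest eigenstates. Otherwise, replacing a $u_i$ by a lower eigenvector, or by an almost-generalized eigenvector with energy near $0$, lowers the energy to first order. The second-order term of $-a\int\rho^{4/3}$ is nonpositive, so the total energy strictly decreases, which is a contradiction. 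This also yields $\mu_2<0$. Strictness $\mu_1<\mu_2$ follows because the ground state of a Schrödinger-type operator with $\sqrt{-\Delta+m^2}$ is simple: the semigroup is positivity improving.

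\emph{Nonexistence for $a\ge D_{4/3,2}$.} Let $\gamma^*$ be an optimizer of \eqref{LTineq}. Normalize it to $\|\gamma^*\|=1$; I would need the optimizer to be a rank-2 projection, or else approximate it by one. Set $\gamma_\tau=U_\tau\gamma^*U_\tau^*$ with $U_\tau$ the $L^2$-unitary dilation $(U_\tau u)(x)=\tau^{3/2}u(\tau x)$. Then
\[
\mathcal E_a(\gamma_\tau)\le\tau\,(D_{4/3,2}-a)\int\rho_{\gamma^*}^{4/3}+\tr\big[(\sqrt{-\Delta+m^2}-\sqrt{-\Delta})\gamma_\tau\big]-2m .
\]
The middle term is at most $\frac{m^2}{2\tau}\tr(|\nabla|^{-1}\gamma^*)$. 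Letting $\tau\to\infty$ gives $E_a(2)=-\infty$ when $a>D_{4/3,2}$, and $E_a(2)\le-2m$ when $a=D_{4/3,2}$. In the latter case, the strict inequality $\sqrt{-\Delta+m^2}-m>\sqrt{-\Delta}-m$ together with \eqref{LTineq} gives $\mathcal E_a(\gamma)>-2m$ for every admissible $\gamma$. Hence $E_a(2)=-2m$ is not attained.
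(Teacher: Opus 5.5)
\textbf{Gap in step (iv).} You need $E_a(2)<E_a(\lambda)+E_a(2-\lambda)$ for \emph{every} $\lambda\in(0,2)$, and you propose to prove it by placing two minimizers far apart. For $\lambda\neq1$, one of the masses $\lambda$, $2-\lambda$ lies in $(1,2)$, and there is no a priori minimizer of $E_a$ at such a mass. Producing one is a compactness problem of exactly the kind you are trying to solve: in the dichotomy scenario with $\lambda\in(0,1)$, the escaping part of mass $2-\lambda$ is only a minimizing sequence. So the far-apart construction is only available for the integer split. There you use two translated, orthogonalized copies of a minimizer of $E_a(1)$, which exists by Lemma~\ref{lem34}(1) since $a<D_{4/3,2}\le D_{4/3,1}$.

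The missing idea is that fractional splittings reduce to this one by concavity. For fixed orthonormal $(u_1,u_2)$, the maps $s\mapsto\mathcal E_a(s|u_1\rangle\langle u_1|)$ and $s\mapsto\mathcal E_a(|u_1\rangle\langle u_1|+s|u_2\rangle\langle u_2|)$ are concave on $[0,1]$: the kinetic term is linear and $-a\int\rho^{4/3}$ is concave. By the description \eqref{pn} of $\mathcal P_\lambda$, taking infima shows $\lambda\mapsto E_a(\lambda)$ is concave on $[0,1]$ and on $[1,2]$, with the convention $E_a(0)=0$. Hence for $\lambda\in(0,1)$,
\[
E_a(\lambda)+E_a(2-\lambda)\ \ge\ \lambda E_a(1)+\big[\lambda E_a(1)+(1-\lambda)E_a(2)\big]\ >\ E_a(2)
\]
as soon as $E_a(2)<2E_a(1)$, and symmetrically for $\lambda\in(1,2)$.

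This is why the paper's Lemma~\ref{lem34}(2) needs only the integer condition \eqref{3.13}. The paper then obtains \eqref{3.13} by essentially your tail-interaction argument applied to two copies of an $E_a(1)$-minimizer, using the two-sided exponential asymptotics of Lemma~\ref{prop34}(2). Up to powers of $R$, the convexity gain from the region between the two centres is of order $e^{-\frac43\theta_1R}$, while the orthogonalization error is of order $e^{-2\theta_1R}$. Note that this comparison uses decay from below as well as from above.

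\textbf{Two smaller points.} First, in part (2) your fallback ``or else approximate it by one'' fails at $a=D_{4/3,2}$. For a rank-two projection $\gamma$ that is only a near-optimizer, $\mathcal E_{D_{4/3,2}}(\gamma_\tau)+2m$ contains $\tau\big(\tr(\sqrt{-\Delta}\gamma)-D_{4/3,2}\int\rho_\gamma^{4/3}\big)>0$, which grows linearly in $\tau$. So you cannot reach $E_{D_{4/3,2}}(2)=-2m$, and without that equality the strict bound $\mathcal E_{D_{4/3,2}}(\gamma)>-2m$ does not exclude a minimizer. You need an exact optimizer that is a rank-two projection. Proposition~\ref{th11} provides this: for $p=4/3$, every optimizer with $\|\gamma\|=1$ has all occupation numbers equal to $1$, and its rank is exactly $2$ because $D_{4/3,2}<D_{4/3,1}$. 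Your bound via $\tr(|\nabla|^{-1}\gamma^*)$ also requires this trace to be finite, which follows from \eqref{a15} and Hardy--Kato; dominated convergence as in \eqref{3.5} avoids the issue altogether.

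Second, in the aufbau step, ``the second-order term is nonpositive'' does not handle the borderline case $\mu_2=0$. Compare instead with $\gamma-t|u_2\rangle\langle u_2|\in\mathcal P_{2-t}$, and use the strict decrease of $\lambda\mapsto E_a(\lambda)$ together with the strict convexity of $t\mapsto t^{4/3}$, as in the proof of Lemma~\ref{lem2.1}. This gives $E_a(2)<E_a(2)-t\mu_2$, hence $\mu_2<0$.
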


Theorem \ref{th21} gives a complete classification on the existence and nonexistence of minimizers for $E_{a}(2)$ in terms of $a>0$.  We remark that the proof of Theorem \ref{th21} can be extended naturally to  the more general problem $E_{a}(N)$, where $ N\in\mathbb{N}^{+}$ satisfies $D_{4/3,N-1}>D_{4/3,N}$.
Compared with the existing results in \cite{cb3,Arma}, where the existence of minimizers for $L^2$-subcritical Fermi systems was studied, it is more challenging to analyze the non-existence of Theorem \ref{th21}. More precisely, the non-existence of Theorem \ref{th21} relies on   the energy estimates of $E_{a}(N)$ given in Lemma \ref{lem32},  whereas the existence of Theorem \ref{th21} is established by proving the strict binding inequality $E_{a}(2)<2E_{a}(1)$. Towards the proof of Theorem \ref{th21},  we need to make full use of the following qualitative properties of the optimizers for $D_{p,N}$ defined by \eqref{LTineq}.

\begin{prop}\label{th11}  Let $D_{p,N}$ be defined by $(\ref{LTineq})$, where $1< p\le \frac{4}{3}$ and $N\in \mathbb{N}^{+}$. Then we have
\begin{enumerate}
\item [(1)] The best constant  $D_{p,N}\in(0, \infty)$ can be attained.

\item  [(2)] Any optimizer $\gamma$ of $D_{p,N}$ satisfying $\|\gamma\|_{\mathfrak{S}^{q}}=1$ has rank $R_{N}\!\in\![1,  N]$ and can be written as
\begin{equation}\label{gaform}
\gamma=\sum_{j=1}^{R_{N}}n_{j}|w_{j}\rangle \langle w_{j}|,\ \,  \langle w_{j}, w_{k}\rangle=\delta_{jk}\ \  \text{for}\ \  i,j=1,\cdots, R_{N},
\end{equation}
where $q\in(0, +\infty]$ is given by  \eqref{q},
\begin{align}\label{njeq}
n_{j}\left\{
\begin{array}{lll}
\!\!\!\in(0, 1],\ \ &\text{if}\ \ 1< p<\frac{4}{3}; \\[3mm]
\!\!\! =1,\ \ &\text{if}\ \ p=\frac{4}{3},
\end{array}\right.
\end{align}
and  the  orthonormal system $(w_{1},\cdots ,w_{R_{N}})$ solves the following  nonlinear system
\begin{equation}\label{Qeq}
	H^*_{\gamma}w_{j}:=\Big(\sqrt{-\Delta}-pC(p, N)\rho_{\gamma}^{p-1}\Big)w_{j}=\mu_{j}w_{j}\ \, \ in \ \R^3,\  \, j=1,\cdots,R_{N}.
\end{equation}
Here $C(p, N)=\frac{D_{p,N}^{3(p-1)}}{3(p-1) \big(\tr(\sqrt{-\Delta}\gamma)\big)^{3p-4}}>0$, and $\mu_1<\mu_2\le \cdots\leq\mu_{R_N}<0$ are  the $R_{N}$ first eigenvalues, counted with multiplicity, of the operator $H^*_{\gamma}$ on $L^2(\R^3,\C)$.

\item [(3)]  There exists an infinite sequence of integers $N_{1}=1<N_{2}=2<N_{3}<\cdots$ such that the rank of any optimizer of  $D_{p,N_{k}}$  is $N_{k}$, and
\begin{align}\label{1.10}
	D_{p,N_{k}}<D_{p,N_{k}-1},\ \, k\geq2.
\end{align}
\end{enumerate}
\end{prop}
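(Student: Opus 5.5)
The plan follows the scheme of Frank--Gontier--Lewin for the dual Lieb--Thirring inequality in the nonrelativistic case, adapted to $\sqrt{-\Delta}$. Both sides of \eqref{LTineq} are invariant under $\gamma\mapsto t\gamma$ and under the dilation $\gamma\mapsto U_\lambda\gamma U_\lambda^*$, where $(U_\lambda u)(x)=\lambda^{3/2}u(\lambda x)$. Indeed, the exponents are chosen so that $\tr(\sqrt{-\Delta}\gamma)$ and $\|\rho_\gamma\|_p^{p/(3(p-1))}$ both scale like $\lambda$. So I would study the maximization of
$$
\mathcal{F}(\gamma):=\frac{\|\rho_\gamma\|_{p}^{\frac{p}{3(p-1)}}}{\|\gamma\|_{\mathfrak{S}^q}^{\frac{3-2p}{3(p-1)}}\tr(\sqrt{-\Delta}\gamma)}
$$
over $\mathcal{R}_N$, so that $D_{p,N}^{-1}=\sup\mathcal{F}$.

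\textbf{Step 1: $D_{p,N}\in(0,\infty)$.} Positivity follows by evaluating $\mathcal{F}$ at any rank-one $\gamma=|u\rangle\langle u|$ with $u\in H^{1/2}$. For finiteness, I would use $\rho_\gamma\le \sum_{j}n_j|u_j|^2$ with $n_j\le\|\gamma\|$ together with H\"older in the Schatten exponents, or more simply the fractional Sobolev/GN inequality applied to each $u_j$ and summed over at most $N$ terms. This bounds $\|\rho_\gamma\|_p$ by a constant $C(N)$ times the appropriate power of $\tr(\sqrt{-\Delta}\gamma)$ and $\|\gamma\|_{\mathfrak S^q}$. Since $N$ is finite, an $N$-dependent constant is enough.

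\textbf{Step 2: existence of an optimizer.} Take a maximizing sequence normalized by $\|\gamma_n\|_{\mathfrak S^q}=1$ and $\tr(\sqrt{-\Delta}\gamma_n)=1$. Write $\gamma_n=\sum_{j\le N}n_j^{(n)}|u_j^{(n)}\rangle\langle u_j^{(n)}|$; then the $u_j^{(n)}$ are bounded in $H^{1/2}$. I would argue by induction on $N$ via concentration compactness, using translations to rule out vanishing because $\|\rho_{\gamma_n}\|_p$ is bounded below. If dichotomy occurs, $\gamma_n\approx\gamma^{(1)}_n+\gamma^{(2)}_n$ with separating supports and ranks $K$ and $N-K$. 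The strict superadditivity of $t\mapsto t^{p/(3(p-1))}$ (the exponent is $>1$ since $p\le 4/3<3/2$), combined with the optimal constants $D_{p,K}$ and $D_{p,N-K}$, then shows that one piece alone does at least as well. Passing to that piece gives a maximizer of lower rank, which is still in $\mathcal R_N$. Weak lower semicontinuity of the kinetic term and of $\|\cdot\|_{\mathfrak S^q}$, together with local compactness of $\rho$, closes the argument. I expect this step to be the main obstacle: the $\mathfrak S^q$ norm and the rank constraint have to be handled together in the splitting, and for $p=4/3$ the operator norm is not additive, so there I would compare $\max\{\|\gamma^{(1)}\|,\|\gamma^{(2)}\|\}$ directly.

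\textbf{Step 3: Euler--Lagrange equation and the occupation numbers.} Fix an optimizer with $\|\gamma\|_{\mathfrak S^q}=1$. I would perturb it inside $\mathcal R_N$ in two ways: varying the eigenfunctions gives \eqref{Qeq} with the stated $C(p,N)$, and varying the $n_j$ gives the occupation conditions. For $q<\infty$, differentiating in $n_j$ yields $n_j^{q-1}\propto-\mu_j$. A rearrangement argument shows the $w_j$ must be the lowest eigenfunctions of $H^*_\gamma$; otherwise, swapping in a lower eigenvector with the same $n_j$ strictly increases $\mathcal F$. The $\mu_j$ are negative because $\mu_j$ equals the derivative of the quotient, and because $\sigma_{ess}(H^*_\gamma)=[0,\infty)$. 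Together with the normalization this forces $n_j\in(0,1]$. For $p=4/3$ only $\|\gamma\|$ enters, so raising any $n_j<1$ up to $\max n_k$ strictly increases $\rho$ while increasing the kinetic term only linearly. The homogeneity degree $4/3\cdot 1>1$ of $\rho\mapsto\|\rho\|_{4/3}^{4/3}$ in the power appearing in $\mathcal F$ then shows $\mathcal F$ increases, hence $n_j=1$. Simplicity of $\mu_1$ comes from positivity of the ground state of $\sqrt{-\Delta}-V$ with $V\ge0$.

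\textbf{Step 4: the sequence $N_k$.} $D_{p,N}$ is nonincreasing in $N$ and bounded below by $0$. Set $N_1=1$. Given $N_k$, let $N_{k+1}$ be the smallest $N>N_k$ with $D_{p,N}<D_{p,N_k}$. Then every optimizer of $D_{p,N_{k+1}}$ has full rank, since an optimizer of rank $<N_{k+1}$ would give $D_{p,N_{k+1}-1}\le D_{p,N_{k+1}}$, contradicting the choice. It remains to show that such an $N$ always exists, i.e., that $D_{p,N}$ is not eventually constant. If $D_{p,N}=D_{p,N_k}$ for all $N\ge N_k$, take an optimizer $\gamma$ of rank $N_k$ together with a far-translated copy $\gamma(\cdot-Re)$. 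Using \eqref{Qeq} and the decay of the $w_j$ (polynomial, like $|x|^{-4}$, for $\sqrt{-\Delta}$), expand $\mathcal F$ for the rank-$2N_k$ state $\gamma+\gamma(\cdot-Re)$, suitably orthonormalized. The interaction cross terms in $\|\rho\|_p^p$ should beat the orthogonalization cost, giving a strict gain, which is a contradiction. The same construction with $N_k=1$ shows directly that $N_2=2$.
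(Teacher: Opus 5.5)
For (2) and (3) you follow the paper's route. That means unitary and occupation-number variations for the Euler--Lagrange equation and $n_j^{q-1}\propto-\mu_j$, the swap argument for ``first $R_N$ eigenvalues'', positivity of the ground state for $\mu_1<\mu_2$, and doubling a full-rank optimizer with L\"owdin orthonormalization to get $D_{p,2N}<D_{p,N}$. Defining $N_{k+1}$ as the first $N>N_k$ with $D_{p,N}<D_{p,N_k}$ is a tidy way to run the induction.

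For $D_{p,N}>0$, your per-orbital bound $\int|u|^{2p}\le C\|u\|_2^{6-4p}\langle u,\sqrt{-\Delta}u\rangle^{3(p-1)}$ plus H\"older in $j$ (exponents $4-3p$ and $3(p-1)$) is a valid, more elementary replacement for the paper's duality with the fractional Lieb--Thirring inequality. Note that you have the labels swapped: the rank-one test state gives $D_{p,N}<\infty$, and the GN bound gives $D_{p,N}>0$.

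\textbf{Existence step.} The paper defers this to Frank--Gontier--Lewin, and your version does not work as described. Localizing a rank-$N$ state does not produce pieces of ranks $K$ and $N-K$: the single orbital $(\phi+\phi(\cdot-x_n))/\sqrt2$ yields two rank-one pieces. So there is no induction on $N$ and no ``maximizer of lower rank''. Strict superadditivity is also not what operates, since at $p=4/3$ the additive quantity $\int\rho^{4/3}$ enters $\mathcal{F}$ linearly.

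What saves the argument is that $(4-3p)+3(p-1)=1$. Apply the inequality to $\chi\gamma_n\chi$ and $\eta\gamma_n\eta$, which both lie in $\mathcal{R}_N$. Combine H\"older with $\tr(\chi\gamma\chi)^q+\tr(\eta\gamma\eta)^q\le\tr\gamma^q$ (valid for $q\ge1$), or with $\|\chi\gamma\chi\|,\|\eta\gamma\eta\|\le\|\gamma\|$ when $q=\infty$. Then each piece must be asymptotically optimal, so the nonzero weak limit after translation is itself an optimizer. Its rank is $\le N$ because that condition is weakly closed, and no strict binding inequality is needed. Also, only $\sqrt{n_j^{(n)}}\,u_j^{(n)}$ is bounded in $H^{1/2}$. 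Orbitals with $n_j^{(n)}\to0$ still drop out of $\int\rho^p$ by the same GN bound, since $3-2p>0$.

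\textbf{Step 3 for $p=4/3$.} You cannot get $\mu_j<0$ from $\sigma_{\rm ess}(H^*_\gamma)=[0,\infty)$, because a threshold eigenvalue $0$ is not excluded. For $\sqrt{-\Delta}$ in $\R^3$ a zero-energy solution decays like $|x|^{-2}$, which is square integrable. The paper instead tests $\gamma-t|w_j\rangle\langle w_j|$, which keeps $\|\gamma_t\|\le 1$, and uses strict convexity of $s\mapsto s^{4/3}$.

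Likewise, raising $n_j<1$ does not increase $\mathcal{F}$ ``by homogeneity''. By the Euler--Lagrange equation, the first-order change of $\log\mathcal{F}$ is $-t\mu_j/\big(D_{4/3,N}\int\rho_\gamma^{4/3}\big)$, so $n_j=1$ rests exactly on $\mu_j<0$. This matters downstream: the $|x|^{-4}$ tails you use in Step 4 come from the Green's function of $\sqrt{-\Delta}-\mu_j$ and are lost if $\mu_j=0$.

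\textbf{Step 4.} Your ``should beat'' is where the content of (3) lies, and it needs the lower bound $w_1\ge C_1(1+|x|^4)^{-1}$, not only the upper bound. That lower bound comes from positivity of $w_1$ plus the exact asymptotics of $|x|^4w_1$. With it, the quantitative comparison is:
\begin{itemize}
\item the overlaps are $O(R^{-4})$;
\item by the Euler--Lagrange equation the first-order cross terms are products of two overlaps, so the orthonormalization costs $O(R^{-8})$;
\item the gain $\int[(\rho_\gamma+\rho_{\gamma_R})^p-\rho_\gamma^p-\rho_{\gamma_R}^p]$ on $B_R$ is $\gtrsim R^{3-8p}$.
\end{itemize}
The gain wins because $3-8p>-8$ exactly when $p<11/8$, which holds for $p\le 4/3$.
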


The proof of Proposition \ref{th11} is stimulated by \cite{CMP} and the references therein, which handle mainly the non-relativistic fermionic systems. In spite of this fact, compared with the aforementioned works, since the inequality   $(\ref{LTineq})$ involves the relativistic operator $\sqrt{-\Delta}$, one needs to employ a different argument for improving the regularity of optimizers  and proving the strict monotonicity \eqref{1.10}. Moreover,  because we only require the optimizer $\gamma$ to satisfy the general normalization condition $\|\gamma\|_{\mathfrak{S}^q} = 1$, more involved calculations are needed for analyzing the explicit expression of $\gamma$.

By applying Proposition \ref{th11}, we next investigate the following limiting behavior of minimizers for $E_{a}(2)$ as $a\nearrow D_{4/3,2}$. 

\begin{thm}\label{th22}
Let $\gamma_{a}=\sum_{i=1}^{2}|u^{a}_{i}\rangle \langle u^{a}_{i}|$ be a minimizer of $E_{a}(2)$ satisfying \eqref{1.18} as  $a\nearrow D_{4/3,2}$.   Then there exist a subsequence $\{\gamma_{a_n}\}$ of $\{\gamma_a\}$ and a sequence $\{y_{a_n}\}\subset \R^{3}$, where $a_n\nearrow D_{4/3,2}$ as $n\to\infty$, such that for $i=1,2$,
\begin{align}\label{cv21}
\begin{split}
w_{i}^{a_{n}}(x):=\eps_{a_{n}}^{\frac{3}{2}}u_{i}^{a_{n}}\big(\eps_{a_{n}}(x+y_{a_n})\big)
\to w_{i}(x)\quad \\
 \text{strongly in} \, \ H^{\frac{1}{2}}(\R^{3}, \C)\cap L^{\infty}(\R^{3}, \C)\ \, n\to \infty,
\end{split}
\end{align}
where  $\gamma:=\sum_{i=1}^{2}|w_{i}\rangle \langle w_{i}|$ is an optimizer of the variational problem
\begin{equation*}
d_{*}:=\inf \big\{ \tr\frac{\gamma}{\sqrt{-\Delta}}: \ \gamma \   \text{is an optimizer of}\ D_{4/3,2} \ \text{satisfying}\  \|\gamma\|=\tr\big(\sqrt{-\Delta}\gamma\big)=1
\big\},
\end{equation*}
and $\eps_{a_{n}}:=(\tr\sqrt{-\Delta}\gamma_{a_{n}})^{-1}>0$ satisfies
\begin{align}\label{epsd}
\eps_{a_{n}}\sim \Big[\frac{2(D_{4/3,2}-a_{n})}{D_{4/3,2}m^{2}d_{*}}\Big]^{\frac{1}{2}}\ \ \text{as}\ \ n\to \infty.
\end{align}
\end{thm}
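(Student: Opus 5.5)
The plan is to bound $E_a(2)$ from above with trial states and from below with $D_{4/3,2}$, compare the two bounds, extract compactness from the resulting tightness, and then identify the limit. The key algebraic tool is the operator expansion
\begin{equation*}
\sqrt{-\Delta+m^2}-m=\sqrt{-\Delta}-m+\frac{m^2}{\sqrt{-\Delta+m^2}+\sqrt{-\Delta}},
\end{equation*}
whose last term is comparable to $\frac{m^2}{2\sqrt{-\Delta}}$ at high frequencies.

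\textbf{Upper bound.} Take an optimizer $\gamma_0=\sum_{i=1}^2|w_i\rangle\langle w_i|$ of $D_{4/3,2}$ with $\|\gamma_0\|=\tr(\sqrt{-\Delta}\gamma_0)=1$ that realizes $d_*$ (or comes within $\delta$ of it). Proposition \ref{th11} guarantees that such optimizers exist and have $n_j=1$. Use the scaled trial state $\gamma_\tau$ with $u_i^\tau=\tau^{3/2}w_i(\tau\cdot)$. Then:
\begin{itemize}
\item the kinetic term $\tr\big((\sqrt{-\Delta+m^2}-\sqrt{-\Delta})\gamma_\tau\big)$ equals $-2m+\frac{m^2}{2\tau}\tr\frac{\gamma_0}{\sqrt{-\Delta}}+o(\tau^{-1})$;
\item $a\int\rho_{\gamma_\tau}^{4/3}=\frac{a}{D_{4/3,2}}\tau$.
\end{itemize}
This gives
\begin{equation*}
E_a(2)\le -2m+\Big(1-\frac a{D}\Big)\tau+\frac{m^2d_*}{2\tau}+o(\tau^{-1}).
\end{equation*}
Here $\tr\frac{\gamma_0}{\sqrt{-\Delta}}<\infty$ needs the regularity and decay of optimizers from Proposition \ref{th11}. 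Optimizing in $\tau$ yields
\begin{equation*}
E_a(2)+2m\le \big[2m^2d_*(D-a)/D\big]^{1/2}(1+o(1)).
\end{equation*}

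\textbf{Lower bound and blow-up.} Since $\|\gamma_a\|=1$, the inequality $\int\rho^{4/3}\le D^{-1}\tr\sqrt{-\Delta}\gamma_a$ gives
\begin{equation*}
E_a(2)+2m\ge (1-a/D)\,\eps_a^{-1}+\tr\Big(\frac{m^2}{\sqrt{-\Delta+m^2}+\sqrt{-\Delta}}\gamma_a\Big).
\end{equation*}
Combined with the upper bound, this shows $\eps_a\to0$. Otherwise $\gamma_a$ would be bounded in $H^{1/2}$, and passing to the limit would produce a minimizer at $a=D$ (or a contradiction via $E_D(2)=-2m$ not attained, as in Theorem \ref{th21}(2)). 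It also gives $(D-a)\eps_a^{-1}\lesssim (D-a)^{1/2}$, so $\eps_a\gtrsim(D-a)^{1/2}$.

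Now rescale to $w_i^a$, so that $\tr\sqrt{-\Delta}\gamma^{w}_a=1$. The energy comparison gives
\begin{equation*}
\int\rho_{w_a}^{4/3}\to 1/D,
\end{equation*}
so $\gamma^w_a$ is a minimizing sequence for $D_{4/3,2}$ with rank two and bounded $H^{1/2}$ norm.

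\textbf{Compactness — the main obstacle.} I would run a concentration–compactness argument in the spirit of the proof of Proposition \ref{th11}(1):
\begin{itemize}
\item choose $y_a$ where $\rho$ has nonvanishing local mass, which is possible because $\int\rho^{4/3}$ stays bounded away from zero;
\item exclude dichotomy using the strict inequality $D_{4/3,2}<D_{4/3,1}$ from Proposition \ref{th11}(3). Splitting into two rank-one pieces, or into a rank-two piece of reduced mass, would cost energy by the superadditivity of $D^{-1}$-type bounds and $\|\gamma\|\le1$.
\end{itemize}
This gives $w_i^{a}\to w_i$ in $L^2$, hence in $L^{4/3}$ for the densities. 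The limit $\gamma$ is then an optimizer of $D_{4/3,2}$ with $\|\gamma\|=1$, so $\gamma$ is a rank-two projection. Since $\tr\sqrt{-\Delta}\gamma=1$, the $H^{1/2}$ convergence is strong.

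For $L^\infty$ convergence, write \eqref{1.18} in rescaled form:
\begin{equation*}
\Big(\sqrt{-\Delta+\eps^2m^2}-\tfrac{4a}{3}\rho_{w_a}^{1/3}\Big)w_i^a=\eps(\mu_i+m)w_i^a.
\end{equation*}
Bootstrap with the bounded resolvent in $L^p$ spaces, using the boundedness of $\eps\mu_i$ obtained by testing the equation.

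\textbf{Identification of $d_*$ and of \eqref{epsd}.} Put the rescaled state back into the lower bound:
\begin{equation*}
E_a+2m\ge(1-a/D)\eps_a^{-1}+\eps_a\frac{m^2}{2}\tr\frac{\gamma^w_a}{\sqrt{-\Delta}}(1+o(1)).
\end{equation*}
Fatou in Fourier space gives $\liminf\tr\frac{\gamma_a^w}{\sqrt{-\Delta}}\ge\tr\frac{\gamma}{\sqrt{-\Delta}}\ge d_*$. Comparing with the upper bound forces equality in AM–GM, which yields both $\tr\frac{\gamma}{\sqrt{-\Delta}}=d_*$ and the asymptotics \eqref{epsd}.

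One delicate point is the uniform validity of the replacement of the kinetic correction by $\frac{m^2}{2\sqrt{-\Delta}}$ at scale $\eps$. This requires tightness of $\hat w^a$ near the origin, which should follow from the equation.
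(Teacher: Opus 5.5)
Your architecture is the paper's. The upper bound comes from a dilated optimizer of $d_*$ together with $(\sqrt{-\Delta+t^{-2}m^2}+\sqrt{-\Delta})^{-1}\le(2\sqrt{-\Delta})^{-1}$. The lower bound is $E_a(2)+2m\ge\eps_a^{-1}(1-a/D_{4/3,2})+\eps_a m^2T_a$, with $T_a:=\tr\big[\tilde\gamma_a(\sqrt{-\Delta+\eps_a^2m^2}+\sqrt{-\Delta})^{-1}\big]$ for the rescaled state $\tilde\gamma_a$. Then come blow-up, rescaling, non-vanishing, no loss of mass, and AM--GM.

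The real difference is the step you flag as delicate. The paper settles it with Lemma~\ref{lem44}: uniform $H^1$ bounds, a uniform $|x|^{-2}$ Green's function bound (which uses $\lim\eps_{a_n}\mu_i^{a_n}<0$), and uniform decay $C(1+|x|^4)^{-1}$. Hardy--Kato and dominated convergence then give $T_{a_n}\to\frac12\tr\frac{\gamma}{\sqrt{-\Delta}}$.

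You do not need any of this. Drop the $(1+o(1))$ replacement: as written it is an unproved lower bound, in the nontrivial direction opposite to $(\sqrt{|\xi|^2+\eps^2m^2}+|\xi|)^{-1}\le(2|\xi|)^{-1}$. Instead apply Fatou directly to $T_a=\sum_i\int|\hat w_i^a|^2(\sqrt{|\xi|^2+\eps_a^2m^2}+|\xi|)^{-1}d\xi$. Its integrand converges a.e.\ (along a subsequence) to $|\hat w_i|^2/(2|\xi|)$ once $w_i^{a_n}\to w_i$ in $L^2$. This gives $\liminf_n T_{a_n}\ge\frac12\tr\frac{\gamma}{\sqrt{-\Delta}}\ge\frac{d_*}{2}$. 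Comparison with your upper bound then forces $\tr\frac{\gamma}{\sqrt{-\Delta}}=d_*$, $T_{a_n}\to d_*/2$ and \eqref{epsd}, with no tightness near $\xi=0$ needed.

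Likewise, $L^\infty$ convergence follows from $L^2$ convergence by a smooth frequency split:
\begin{itemize}
\item the low-frequency part is bounded by $\|\hat w_i^{a}-\hat w_i\|_{L^1(|\xi|\lesssim1)}\lesssim\|w_i^{a}-w_i\|_2$;
\item on $|\xi|\gtrsim1$ the multiplier $(\sqrt{|\xi|^2+\eps^2m^2}-\eps(m+\mu_i^a))^{-1}$ is of order $-1$ uniformly in $\eps$ (only $\mu_i^a<0$ is used), so it maps $L^r\to L^\infty$ for $r>3$.
\end{itemize}
That is your bootstrap, run on differences. So your route bypasses the uniform decay lemma entirely. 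The paper's route is heavier but yields the uniform algebraic decay of $w_i^{a_n}$ and $W^{2,2}$ bounds as by-products.

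Your remark that the two energy bounds already force $\eps_a\to0$ can also be made elementary, avoiding the non-existence argument at $a=D_{4/3,2}$. If $\tr\sqrt{-\Delta}\gamma_a\le C$, Cauchy--Schwarz gives $\tr\frac{m^2\gamma_a}{\sqrt{-\Delta+m^2}+\sqrt{-\Delta}}\ge\frac{m^2}{2}\tr\frac{\gamma_a}{\sqrt{-\Delta+m^2}}\ge\frac{2m^2}{C+2m}$. This is incompatible with $E_a(2)+2m\to0$.

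Two statements in your sketch should be corrected. First, in this critical problem dichotomy does not ``cost energy''. Each localized piece (generally still of rank two, not one) only satisfies $\tr\sqrt{-\Delta}\gamma_k\ge D_{4/3,2}\int\rho_{\gamma_k}^{4/3}$, with no strict gap. The working mechanism, as in \eqref{c4381}, is that the IMS splitting forces the nonzero weak limit to satisfy $\tr\sqrt{-\Delta}\gamma\le D_{4/3,2}\int\rho_\gamma^{4/3}\le\|\gamma\|^{1/3}\tr\sqrt{-\Delta}\gamma$. Hence $\|\gamma\|=1$ and $\gamma$ is an optimizer, so by Proposition~\ref{th11} it is a rank-two projection and $\tr\gamma=2$. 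The strict inequality $D_{4/3,2}<D_{4/3,1}$ enters only through this rank statement.

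Second, a uniformly bounded $L^2$ resolvent requires $\lim\eps_a\mu_i^a<0$, not merely boundedness of $\eps_a\mu_i^a$. The paper gets this in \eqref{4.33} by identifying the limit equation with \eqref{Qeq} and using $\mu_j<0$ from Proposition~\ref{th11}. The frequency split above avoids the issue altogether.
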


The proof of Theorem \ref{th22} needs the above auxiliary minimization problem $d_{*}$, for which we shall prove in Lemma \ref{lem41} that $d_{*}\in (0,\infty)$ can be  attained. Even though the function $w_{i}^{a_{n}}$ decays exponentially as $n\to\infty$, we shall  prove in Lemma  \ref{lem44} that the sequence  $\{w_{i}^{a_{n}}\}_n$ just satisfies the uniform algebraic decay as $n\to\infty$ by establishing the $H^{\frac{1}{2}}$-convergence of $w_{i}^{a_{n}}$ as $n\to\infty$, where  $i=1,2$.
Applying above results, together with the refined analysis of the energy $E_{a_{n}}(2)$ as $n\to\infty$, in Section 4 we are then able to establish \eqref{epsd} and as well the following energy estimate:
\begin{equation}\label{spi}
E_{a}(2)+2m\sim \sqrt{2}m\Big(\frac{D_{4/3,2}-a}{D_{4/3,2}}d_{*}\Big)^{\frac{1}{2}}\ \ \ \text{as}\ \ a\nearrow D_{4/3,2}.
\end{equation}


This paper is organized as follows. In Section 2, we establish Proposition \ref{th11} on the qualitative properties of optimizers for the dual fractional Lieb-Thirring inequality (\ref{LTineq}).  In Section 3 we then prove Theorem \ref{th21} on the existence and nonexistence of minimizers for the problem $E_a(2)$.  Section 4 is devoted to the proof of Theorem \ref{th22}, which is concerned with the limiting behavior of minimizers for $E_{a}(2)$ as $a\nearrow D_{4/3,2}$.

\section{A Dual Fractional Lieb--Thirring Inequality}\label{lt}
Let   $D_{p,N}\geq0$  be the best constant of the dual fractional Lieb--Thirring inequality \eqref{LTineq}, where $1<p\le \frac{4}{3}$ and $ N\in \mathbb{N}^{+}$. The main purpose of this section is to prove Proposition \ref{th11},  which  is concerned with   the optimal constant $D_{p,N}$ and the optimizers of \eqref{LTineq}.

We first note that the optimal constant  $D_{p,N}$ of \eqref{LTineq} is actually positive for the case $1<p\leq4/3$ and $N\in \mathbb{N}^+$.  Actually, one can recall from \cite{EHLRS,revis} the following classical fractional Lieb-Thirring inequality: for $1<p\leq4/3$,
\begin{equation*}\label{LT1}
\sum_{j=1}^{\infty}\big|\lam_{j}(\sqrt{-\Delta}-V)\big|^{\kappa}\le L_{\kappa}\int_{\R^{3}}V_{-}^{\kappa+3}dx, \  \   \forall\  V\in L^{\kappa+3}(\R^{3}, \R),
\end{equation*}
where  $\kappa:=\frac{3-2p}{p-1}>0,\ L_{\kappa}\in(0, \infty)$ denotes the best constant, and  $V_+(x):=\max\{V(x), 0\}$ is the positive part of $V$. Here $\lambda_j\left(\sqrt{-\Delta}-V\right)\leq0$ denotes the $j$th min-max level of the operator $\sqrt{-\Delta}-V$ on $L^2(\R^3,\C)$, which equals to the $j$th negative eigenvalue (counted with multiplicity) of the operator $\sqrt{-\Delta}-V$ on $L^2(\R^3,\C)$ if it exists, and vanishes otherwise. It then follows that  for any $N\in \mathbb{N}^+$, there exists an optimal constant $L_{\kappa,N}\in(0, \infty)$ such that
\begin{equation*}\label{LT2}
\sum_{j=1}^{N}\big|\lam_{j}(\sqrt{-\Delta}-V)\big|^{\kappa}\le L_{\kappa,N}\int_{\R^{3}}V_{-}^{\kappa+3}dx, \  \   \forall\  V\in L^{\kappa+3}(\R^{3}, \R).
\end{equation*}
By the same  dual argument of \cite[Lemma 5]{CMP},  one can check that
\begin{equation*}\label{indent}
L_{\kappa,N}D_{p,N}^{3}=\Big(\frac{\kappa}{\kappa+3}\Big)^{\kappa}\, \Big (\frac{3}{\kappa+3}\Big)^{3},
\end{equation*}
which therefore implies that $D_{p,N}>0$ holds for any $1<p\leq4/3$ and $N\in \mathbb{N}^+$.

In order to further study the dual fractional Lieb--Thirring inequality \eqref{LTineq}, we rewrite  it as the following equivalent minimization problem:
\begin{align}\label{def:eN}
D_{p,N}=\inf\limits_{\gamma\in \mathcal{R}_{N}}  \dfrac{\|\gamma\|_{\mathfrak{S}^{q}}^{\frac{3-2p}{3(p-1)}}\tr\big(\sqrt{-\Delta}\gamma\big)}
{\|\rho_{\gamma}\|_{p}^{\frac{p}{3(p-1)}}},\ \ 1<p\leq4/3,
\end{align}
where $\mathcal{R}_{N}$ and $\|\cdot\|_{\mathfrak{S}^{q}}$ are as in \eqref{RNdef} and \eqref{1.5}, respectively.
We begin with the following existence  and qualitative properties of
optimizers, which proves Proposition \ref{th11} (1).

\begin{lem}\label{lem2.1}
For any fixed  $1<p\leq\frac{4}{3}$ and $N\in\mathbb{N}^+$, the problem  $D_{p,N}$ defined by \eqref{def:eN}  admits at least one optimizer. Moreover,  any optimizer $\gamma$ of  $D_{p,N}$ satisfying $\|\gamma\|_{\mathfrak{S}^{q}}=1$ can be written as the form
\begin{equation}\label{gaform1}
\gamma=\sum_{j=1}^{R_{N}}n_{j}|w_{j}\rangle \langle w_{j}|,\ \,  \langle w_{j}, w_{k}\rangle=\delta_{jk},\ \ R_{N}\in[1,\,  N],
\end{equation}
where the constants  $q\in(0, +\infty]$  and  $n_j\in(0,1]$ are as in \eqref{q} and \eqref{njeq},  respectively, and the  orthonormal system  $(w_1, \cdots, w_{R_N})$ satisfies
\begin{align}\label{eq}
H^*_\gamma w_j:=
\Big[\sqrt{-\Delta}-\frac{pD_{p,N}^{3(p-1)}}{3(p-1) \big(\tr(\sqrt{-\Delta}\gamma)\big)^{3p-4}}\rho_{\gamma}^{p-1}\Big]w_{j}=\mu_jw_{j}\ \ \text{in}\ \ \R^3
\end{align}
\text{for\ some} $\mu_1\leq\mu_2\cdots\leq\mu_{R_N}<0$.
\end{lem}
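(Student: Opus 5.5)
The plan is to follow the concentration-compactness strategy of \cite{CMP}, adapted to $\sqrt{-\Delta}$. We first record that the quotient in \eqref{def:eN} is invariant under the dilations $\gamma\mapsto \lambda\gamma$ and $\gamma\mapsto U_\ell\gamma U_\ell^*$ with $(U_\ell u)(x)=\ell^{3/2}u(\ell x)$, and under translations. For the dilation invariance, note that $\|U_\ell\gamma U_\ell^*\|_{\mathfrak S^q}$ is unchanged, $\tr(\sqrt{-\Delta}\,\cdot)$ scales like $\ell$, and $\|\rho\|_p^{p/(3(p-1))}$ scales like $\ell^{3(p-1)\cdot\frac{p}{3(p-1)}\cdot\frac1p}=\ell$. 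Consequently we may take a minimizing sequence $\gamma_n\in\mathcal R_N$ with
\[
\|\gamma_n\|_{\mathfrak S^q}=1,\qquad \tr(\sqrt{-\Delta}\gamma_n)=1,\qquad \|\rho_{\gamma_n}\|_p^{p/(3(p-1))}\to D_{p,N}^{-1}.
\]
Since the rank is at most $N$, we have $\tr\gamma_n\le N\|\gamma_n\|\le N$. Writing $\gamma_n=\sum_j n_j^{(n)}|u_j^n\rangle\langle u_j^n|$, the functions $\sqrt{n_j^{(n)}}u_j^n$ are bounded in $H^{1/2}$, so $\sqrt{\rho_{\gamma_n}}$ is bounded in $H^{1/2}$ by the convexity inequality for $\sqrt{-\Delta}$. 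Also $\rho_{\gamma_n}$ is bounded in $L^1\cap L^{3/2}$ and does not vanish in $L^p$. By Lions' lemma, applied to the finitely many orbitals, we find translations $y_n$ such that $\gamma_n(\cdot+y_n)\rightharpoonup\gamma\neq0$ weakly-$*$ in the trace class, with $\gamma\in\mathcal R_N$, since the rank is lower semicontinuous under weak convergence of finite-rank operators.

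The main obstacle is showing that $\gamma$ is an optimizer, that is, excluding dichotomy. We will use the standard strict superadditivity argument. Set $\gamma_n=\gamma+r_n$ with $r_n\rightharpoonup0$ (after a localisation by smooth cutoffs $\chi_R,\eta_R$, using an IMS-type localisation error for $\sqrt{-\Delta}$, which is $O(R^{-1})\tr\gamma_n$ and therefore vanishes). Then we have
\begin{itemize}
\item[(i)] $\tr\sqrt{-\Delta}\gamma_n=\tr\sqrt{-\Delta}\gamma+\tr\sqrt{-\Delta}r_n+o(1)$;
\item[(ii)] $\int\rho_{\gamma_n}^p=\int\rho_\gamma^p+\int\rho_{r_n}^p+o(1)$, by Brezis--Lieb;
\item[(iii)] $\|\gamma_n\|_{\mathfrak S^q}^q\ge\|\gamma\|^q_{\mathfrak S^q}+\|r_n\|_{\mathfrak S^q}^q+o(1)$ for $q<\infty$, and the maximum of the two norms when $q=\infty$.
\end{itemize}
Each piece has rank at most $N$. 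Applying the inequality \eqref{LTineq} to $\gamma$ and to $r_n$, and using that the exponent $\frac{p}{3(p-1)}\ge 1$ together with $(3-2p)/(3(p-1))\cdot$ the appropriate homogeneity, the usual convexity comparison $t^\alpha+(1-t)^\alpha<1$ for $\alpha>1$ forces the limit to be compact: either $\int\rho_{r_n}^p\to0$ or $\gamma=0$. Since $\gamma\neq0$, we conclude $\rho_{\gamma_n}\to\rho_\gamma$ in $L^p$. Weak lower semicontinuity of the numerator then shows that $\gamma$ attains $D_{p,N}$. For $p=4/3$ the $\mathfrak S^\infty$ norm is treated via the max property, and the strictness comes from the density term.

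Next, for the structure of optimizers we normalize $\|\gamma\|_{\mathfrak S^q}=1$ and perturb. For $\delta$ self-adjoint, finite-rank, and preserving rank at most $N$, we take first variations of the logarithm of the quotient. If $q<\infty$, the derivative of $\tr\gamma^q$ gives the Euler--Lagrange identity
\[
\sqrt{-\Delta}-pC\rho_\gamma^{p-1}=-\theta\,\gamma^{q-1}\quad\text{on }\operatorname{Ran}\gamma,
\]
with a multiplier $\theta>0$. It also gives $\ge\theta'$ on $\operatorname{Ran}\gamma^\perp$, by varying $\gamma$ with $\epsilon|v\rangle\langle v|$ while decreasing another occupation to keep the rank. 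Thus $w_j$ are eigenfunctions of $H^*_\gamma$ with $\mu_j=-\theta n_j^{q-1}$, and the multiplier is computed explicitly from the homogeneity in $\gamma\mapsto\lambda\gamma$, which gives the constant $C(p,N)$ in \eqref{eq}. Positivity of $\theta$ (hence $\mu_j<0$) follows from testing against $\gamma$ itself. The bound $n_j\le1$ comes from $\tr\gamma^q=1$ for $q<\infty$. For $p=4/3$ we use $\|\gamma\|=1$ and show that all $n_j=1$: if some $n_j<1$, increasing it leaves $\|\gamma\|$ fixed and strictly increases $\int\rho^{4/3}$, contradicting optimality. Finally, regularity, namely $w_j\in H^{1/2}$ and $\rho_\gamma^{p-1}$ bounded enough for the eigenvalue equation to make sense, will follow from a bootstrap using $\rho_\gamma\in L^1\cap L^{3/2}$ and the resolvent of $\sqrt{-\Delta}$.
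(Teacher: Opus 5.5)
Your plan has a genuine gap at the step that should produce the eigenvalue equation \eqref{eq} in $\R^3$. Your variations are linear perturbations $\gamma+t\delta$ keeping the rank $\le N$, and adding $\epsilon|v\rangle\langle v|$ with $v\perp\operatorname{Ran}\gamma$. Let $P$ be the projection onto $\operatorname{Ran}\gamma$. These variations only probe the diagonal blocks $PH^*_\gamma P$ and $P^\perp H^*_\gamma P^\perp$.

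Off-diagonal linear perturbations are not admissible. Take $\delta=|v\rangle\langle w_j|+|w_j\rangle\langle v|$ with $v\perp\operatorname{Ran}\gamma$. Then $\gamma+t\delta$ acts on $\mathrm{span}\{w_j,v\}$ by the matrix with rows $(n_j,t)$ and $(t,0)$. This has a negative eigenvalue $\approx -t^2/n_j$, so $\gamma+t\delta\notin\mathcal R_N$; if $\operatorname{rank}\gamma=N$ it also raises the rank. Hence nothing in your argument gives $P^\perp H^*_\gamma P=0$. That is exactly what is needed for $w_j$ to be an eigenfunction of $H^*_\gamma$ on $L^2(\R^3)$, rather than of the compression $PH^*_\gamma P$. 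So the ``thus'' after your Euler--Lagrange identity on $\operatorname{Ran}\gamma$ is unjustified. Also, decreasing an occupation number does not lower the rank unless it is set to zero, so your swap is not admissible when $\operatorname{rank}\gamma=N$.

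The missing idea is the paper's isospectral variation $\gamma(t)=e^{itA}\gamma e^{-itA}$ with $A=|\varphi\rangle\langle\varphi|$. It stays in $\mathcal R_N$ and preserves every Schatten norm, so it also works for $q=\infty$, where $\|\cdot\|$ is not differentiable. The $\gamma^{q-1}$-term drops out by cyclicity of the trace, and the first variation gives $[\gamma,H^*_\gamma]=0$. Simultaneous diagonalization then produces the $w_j$. Only after that does the paper use range-supported variations to get $\mu_j=-\tr(\sqrt{-\Delta}\gamma)\frac{q(1-\theta)}{\theta}n_j^{q-1}<0$ for $p<4/3$, with $\theta=3(p-1)$.

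Two further steps fail as written.

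\textbf{The case $p=4/3$.} There is no $\gamma^{q-1}$-term here, so your sketch gives no sign for $\mu_j$, and your argument for $n_j=1$ is incomplete. Raising $n_j$ raises $\tr(\sqrt{-\Delta}\gamma)$ as well as $\int\rho_\gamma^{4/3}$. Since $\tr(\sqrt{-\Delta}\gamma)=D_{4/3,N}\int\rho_\gamma^{4/3}$, the first-order change of the quotient is a positive multiple of $\mu_j$. You therefore need $\mu_j<0$ first. The paper gets it by testing $\gamma-t|w_j\rangle\langle w_j|$ and using strict convexity of $s\mapsto s^{4/3}$, and only then tests $\gamma+t|w_j\rangle\langle w_j|$.

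\textbf{Existence.} The strict-convexity mechanism does not exclude dichotomy, because the quotient is invariant under $\gamma\mapsto\lambda\gamma$. For the weak limit and the remainder, write $A_i=\tr\gamma_i^q$, $B_i=\tr(\sqrt{-\Delta}\gamma_i)$, $C_i=\int\rho_{\gamma_i}^p$. You only get $C_1+C_2\le D_{p,N}^{-\theta}(A_1^{1-\theta}B_1^{\theta}+A_2^{1-\theta}B_2^{\theta})\le D_{p,N}^{-\theta}(A_1+A_2)^{1-\theta}(B_1+B_2)^{\theta}$. This is an equality whenever $A_1/B_1=A_2/B_2$. In your normalization, the strict inequality $t^{1/\theta}+(1-t)^{1/\theta}<1$ sits on the side that is bounded from above, so it yields no contradiction. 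For $p=4/3$ one has $\theta=1$ and there is no strict convexity at all. So $\rho_{\gamma_n}\to\rho_\gamma$ in $L^p$ is not established. What this chain does give, and what suffices for existence, is that every inequality must be an equality, so the nonzero weak limit $\gamma$ is itself an optimizer.
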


\noindent \textbf{Proof.}
Using the IMS-type localization formula (cf. \cite{LiYau,ims2}),  the existence of optimizers  for $D_{p,N}$ can be proved in a similar way of \cite[Theorem 6 (i)]{CMP}, we thus omit the detailed proof for simplicity. We next prove \eqref{gaform1} by considering separately two different cases.

$Case\ 1: 1<p<\frac{4}{3}$.  Let $\gamma$ be an optimizer of $D_{p,N}$ satisfying $\|\gamma\|_{\mathfrak{S}^{q}}=1$, where $N\in\mathbb{N}^+$ and $q=\frac{3-2p}{4-3p}$.  For simplicity, we denote $\theta:=3(p-1)\in (0,1)$. It then follows from \eqref{1.5} and  (\ref{def:eN})  that
\begin{align}\label{w}
D_{p,N}^{3(p-1)}=\inf\limits_{\tilde{\gamma}\in\mathcal{R}_N}  \dfrac{(\tr\tilde{\gamma}^{q})^{1-\theta}\, \big(\tr(\sqrt{-\Delta}\tilde{\gamma})\big)^{\theta}}{\int_{\R^{3}}\rho_{\tilde{\gamma}}^{p}dx}.
\end{align}
Define
\begin{align}\label{gt1}
\gamma(t):=e^{itA}\gamma e^{-itA}, \ \, \text{where} \,\ t\in \R,\  A=|\varphi\rangle \langle \varphi|\  \, \text{and}\,  \ \varphi\in C_{0}^{\infty}(\R^{3}, \C).
\end{align}
It can be verified that $\gamma(t)\in \mathcal{R}_{N}$ and
\begin{align}\label{gt}
\gamma(t)=\gamma+ti(A\gamma -\gamma A)+o(t):=\gamma +t\delta+o(t)\  \ \ \text{as}\ \ t\to 0.
\end{align}
We thus deduce from \eqref{w} that for sufficiently  small $|t|>0$,
\begin{align}\label{a118}
&D_{p,N}^{3(p-1)}
\le\dfrac{(\tr|\gamma(t)|^{q})^{1-\theta}\big(\tr(\sqrt{-\Delta}\gamma(t))\big)^{\theta}}{\int_{\R^{3}}\rho_{\gamma(t)}^{p}dx}\nonumber\\[2mm]
=&\dfrac{\Big[1+qt\tr(\delta \gamma ^{q-1})+o(t)\Big]^{1-\theta}\Big[\tr(\sqrt{-\Delta}\gamma)+t\tr(\sqrt{-\Delta}\delta)+o(t)\Big]^{\theta}}{\int_{\R^{3}}\rho_{\gamma }^{p}dx+pt\int_{\R^{3}}\rho_{\delta}\rho_{\gamma}^{p-1}dx+o(t)}\\[2.5mm]
=&\dfrac{\Big[1+qt(1-\theta)\tr(\delta \gamma^{q-1})+o(t)\Big]\Big[\big(\tr(\sqrt{-\Delta}\gamma)\big)^{\theta}+t\theta\big(\tr(\sqrt{-\Delta}\gamma)\big)^{\theta-1}\tr(\sqrt{-\Delta}\delta)+o(t)\Big]}{\int_{\R^{3}}\rho_{\gamma }^{p}dx+pt\int_{\R^{3}}\rho_{\delta}\rho_{\gamma}^{p-1}dx+o(t)}\nonumber\\[2mm]
=&D_{p,N}^{3(p-1)}\bigg\{1+\frac{t\theta}{\tr(\sqrt{-\Delta}\gamma) }\tr\Big[\delta\Big(H^*_{\gamma}+\big(\tr(\sqrt{-\Delta}\gamma)\big)\frac{q(1-\theta)}{\theta}\gamma^{q-1}\Big)\Big]+o(t)\bigg\},\nonumber
\end{align}
where the operator $H^*_{\gamma}$ is defined by  (\ref{eq}).

Since (\ref{a118}) holds for sufficiently small $|t|>0$, we obtain from \eqref{gt} and  (\ref{a118})  that
\begin{align}\label{a120}
0&=\tr\Big[\delta\Big(H^*_{\gamma}+\big(\tr(\sqrt{-\Delta}\gamma)\big)\frac{q(1-\theta)}{\theta}\gamma^{q-1}\Big)\Big]\nonumber\\
&=\tr\Big[i(A\gamma-\gamma A)\Big(H^*_{\gamma}+\big(\tr(\sqrt{-\Delta}\gamma)\big)\frac{q(1-\theta)}{\theta}\gamma^{q-1}\Big)\Big]\\
&=\tr\Big[i(A\gamma -\gamma A)H^*_{\gamma }\Big]=\tr\Big[i(\gamma H^*_{\gamma }-H^*_{\gamma }\gamma )A\Big]\nonumber.
\end{align}
Recall from \eqref{gt1} that $A=|\varphi\rangle \langle \varphi|$, where $\varphi\in C_{0}^{\infty}(\R^{3}, \C)$ is arbitrary. This then implies from (\ref{a120}) that
\begin{align*}
\Big\langle\varphi, \ i\big(\gamma H^*_{\gamma }-H^*_{\gamma }\gamma \big)\varphi\Big\rangle=0,\ \, \forall\  \varphi\in L^{2}(\R^{3}, \C).
\end{align*}
Together with the self-adjointness of the operator $i(\gamma H^*_{\gamma }-H^*_{\gamma }\gamma )$, we further obtain that
\begin{align}\label{a121}
\gamma H^*_{\gamma }=H^*_{\gamma }\gamma \ \,  \text{on}\ \ L^{2}(\R^{3}, \C).
\end{align}
By the self-adjointness of the operators $\gamma$ and $H^*_{\gamma}$, we therefore deduce from \eqref{a121} that there exists $R_{N}\in[1,\,  N] $ such that
\begin{align}\label{2.9}
\gamma=\sum_{j=1}^{R_{N}}n_{j}|w_{j}\rangle \langle w_{j}|,\ \,  \langle w_{j}, w_{k}\rangle=\delta_{jk},
\end{align}
where $n_j>0$,  $w_j$ satisfies
\begin{align}\label{2.8}
H^*_{\gamma }w_{j}=\mu_jw_{j}\ \, \text{in}\ \ \R^3,\ \ j=1, \cdots, R_N,
\end{align}
and  $\mu_1\leq\mu_2\leq\cdots\le\mu_{R_N}\in \R.$ Note that $\|\gamma\|_{\mathfrak{S}^{q}}^q=\sum_{j=1}^{R_N}n_j^q=1$, which yields that $n_j\in(0, 1]$ for  $j=1, \cdots, R_N.$

Set $\gamma(t):=\gamma+t\delta$, where $|t|>0$ is sufficiently small, and  $\delta$ is a self-adjoint  operator  supported on the range of $\gamma$.  The same argument of  (\ref{a118}) then gives that
\begin{align*}
H^*_{\gamma }+\big(\tr(\sqrt{-\Delta}\gamma)\big) \frac{q(1-\theta)}{\theta}\gamma ^{q-1}= 0 \ \ \
\text{on}\    \text{span}\{w_{1}, \cdots, w_{R_{N}}\}.
\end{align*}
Together with \eqref{2.8},  we derive that
\begin{align*}
\mu_{j}+\big(\tr(\sqrt{-\Delta}\gamma)\big) \frac{q(1-\theta)}{\theta}n_{j}^{q-1}=0, \ \,  j=1,\cdots, R_{N},
\end{align*}
which  shows that
\begin{align*}
\mu_{j}<0\ \, \text{and} \ \, n_{j}=\Big(\frac{\theta}{q(1-\theta)\tr(\sqrt{-\Delta}\gamma)}\Big)^{\frac{1}{q-1}}|\mu_{j}|^{\frac{1}{q-1}}, \ \  \  j=1,\cdots, R_{N}.
\end{align*}
This therefore completes the proof of  \eqref{gaform1} for the case where $1<p<\frac{4}{3}$.

$Case\ 2: p=\frac{4}{3}$. In this case,  we suppose that  $\gamma$ is an optimizer of $D_{4/3,N}$  satisfying $\|\gamma\|_{\mathfrak{S}^{\infty}}=\|\gamma\|=1$. Letting $\gamma(t)$ be as in \eqref{gt1},  it then follows that  $\|\gamma(t)\|=\|\gamma\|=1$. Similar to \eqref{2.9} and \eqref{2.8},  one can deduce that there exists $R_{N}\in[1,\,  N] $ such that
$$
\gamma=\sum_{j=1}^{R_{N}}n_{j}|w_{j}\rangle \langle w_{j}|,\ \, \langle w_{j}, w_{k}\rangle=\delta_{jk},
$$
where $0<n_j\le 1$, and  $w_j$ satisfies
\begin{align}\label{2.8a}
\Big(\sqrt{-\Delta}-\frac{4}{3}D_{4/3, N}\rho_{\gamma}^{\frac{1}{3}}\Big)w_{j}=\mu_jw_{j}\ \ \text{in}\ \ \R^3,
\end{align}
for some $\mu_1\leq\mu_2\leq\cdots\le\mu_{R_N}\in \R.$

We next  prove  that $\mu_{j}<0$ and $n_j=\|\gamma\|=1$ hold for $j=1, \cdots,R_N$. Define
\begin{align*}
\gamma_t:=\gamma-t|w_{j}\rangle \langle w_{j}|\  \ \ \text{for}\ \ t\in(0,n_j),\ \, j=1, \cdots,R_N.
\end{align*}
One can calculate from \eqref{2.8a} that for $j=1, \cdots,R_N$,
\begin{align}\label{2.11}
D_{\frac{4}{3}, N}&\le 	\frac{\|\gamma_t\|^{\frac{1}{3}}\tr(\sqrt{-\Delta}\gamma_t)}{\int_{\R^{3}}\rho_{\gamma_t}^{4/3}dx} =\frac{\tr(\sqrt{-\Delta}\gamma_t)}{\int_{\R^{3}}\rho_{\gamma_t}^{4/3}dx}\nonumber\\[1mm]
&=\dfrac{\tr(\sqrt{-\Delta}\gamma)-t\big\langle w_{j},\sqrt{-\Delta}w_{j}\big\rangle }{\int_{\R^{3}}\big(\rho_{\gamma}-t|w_{j}|^{2}\big)^{4/3}dx}\\[1mm]
&=	D_{\frac{4}{3}, N}\dfrac{\inte\rho_\gamma^{4/3}dx-t\mu_{j}D^{-1}_{\frac{4}{3}, N}-\frac{4}{3}t\int_{\R^{3}}\rho^{1/3}_{\gamma}|w_{j}|^{2}dx}{\int_{\R^{3}}\big(\rho_{\gamma}-t|w_{j}|^{2}\big)^{4/3}dx}.\nonumber
\end{align}
Using the strict convexity of the function $t\mapsto t^{\frac{4}{3}}$, we thus obtain from \eqref{2.11} that for $j=1, \cdots,R_N$,
\begin{align*}
\mu_{j}\le -\frac{D_{4/3, N}}{t}\int_{\R^{3}}\Big[\big(\rho_{\gamma}-t|w_{j}|^{2}\big)^{\frac{4}{3}}-\rho_{\gamma}^{\frac{4}{3}}+\frac{4}{3}t\rho_{\gamma}^{\frac{1}{3}}|w_{j}|^{2}\Big]dx<0.
\end{align*}

By contradiction, we now assume that $0<n_{j}<\|\gamma\|=1$ holds for some  $j\in \{1,\cdots, R_{N}\}$.
Define $\gamma_t:=\gamma+t|w_{j}\rangle \langle w_{j}|$  for $t\in(0, 1-n_j)$. The same argument of \eqref{2.11} then yields that
\begin{align*}
D_{\frac{4}{3}, N}&\le \dfrac{\|\gamma_t\|^{\frac{1}{3}}\tr(\sqrt{-\Delta}\gamma_t)}{\int_{\R^{3}}\rho^{4/3}_{\gamma_t}dx}\\
&=D_{\frac{4}{3}, N}\dfrac{\inte\rho_\gamma^{4/3}dx+t\mu_{j}D^{-1}_{\frac{4}{3}, N}+\frac{4}{3}t\int_{\R^{3}}\rho^{1/3}_{\gamma}|w_{j}|^{2}dx}{\int_{\R^{3}}\big(\rho_{\gamma}+t|w_{j}|^{2}\big)^{4/3}dx}\\
&=D_{\frac{4}{3}, N}\dfrac{\inte\rho_\gamma^{4/3}dx+t\mu_{j}D^{-1}_{\frac{4}{3}, N}+\frac{4}{3}t\int_{\R^{3}}\rho^{1/3}_{\gamma}|w_{j}|^{2}dx}{\int_{\R^{3}}\rho_{\gamma}^{4/3}dx+\frac{4}{3}t\int_{\R^{3}}\rho^{1/3}_{\gamma}|w_{j}|^{2}dx+o(t)}\ \, \text{as}\ \ t\searrow0,
\end{align*}
which however contradicts with the fact $\mu_{j}<0$.  This therefore completes the proof of Lemma \ref{lem2.1}.
\qed

Employing Lemma \ref{lem2.1}, in this section we finally address the proof of Proposition \ref{th11}.

\vskip 0.05truein
\noindent \textbf{Proof of Proposition \ref{th11}.} (1). By Lemma \ref{lem2.1}, Proposition \ref{th11} (1) immediately holds true.

(2). Let $\gamma=\sum_{j=1}^{R_{N}}n_{j}|w_{j}\rangle \langle w_{j}|$ given by \eqref{gaform1} be an optimizer of the problem $D_{p,N}$, and suppose that $H^*_\gamma$ and  $\mu_1\leq\mu_2\cdots\leq\mu_{R_N}<0$ are as in \eqref{eq}, where $1<p\leq4/3$ and $N\in\mathbb{N}^+$.  Following Lemma \ref{lem2.1}, in order to complete the proof of Proposition \ref{th11}  (2),  it  suffices to prove that $\mu_1<\mu_2\leq\cdots\leq\mu_{R_N}<0$ are  the $R_{N}$ first eigenvalues (counted with multiplicity) of the operator $H^*_\gamma$ on $L^2(\R^3,\C)$, where the operator $H^*_{\gamma}$ is defined by  (\ref{eq}).

We first claim that $\mu_1\leq\mu_2\leq\cdots\leq\mu_{R_N}<0$ are  the $R_{N}$ first eigenvalues (counted with multiplicity) of the operator $H^*_\gamma$ on $L^2(\R^3,\C)$. On the contrary,  assume that there exist some $j_*\in\{1, \cdots,R_N\}$ and an eigenpair $(\mu, \, w)$  of $H^*_\gamma$ such that $\mu<\mu_{j_*}<0$, $\|w\|_2$=1 and $\langle w,\, w_j\rangle=0$ holds for all $j=1, \cdots,R_N$.
Define
\begin{align*}
\tilde{\gamma}:=
\gamma+n_{j_*}|w\rangle \langle w|-n_{j_*}|w_{j_*}\rangle \langle w_{j_*}|.
\end{align*}
By  the  convexity of the function $t\mapsto t^{p}$ for $1<p\leq4/3$, we then calculate from \eqref{eq} that
\begin{align*}
\int_{\R^{3}}\rho_{\tilde{\gamma}}^{p}dx\ge \inte\rho_\gamma^p dx+pn_{j_*}\int_{\R^{3}}\rho_{\gamma}^{p-1}\big(|w|^{2}-|w_{j_*}|^{2}\big)dx,
\end{align*}
and
\begin{align*}
\tr(\sqrt{-\Delta} \tilde{\gamma})
&=\tr(\sqrt{-\Delta} \gamma)+n_{j_*}\big\langle w, \sqrt{-\Delta}w\big\rangle-n_{j_*}\big\langle w_{j_*}, \sqrt{-\Delta}w_{j_*} \big\rangle\nonumber\\[2mm]
&=\tr(\sqrt{-\Delta} \gamma)+pn_{j_*}C(p,N)\int_{\R^{3}}\rho_{\gamma}^{p-1}(|w|^{2}-|w_{j_*}|^{2})dx+n_{j_*}(\mu-\mu_{j_*})\nonumber\\	 &<\tr(\sqrt{-\Delta} \gamma)+pn_{j_*}C(p,N)\int_{\R^{3}}\rho_{\gamma}^{p-1}\big(|w|^{2}-|w_{j_*}|^{2}\big)dx,
\end{align*}
where $C(p,N)=\frac{D_{p,N}^{3(p-1)}}{3(p-1) \big(\tr(\sqrt{-\Delta}\gamma)\big)^{3p-4}}>0$.
This then gives that
\begin{align*}
D_{p,N}^{3(p-1)}&\le \dfrac{\big(\tr(\sqrt{-\Delta}\tilde{\gamma})\big)^{3(p-1)}}{\int_{\R^{3}}\rho_{\tilde{\gamma}}^{p}dx}\\
&<\dfrac{\Big(\tr(\sqrt{-\Delta} \gamma)+{p}n_{j_*}C(p,N)\int_{\R^{3}}\rho_{\gamma}^{p-1}\big({|w|^{2}-|w_{j_*}|^{2}}\big)dx\Big)^{3(p-1)}}
{\inte\rho_\gamma^p dx+pn_{j_*}\int_{\R^{3}}\rho_{\gamma}^{p-1}\big(|w|^{2}-|w_{j_*}|^{2}\big)dx}\\
&=D_{p,N}^{3(p-1)}\dfrac{\Big(1+pn_{j_*}\frac{C(p,N)}{\tr\sqrt{-\Delta} \gamma}\int_{\R^{3}}\rho_{\gamma}^{p-1}\big({|w|^{2}-|w_{j_*}|^{2}}\big)dx\Big)^{3(p-1)}}{1+3p(p-1) n_{j_*}\frac{C(p,N)}{\tr\sqrt{-\Delta} \gamma}\int_{\R^{3}}\rho_{\gamma}^{p-1}\big(|w|^{2}-|w_{j_*}|^{2}\big)dx}
\le D_{p,N}^{3(p-1)},
\end{align*}
a contradiction.  This proves the claim that $\mu_1\leq\mu_2\leq\cdots\leq\mu_{R_N}<0$ are  the $R_{N}$ first eigenvalues (counted with multiplicity) of the operator $H^*_\gamma$ on $L^2(\R^3,\C)$.

We now prove that  $\mu_1<\mu_2$ holds true.  Recall from (\ref{eq}) that  
\begin{align}\label{a127}
\big(\sqrt{-\Delta}\, -\mu_{j}\big)w_{j}=pC(p,N)\rho_{\gamma}^{p-1}w_{j}\ \, \text{in}\ \ \R^{3},\ \,  j=1, \cdots, R_{N},
\end{align}
where the constant $C(p, N)=\frac{D_{p,N}^{3(p-1)}}{3(p-1) \big(\tr(\sqrt{-\Delta}\gamma)\big)^{3p-4}}>0$. This yields that
\begin{align}\label{119}
w_{j}(x)=pC(p,N)\Big(G_{j}*\big(\rho_{\gamma}^{p-1}w_{j}\big)\Big)(x)\ \  \text{in}\ \ \R^{3},
\end{align}
where $G_{j}(x)$ is the Green's function of the operator $\sqrt{-\Delta}-\mu_{j}$ in $\R^{3}$.
The same argument of \cite[Lemma A.1]{cb3} gives from \eqref{a127}  that $(w_{1}, \cdots, w_{R_{N}})\in \big(L^{r}(\R^{3}, \C)\big)^{R_{N}}$  holds for any  $r\in [2, \infty)$, and hence there exists a constant $r>3$ such that
\begin{align}\label{2.17}
\rho_{\gamma}^{p-1}w_{j}\in L^{r}(\R^{3}, \C), \ \    j=1, \cdots, R_{N}.
\end{align}
Note from \cite[Lemma C.1]{CPAM} that
\begin{align}\label{a137}
G_{j}\in L^{\frac{r}{r-1}}(\R^{3}, \R), \ \    j=1, \cdots, R_{N}.
\end{align}
Applying  \cite[Lemma A.2]{YNTS}, we thus deduce from (\ref{119})--(\ref{a137}) that
\begin{align}\label{120}
w_{j}\in C(\R^{3}, \C)\ \, \text{and}\ \,  \lim\limits_{|x|\to\infty}|w_{j}(x)|= 0, \ \, j=1, \cdots, R_{N}.
\end{align}
Since $\mu_{1}<0$ is the first eigenvalue of the operator  $\sqrt{-\Delta}-pC(p,N)\rho_{\gamma}^{p-1}$ on $L^2(\R^3,\C)$, the same argument of  \cite[Section 11.8]{Analysis} yields from \eqref{a127} and \eqref{120} that  $\mu_1<\mu_2$ holds true,
$w_1$  is unique (up to a constant phase) and can be chosen to be a strictly positive function.
This therefore completes the proof of  Proposition \ref{th11} (1) and (2).

(3). Let $\gamma=\sum_{j=1}^{R_{N}}n_{j}|w_{j}\rangle \langle w_{j}|$  given by \eqref{gaform} be an optimizer of the problem $D_{p,N}$.   Using a similar argument of \cite[Lemma C.2]{CPAM}, we conclude from \eqref{Qeq} and \eqref{120}   that  there exist constants $C>0$ and $\tilde{C}_j>0$ such that
\begin{align}\label{a15}
|w_{j}(x)|\le C\big(1+|x|^{4}\big)^{-1}\  \ \  \text{in}\  \,  \R^{3}, \ \     j=1, \cdots, R_{N},
\end{align}
and
\begin{align}\label{low}
\lim\limits_{|x|\to\infty}|x|^{4}w_{j}(x)=\tilde{C}_j\inte \rho_{\gamma}^{p-1}(y)w_{j}(y)dy, \  \   j=1, \cdots, R_{N}.
\end{align}
Since  $w_1$ can be chosen as a strictly positive function, it yields from \eqref{120} and \eqref{low} that there exists $C_1>0$ such that
\begin{align}\label{a17}
|w_{1}(x)|\geq C_1\big(1+|x|^{4}\big)^{-1}\  \ \,  \text{in}\  \  \R^{3}.
\end{align}

We now claim that
\begin{align}\label{cliam129}
\text{if}\ D_{p, N}\ \text{admits an optimizer}\ \gamma\ \text{of rank}\  N, \ \text{then}\ D_{p, 2N}<D_{p, N} \ \text{holds true}.
\end{align}
Suppose that
$$
\gamma=\sum\limits_{j=1}^{N}n_{j}|w_{j}\rangle \langle w_{j}| \ \text{given by }\eqref{gaform} \ \text{is an optimizer of the problem}\ D_{p,N}.
$$
Then  the orthonormal system $(w_1, \cdots, w_N)$ satisfies the estimates \eqref{a15} and \eqref{a17}.
Set $w_{jR}(x):=w_{j}(x-Re_{1})$  for $R>0$, and define the  Gram matrix  $G_{R}$ as
\begin{equation}\label{3.6b}
\begin{split}
G_R:=\begin{bmatrix}
\mathbb{I}_{N}& E_{R}\\
E_{R}^{*}&\mathbb{I}_{N}
\end{bmatrix}=:
\left(
\begin{split}
&w_1\\[-2mm]
&\ \vdots\\[-2mm]
&w_N\\
&w_{1R}\\[-2mm]
&\ \vdots\\[-2mm]
&w_{NR}
\end{split}
\right)
\big(w_1, \cdots,w_N,w_{1R},\cdots, w_{NR}\big)
,
\end{split}
\end{equation}
where $e_1=(1,0,0)$, $E_{R}=\big(e_{ij}^{R}\big)_{i,j=1}^N$ and $e_{ij}^{R}= \langle w_{i}, w_{j R}\rangle$. Since $\lim\limits_{R\to\infty}|e_{ij}^{R}|=0$,
we obtain that  $G_{R}$ is positive definite for sufficiently large $R>0$.  Set
\begin{align*}
(\tilde{w}_{1,R}, \cdots, \tilde{w}_{2N,R}):=(w_{1}, \cdots, w_{N}, w_{1R}, \cdots, w_{N R})G_{R}^{-\frac{1}{2}},
\end{align*}
and
\begin{align}\label{gar}
\tilde{\gamma}_{R}:=\sum\limits_{i=1}^{N}n_i|\tilde{w}_{i,R}\rangle \langle \tilde{w}_{i,R}|+\sum\limits_{i=1}^{N}n_i|\tilde{w}_{i+N, R}\rangle \langle \tilde{w}_{i+N, R}|.
\end{align}
It then follows that  $(\tilde{w}_{1,R}, \cdots, \tilde{w}_{2N,R})$ is an orthonormal system in $L^2(\R^3,\C)$, and thus
\begin{align*}
\|\tilde{\gamma}_R\|_{\mathfrak{S}^{q}}=
\left\{\begin{array}{lll}
\big(2\sum\limits_{j=1}^{N}n_j^q\big)^{1/q}=2^{1/q}\|\gamma\|_{\mathfrak{S}^{q}}=2^{1/q},\ \ &\text{if}\ \ 1\le q<\infty; \\[2mm]
\|\gamma\|=1,\ \ &\text{if}\ \ q=\infty,
\end{array}\right.
\end{align*}
where $q$ is as in \eqref{q}.

Note that
\begin{align*}
G_{R}^{-\frac{1}{2}}=\begin{bmatrix}
\mathbb{I}_{N}& 0\\
0&\mathbb{I}_{N}
\end{bmatrix}-\frac{1}{2}\begin{bmatrix}
0& E_{R}\\
E_{R}^{*}&0
\end{bmatrix}+O(e^{2}_{R})\ \  \text{as}\ \ R\to\infty,
\end{align*}
where
\begin{align*}
e_{R}:=\max_{i,j}|\langle w_i, w_{jR}\rangle|=o(1)\ \  \text{as}\ \ R\to\infty.
\end{align*}
This yields from \eqref{gar} that
\begin{align}\label{2.15}
\tilde{\gamma}_R
=&\ \gamma+\gamma_R-\frac{1}{2}\sum_{i=1}^{N}\sum_{j=1}^{N}n_i\big(|w_i\rangle\langle \overline{e^R_{ij}}w_{jR}|+|\overline{e^R_{ij}}w_{jR}\rangle\langle w_i|\big)\\
&-\frac{1}{2}\sum_{i=1}^{N}\sum_{j=1}^{N}n_i\big(|e^R_{ji}w_{j}\rangle\langle w_{iR}|+|w_{iR}\rangle\langle e^R_{ji}w_{j}|\big)+O(e_R^2) \ \  \, \mbox{as} \ \ R\to\infty,\nonumber
\end{align}
where   $\gamma_R:=\sum_{j=1}^{N}n_j|w_{jR}\rangle\langle w_{jR}|$. As a consequence of \eqref{Qeq} and \eqref{a15}, one can  calculate from \eqref{2.15} that
\begin{align}\label{2.27}
D_{p,2N}^{3(p-1)}&\le \dfrac{\|\tilde{\gamma}_{R}\|_{\mathfrak{S}^{q}}^{q(4-3p)}\big(\tr(\sqrt{-\Delta}\tilde{\gamma}_{R})\big)^{3(p-1)}}{\int_{\R^{3}}\rho_{\tilde{\gamma}_{R}}^{p}dx}
=\dfrac{2^{4-3p}\big(2\tr (\sqrt{-\Delta}\gamma)+O(e^{2}_{R})\big)^{3(p-1)}}{2\int_{\R^{3}}\rho_{\gamma}^{p}dx+\int_{\R^{3}}\big(\rho_{\tilde{\gamma}_{R}}^{p}-\rho_{\gamma}^{p}-\rho_{\gamma_R}^{p}\big)dx}\nonumber\\
&=D_{p,N}^{3(p-1)} \Big[1-\frac{1}{2\inte\rho^p_\gamma dx}\int_{\R^{3}}\big(\rho_{\tilde{\gamma}_{R}}^{p}-\rho_{\gamma}^{p}-\rho_{\gamma_R}^{p}\big)dx+O(e^{2}_{R})\Big]\nonumber\\
&=D_{p,N}^{3(p-1)} \Big[1-\frac{1}{2\inte\rho^p_\gamma dx}\int_{\R^{3}}\big((\rho_\gamma+\rho_{\gamma_{R}})^{p}-\rho_{\gamma}^{p}-\rho_{\gamma_R}^{p}\big)dx\Big.\\
&\ \, \ \ \ \ \ \ \ \ \ \ \ \ \Big.+O(e^{2}_{R})\Big]\ \ \text{as}\ \ R\to\infty.\nonumber
\end{align}

Since it follows  from \eqref{a17} that 
\begin{align*}
\rho_{\gamma}(x)=\sum_{i=1}^{N}|w_{i}(x)|^{2}\ge |w_{1}(x)|^{2}\ge C^2_{1}(1+|x|^{4})^{-2}\ge C^2_{1}(1+16R^{4})^{-2}\ \, \text{in}\ B_R,
\end{align*}
and
\begin{align*}
\rho_{\tilde{\gamma}_{R}}(x)=\sum_{i=1}^{N}|w_{i}(x-Re_1)|^{2}\ge C^2_{1}(1+|x-Re_{1}|^{4})^{-2}\ge C^2_{1}(1+16R^{4})^{-2}\ \, \text{in}\ B_R,
\end{align*}
one gets that for sufficiently large $R>0$,
\begin{equation}\label{2.28}
\begin{split}
&\int_{\R^{3}}\Big((\rho_{\gamma}+\rho_{\tilde{\gamma}_{R}})^{p}-\rho_{\gamma}^{p}-\rho_{\tilde{\gamma}_{R}}^{p}\Big)dx\\
\ge&\frac{4\pi R^3}{3} C_{1}^{2p}(2^{p}-2)(1+16R^{4})^{-2p}\geq \tilde{C}_1R^{-8p+3},
\end{split}
\end{equation}
where $\tilde{C}_1>0$ is independent of $R>0$, and we have used the fact that the function $(x,y)\to (x+y)^{p}-x^{p}-y^{p}$ increases in $x$ and $y$ separately for $1<p\le \frac{4}{3}$.
Moreover, it can be verified from \eqref{a15} that
\begin{align*}
e_{R}&\leq\max_{i,j}\int_{\R^{3}}|w_{i}(x)||w_{j}(x-Re_{1})|dx\\
&\le C^2\int_{\R^{3}}\frac{1}{(1+|x|^{4})(1+|x-Re_{1}|^{4})}dx\\
&\leq C^2\big(1+\frac{R^{4}}{16}\big)^{-1}\int_{B_{R/2}}\frac{1}{(1+|x|^{4})} dx+C^2\big(1+\frac{R^{4}}{16}\big)^{-1}\int_{B^c_{R/2}} \frac{1}{(1+|x-Re_{1}|^{4})}dx\\
&\le C_2R^{-4},
\end{align*}
where $C_2>0$ is independent of $R>0$. Together with \eqref{2.28}, we thus deduce from \eqref{2.27} that  for sufficiently large $R>0$,
\begin{align*}
D_{p, 2N}^{3(p-1)}&\leq D_{p,N}^{3(p-1)} \Big(1-C'_1R^{-8p+3}+C'_2R^{-8}\Big)<D_{p, 2N}^{3(p-1)},
\end{align*}
due to the assumption $1<p\leq 4/3$. This proves the claim (\ref{cliam129}).

Since any optimizer of  $D_{p, 1}$ is of rank $1$, we deduce  from (\ref{cliam129}) that $D_{p,2}<D_{p,1}$ holds true, which  further implies that
\begin{align*}
\text{any optimizer of} \ D_{p, 2}\ \text{is of rank}\ 2.
\end{align*}
Together with \eqref{cliam129},  we hence  obtain that $D_{p,4}<D_{p,2}$ holds true.  If $D_{p, 3}<D_{p, 2}$, then we take $N_{3}=3$,  otherwise we take $N_{3}=4$.  One can check that
\begin{align*}
\text{any optimizer of} \ D_{p, N_3}\ \text{is of rank}\ N_3.
\end{align*}
Consequently, we can prove by induction that there exists an infinite sequence of integers $N_{1}=1<N_{2}=2<N_{3}<\cdots$ such that any optimizer of $D_{p, N_k}$ is of rank $ N_k$. This completes  the proof of Proposition \ref{th11}.  \qed

\section{Existence and Non-existence of Minimizers}
In this section, we complete the  proof of Theorem \ref{th21} on the existence and nonexistence of minimizers for the problem  $E_{a}(2)$ defined by (\ref{pro11}).

We first introduce the following minimization problem:
\begin{align}\label{prolamdf}
E_{a}(\lam):=\inf\limits_{\gamma\in\mathcal{P}_{\lam}}\mathcal{E}_{a}(\gamma),\ \, a>0,\ \lambda>0,
\end{align}
where  the energy functional $\mathcal{E}_{a}(\gamma)$ is as in  (\ref{EVdf}),
\begin{align}\label{pn}
\mathcal{P}_{\lam}:=\Big\{\gamma:\,  \gamma=&\sum_{i=1}^{N}|u_{i}\rangle \langle u_{i}|+(\lam-N)|u_{N}\rangle \langle u_{N}| :\  u_{i}\in H^{\frac{1}{2}}(\R^{3}, \C),\nonumber\\  &\langle u_{i},u_{j}\rangle=\delta_{ij}, \  i,j=1,\cdot\cdot\cdot, N
\Big\},
\end{align}
and $N\in\mathbb{N}^+$ is the smallest integer such that $0<\lam\le N$.
The following lemma provides some analytical properties of the energy $E_{a}(\lambda)$.

\begin{lem}\label{lem32}
For any fixed $N\in \mathbb{N}^{+}$, let $D_{4/3, N}\in (0, \infty)$  be  defined by  $(\ref{LTineq})$.  Then the problem $E_{a}(\lam)$ defined in $(\ref{prolamdf})$ satisfies the following properties:
\begin{enumerate}
\item [(1)] If $0< a \le D_{4/3, N}$, then $-\infty<E_{a}(\lam)< 0$ holds for any $\lam\in (0,N]$.  In particular, we have $E_{D_{4/3, 2}}(2)=-2m$,  and $E_{D_{4/3, 2}}(2)$ has no minimizers.
\item  [(2)]  If $0< a \le D_{4/3, N}$, then
\begin{equation*}\label{subad1}
E_{a}(\lam)\le E_{a}(\lam_{1}) +E_{a}(\lam-\lam_{1}),\ \, \forall\ 0<\lam_{1}<\lam\le N.
\end{equation*}


\item [(3)]  If $a> D_{4/3, N}$, then $E_{a}(N)=-\infty$.
\end{enumerate}
\end{lem}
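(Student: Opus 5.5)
The plan is to compare $\sqrt{-\Delta+m^2}-m$ with $\sqrt{-\Delta}$ from both sides and then use the dual Lieb--Thirring inequality (\ref{LTineq}) at $p=4/3$. From these we get both the lower bounds and, through dilations of optimizers of $D_{4/3,N}$ supplied by Proposition \ref{th11}, the upper bounds.

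\emph{Lower bound in (1).} We use $\sqrt{|\xi|^2+m^2}-m\ge |\xi|-m$. Every $\gamma\in\mathcal{P}_\lambda$ satisfies $\|\gamma\|\le 1$, $\gamma$ has rank at most $N$, and $\tr\gamma=\lambda$. Hence (\ref{LTineq}) gives
\[
\mathcal{E}_a(\gamma)\ge \Big(1-\frac{a}{D_{4/3,N}}\Big)\tr(\sqrt{-\Delta}\gamma)-m\lambda\ge -m\lambda>-\infty
\]
whenever $a\le D_{4/3,N}$.

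\emph{Negativity in (1).} Take any $\gamma\in\mathcal{P}_\lambda$ and its dilation $\gamma_\ell$ with kernel $\ell^{-3}\gamma(x/\ell,y/\ell)$. For large $\ell$, the bound $\sqrt{|\xi|^2+m^2}-m\le |\xi|^2/(2m)$ gives kinetic energy $O(\ell^{-2})$, while the interaction term is $-a\ell^{-1}\int\rho_\gamma^{4/3}$. The total is therefore negative.

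\emph{The case $a=D_{4/3,2}$, $\lambda=2$.} Take an optimizer $\gamma^*$ of $D_{4/3,2}$ with $\|\gamma^*\|=1$. By Proposition \ref{th11}(3) it has rank $2$, and by Proposition \ref{th11}(2) it is a projection with $n_j=1$, so $\gamma^*\in\mathcal{P}_2$. Dilate in the concentrating direction, $\ell\to0$. Since $\sqrt{|\xi|^2+m^2}-|\xi|\le m^2/(2|\xi|)$, we obtain
\[
\mathcal{E}_{D}(\gamma^*_\ell)\le \ell^{-1}\Big(\tr\sqrt{-\Delta}\gamma^*-D\int\rho_{\gamma^*}^{4/3}\Big)+\frac{m^2\ell}{2}\tr\frac{\gamma^*}{\sqrt{-\Delta}}-2m=-2m+O(\ell).
\]
We need $\tr(\gamma^*/\sqrt{-\Delta})<\infty$, which should follow from $w_j\in L^1$, i.e.\ from the decay (\ref{a15}), so that $\hat w_j$ is bounded. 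Combined with the lower bound this gives $E_D(2)=-2m$.

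\emph{No minimizer at $a=D_{4/3,2}$.} Suppose $\gamma$ attains $-2m$. Since $\sqrt{|\xi|^2+m^2}-m>|\xi|-m$ strictly for $\xi\neq0$, we get $\tr(\sqrt{-\Delta+m^2}-m)\gamma>\tr(\sqrt{-\Delta}-m)\gamma$ for $\gamma\ne0$. Hence $\mathcal{E}(\gamma)>(1-D/D)\tr\sqrt{-\Delta}\gamma-2m\ge-2m$, a contradiction.

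\emph{Subadditivity (2).} This is the standard translation argument. Take near-minimizers $\gamma_1\in\mathcal{P}_{\lambda_1}$ and $\gamma_2\in\mathcal{P}_{\lambda-\lambda_1}$, truncated to compact support in Fourier-cutoff-compatible form; it suffices to approximate by $C_0^\infty$ orbitals. Translate $\gamma_2$ far away, so that the orbitals become orthogonal up to errors that vanish, and orthonormalize via the Gram matrix as in the proof of Proposition \ref{th11}(3). The main technical obstacle is the bookkeeping of occupation numbers. The sum $\gamma_1+\gamma_2(\cdot-R)$ has two fractional occupations, $\lambda_1-N_1$ and $\lambda-\lambda_1-N_2$, whereas $\mathcal{P}_\lambda$ allows only one fractional occupation. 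I would deal with this by first noting that the infimum over $\mathcal{P}_\lambda$ equals the infimum over operators $0\le\gamma\le1$ with $\tr\gamma=\lambda$ and rank $\le N$, where $N=\lceil\lambda\rceil$. The proof goes by concavity in the occupation numbers: for fixed orbitals, $\mathcal{E}_a$ is concave in $(n_j)$ because $-\rho^{4/3}$ is concave. The minimum over the polytope $\{0\le n_j\le1,\sum n_j=\lambda\}$ is therefore attained at a vertex, and a vertex has at most one fractional entry. The rank count works because $\lceil\lambda_1\rceil+\lceil\lambda-\lambda_1\rceil\le N+1$, and at a vertex one of the entries drops to zero. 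The nonlocal kinetic cross terms between far-apart pieces vanish as $R\to\infty$, since the orbitals are in $H^{1/2}$ and the overlaps go to zero.

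\emph{(3).} If $a>D_{4/3,N}$, take an optimizer of $D_{4/3,N}$ with norm $1$. When its rank is less than $N$, complete it by adding far-away orthonormal functions with small energy. Dilate with $\ell\to0$. The leading term is $\ell^{-1}(D-a)\int\rho^{4/3}\to-\infty$, and the remaining term $m^2\ell\,\tr(\gamma/\sqrt{-\Delta})$ is bounded, so $E_a(N)=-\infty$.
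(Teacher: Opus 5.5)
Your proposal is correct and follows essentially the paper's proof. It uses the same lower bound from \eqref{LTineq}, the same spreading and concentrating dilations, the same strict-inequality nonexistence argument, and the same translation-plus-Gram construction for (2). Two differences are worth noting. First, at $a=D_{4/3,2}$ you control $\sqrt{|\xi|^2+m^2}-|\xi|\le m^2/(2|\xi|)$ using $\tr(\gamma^*/\sqrt{-\Delta})<\infty$ from \eqref{a15}, whereas the paper simply uses dominated convergence. Second, your concavity/vertex argument on the occupation numbers is exactly what the paper obtains by passing to the relaxed class $\mathcal{K}_\lambda$. In (3), completing the rank directly works provided the added orbitals' $\sqrt{-\Delta}$-energy is chosen below $(a-D_{4/3,N})\int\rho_\gamma^{4/3}$, since it also scales like $\ell^{-1}$. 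It also avoids the paper's step $E_a(N)\le E_a(R_N)$, whose monotonicity was only derived for $a\le D_{4/3,N}$.
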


\noindent \textbf{Proof.} (1). Let  $N\in \mathbb{N}^{+}$ be fixed, and take $\lam\in (0,N]$. By the definition of $D_{4/3, N}$, one can verify that  for any  $\gamma\in \mathcal{P}_\lambda$,
\begin{align}\label{lowerbd}
\mathcal{E}_{a}(\gamma)&=\tr(\sqrt{-\Delta+m^{2}}-m)\gamma-a\int_{\R^{3}}\rho_{\gamma}^{\frac{4}{3}}dx\nonumber\\
&\ge \tr\big(\sqrt{-\Delta}\gamma\big) -m\lam-a\int_{\R^{3}}\rho_{\gamma}^{\frac{4}{3}}dx\\
&\ge(1-\frac{a}{D_{4/3, N}})\tr\big(\sqrt{-\Delta}\gamma\big) -m\lam\ge -m\lam,\ \, \forall\ 0<a\leq D_{4/3, N}.\nonumber
\end{align}
This yields that if $0< a \le D_{4/3, N}$, then $E_{a}(\lam)$ is finite for any $\lam\in (0,N]$. Let $\gamma\in\mathcal{P}_\lambda$
and define $\gamma_t(x,y):=t^3\gamma(tx,ty)$ for $t>0$.
By  the  operator inequality \begin{equation*}\label{3.8c}
\sqrt{-\Delta+m^2}-m\leq \frac{-\Delta}{2m}, \ \, \forall\ m>0,
\end{equation*}
we obtain that
\begin{align*}
\mathcal{E}_{a}(\gamma_{t})&\le \frac{1}{2m}\tr\big(-\Delta\gamma_{t}\big)-a\int_{\R^{3}}\rho_{\gamma_{t}}^{\frac{4}{3}}dx \\ &=\frac{t^2}{2m}\tr\big(-\Delta\gamma\big)-at\int_{\R^{3}}\rho_{\gamma}^{\frac{4}{3}}dx\\
&<0,\ \, \text{if}\ t>0\ \text{is sufficiently small},
\end{align*}
which thus implies that $E_{a}(\lam)< 0$ holds for any $a>0$.

By Proposition \ref{th11} (3), we can choose $\gamma^{(2)}=\sum_{i=1}^2|w_i\rangle\langle w_i|\in \mathcal{P}_2$ as an optimizer of $D_{4/3, 2}$.  Denoting $\gamma_{t}^{(2)}(x, y):=t^3\gamma^{(2)}(tx, ty)$,  the dominated convergence theorem yields that
\begin{align}\label{3.5}
-2m &\le E_{D_{4/3, 2}}(2)\le \lim\limits_{t\to \infty}\mathcal{E}_{D_{4/3, 2}}(\gamma_{t}^{(2)})\nonumber\\
&=\lim\limits_{t\to \infty}\Big[\mathcal{E}_{D_{4/3, 2}}(\gamma_{t}^{(2)})-\tr\big(\sqrt{-\Delta}\gamma_{t}^{(2)}\big)+D_{4/3, 2}\int_{\R^{3}}\rho_{\gamma_{t}^{(2)}}^{\frac{4}{3}}dx\Big]\nonumber\\
&=-2m+\lim\limits_{t\to \infty}\Big[\tr\big(\sqrt{-\Delta+m^{2}}-\sqrt{-\Delta}\, \big)\gamma_{t}^{(2)}\Big] \\
&=-2m+\lim\limits_{t\to \infty}\sum_{i=1}^{2}\int_{\R^{3}}\big(\sqrt{t^{2}|\xi|^{2}+m^{2}}-\sqrt{t^{2}|\xi|^{2}}\, \big)|\hat{w}_{i}|^{2}d\xi\nonumber\\
&=-2m+\lim\limits_{t\to \infty}\sum_{i=1}^{2}\int_{\R^{3}}\frac{m^2}{\sqrt{t^{2}|\xi|^{2}+m^{2}}+\sqrt{t^{2}|\xi|^{2}}}|\hat{w}_{i}|^{2}d\xi\nonumber\\
&=-2m.\nonumber
\end{align}
Therefore, if $E_{D_{4/3, 2}}(2)$ has a minimizer $\gamma$, then the definition of $D_{4/3, 2}$ gives from \eqref{3.5} that
\begin{align*}
\tr\big(\sqrt{-\Delta}\gamma\big) < \tr\big(\sqrt{-\Delta+m^{2}}\,  \gamma\big) =D_{4/3, 2}\int_{\R^{3}}\rho_{\gamma}^{\frac{4}{3}}dx\le \tr\big(\sqrt{-\Delta} \gamma\big),
\end{align*}
a contradiction. This proves that $E_{D_{4/3, 2}}(2)$ has no minimizers.

(2). For any given $0< \lam_{1}<\lam\le N$, set
\begin{align}\label{ga1}
\gamma_{1}=\sum_{i=1}^{N_{1}}|\varphi_{i}\rangle \langle \varphi_{i}|+(\lam_{1}-N_{1})|\varphi_{N_{1}}\rangle \langle \varphi_{N_{1}}|\in \mathcal{P}_{\lam_{1}},
\end{align}
and
\begin{align*}\label{ga2}
\gamma_{2}=\sum_{j=1}^{N_{2}}|\psi_{j}\rangle \langle \psi_{j}|+(\lam-\lam_{1}-N_{2})|\psi_{N_{2}}\rangle \langle \psi_{N_{2}}|\in \mathcal{P}_{\lam-\lam_{1}},
\end{align*}
where $N_{1}, N_{2}\in \mathbb{N}^{+}$ are the smallest integers such that $\lam_{1}\le N_{1}$ and $\lam-\lam_{1}\le N_{2}$, respectively.   Define
\begin{equation*}
\mathcal{K}_{\lam}:=\big\{ \gamma{'}\in\mathcal{B}\big(L^{2}(\R^{3}, \C)\big): \ 0\le \gamma{'}=(\gamma{'})^{*}\le 1,\  \mathrm{Tr}(\gamma)=\lambda,\  \mathrm{Tr}(\sqrt{-\Delta}\gamma)<\infty\big\},
\end{equation*}
where $\mathcal{B}\big(L^{2}(\R^{3}, \C)\big)$ denotes the set of bounded linear operators on $L^{2}(\R^{3}, \C)$.  The same argument of \cite[Lemma 11]{Arma} then gives that
\begin{align}\label{equver}
E_{a}(\lam)=\inf\limits_{\gamma{'}\in \mathcal{K}_{\lam}} \mathcal{E}_{a}(\gamma{'}).
\end{align}

Denote  $\psi_{j}^{\tau}(x):=\psi_{j}(x-\tau e_{1})$ for $\tau>0$,
where  $e_{1}=(1,0,0)$ and $j=1, \cdots, N_2$. Similar to (\ref{2.15}),  using the system $(\varphi_{1}, \cdots, \varphi_{N_1},\psi_{1}^{\tau}, \cdots, \psi_{N_2}^{\tau})$,  we can define an operator $\gamma_\tau\in\mathcal{K}_\lambda$ as follows:
\begin{align}\label{gatau1}
\gamma_{\tau}=&\, \gamma_{1}+\gamma_{2}^{\tau}-\sum_{i=1}^{N_{1}}\sum_{j=1}^{N_{2}}(|\varphi_{i}\rangle \langle \overline{e_{ij}^{\tau}}\psi_{j}^{\tau}|+|\overline{e_{ij}^{\tau}}\psi_{j}^{\tau}\rangle \langle \varphi_{i}|)\nonumber\\
&\quad -\frac{1}{2}(\lam_{1}-N_{1})\sum_{j=1}^{N_{2}}\big(|\varphi_{N_{1}}\rangle \langle \overline{e^{\tau}_{N_{1}j}}\psi_{j}^{\tau}|+|\overline{e^{\tau}_{N_{1}j}}\psi_{j}^{\tau}\rangle \langle \varphi_{N_{1}}|\big)\\
&\quad -\frac{1}{2}(\lam-\lam_{1}-N_{2})\sum_{i=1}^{N_{1}}\big(|\varphi_{i}\rangle \langle \overline{e^{\tau}_{iN_{2}}}\psi_{N_{2}}^{\tau}|+|\overline{e^{\tau}_{iN_{2}}}\psi_{N_{2}}^{\tau}\rangle \langle \varphi_{i}|\big)\nonumber\\
&\quad+O(e_{\tau}^{2})\ \, \text{as}\ \ \tau\to \infty,\nonumber
\end{align}
where $e^{\tau}_{ij}:=\langle\varphi_{i}, \psi_{j}^{\tau}\rangle,\ e_{\tau}:=\max\limits_{i,j}|\langle\varphi_{i}, \psi_{j}^{\tau}\rangle|=o(1)$ as $\tau\to \infty$, $\gamma_{1}$ is as in (\ref{ga1}) and
\begin{align}\label{c317}
\gamma_{2}^{\tau}=\sum_{j=1}^{N_{2}}|\psi^{\tau}_{j}\rangle \langle \psi^{\tau}_{j}|+(\lam-\lam_{1}-N_{2})|\psi^{\tau}_{N_{2}}\rangle \langle \psi^{\tau}_{N_{2}}|.
\end{align}
One can check from (\ref{gatau1}) and (\ref{c317}) that
\begin{align}\label{trgatau}
&\quad\tr(\sqrt{-\Delta+m^{2}}-m)\gamma_{\tau}\nonumber\\
&=\tr(\sqrt{-\Delta+m^{2}}-m)\gamma_{1}+\tr(\sqrt{-\Delta+m^{2}}-m)\gamma_{2}+o(1)\ \ \text{as}\ \ \tau\to \infty,
\end{align}
and
\begin{align}\label{rhoL1}
\int_{\R^{3}}|\rho_{\gamma_{\tau}}-\rho_{\gamma_{1}}-\rho_{\gamma_{2}^\tau}|dx=o(1)\ \ \text{as}\ \ \tau\to \infty,
\end{align}
where $\rho_{\gamma_{2}^{\tau}}(x)=\rho_{\gamma_{2}}(x-\tau e_{1})$.  Applying the following inequality (cf. \cite[Lemma 2.1]{Duke})
\begin{equation*}\label{sqineq}
\big(\sqrt{\rho_{\gamma}},\,  \sqrt{-\Delta}\sqrt{\rho_{\gamma}}\big)\le \tr(\sqrt{-\Delta}\gamma),
\end{equation*}
we then  deduce from (\ref{trgatau}) that $\{\sqrt{\rho_{\gamma_{\tau}}}\}$ is bounded uniformly in $H^{\frac{1}{2}}(\R^{3})$, and thus
$\{\rho_{\gamma_{\tau}}\}$ is bounded uniformly in $L^{r}(\R^{3},\C)$ for  any $1\le r\le \frac{3}{2}$.
By the interpolation inequality, we thus conclude from (\ref{rhoL1}) that
\begin{align*}
\rho_{\gamma_{\tau}}(x)-\rho_{\gamma_{1}}(x)-\rho_{\gamma_{2}}(x-\tau e_{1})\to 0 \ \ \text{strongly in}\ \ L^{r}(\R^{3})\ \  \text{as}\ \ \tau \to \infty,\ \ \forall\ r\in [1, 3/2) ,
\end{align*}
which further implies that
\begin{align}\label{rhogatauL}
\lim\limits_{\tau \to \infty}\int_{\R^{3}}\rho_{\gamma_{\tau}}^{\frac{4}{3}}dx&=\lim\limits_{\tau\to \infty}\int_{\R^{3}}\Big(\rho_{\gamma_{1}}(x)+\rho_{\gamma_{2}}(x-\tau e_{1})\Big)^{\frac{4}{3}}dx\nonumber\\
&=\int_{\R^{3}}\Big(\rho_{\gamma_{1}}^{\frac{4}{3}}+\rho_{\gamma_{2}}^{\frac{4}{3}}\Big)dx.
\end{align}

As a consequence of \eqref{equver} and (\ref{trgatau}), we obtain from (\ref{rhogatauL}) that
\begin{align}\label{3.14}
E_{a}(\lam)&=\inf\limits_{\gamma{'}\in \mathcal{K}_{\lam}} \mathcal{E}_{a}(\gamma{'})\le \lim\limits_{\tau\to \infty}\mathcal{E}_{a}(\gamma_{\tau})=\mathcal{E}_{a}(\gamma_{1})+\mathcal{E}_{a}(\gamma_{2}).
\end{align}
Since $\gamma_{1}\in\mathcal{P}_{\lambda_1}$ and $\gamma_{2}\in\mathcal{P}_{\lambda-\lambda_1}$ are arbitrary, 
we  get from \eqref{3.14} that
\begin{equation*}\label{subad}
E_{a}(\lam)\le E_{a}(\lam_{1}) +E_{a}(\lam-\lam_{1}), \ \, \forall\ 0< \lam_{1}<\lam\le N,
\end{equation*}
which  therefore completes the proof of Lemma  \ref{lem32} (2).


(3).  Let $0< a \le D_{4/3,N}$. By Lemma  \ref{lem32} (1) and (2),  we have
$$E_{a}(\lam)\le E_{a}(\lam_{1})+E_{a}(\lambda-\lam_{1})<E_{a}(\lam_{1}),\ \, \forall\ 0< \lam_{1}<\lam\le N.
$$
This shows that if $0< a \le D_{4/3, N}$, then $E_{a}(\lam)$ decreases strictly  in $\lam\in (0,N]$. Following Proposition \ref{th11}, we can take $\gamma=\sum_{i=1}^{R_{N}}|w_{i}\rangle \langle w_{i}|\in\mathcal{P}_{R_N}$ as an optimizer of $D_{4/3, N}$, where $N\geq R_{N}\in\mathbb{N}^+$. Set $\gamma_t(x,y):=t^3\gamma(tx,ty)$ for $t>0$. We thus calculate that for $ a>D_{4/3,N},$
\begin{align*}
E_{a}(N)&\le E_{a}(R_{N})\le \lim\limits_{t\to \infty}\mathcal{E}_{a}(\gamma_{t})\nonumber\\
&= \lim\limits_{t\to \infty} t\Big(\tr\big(\sqrt{-\Delta}\gamma\big)-a\int_{\R^{3}}\rho_{\gamma}^{\frac{4}{3}}dx\Big)\\
&=\lim\limits_{t\to \infty} t\Big(1-\frac{a}{D_{4/3,N}}\Big)\tr\big(\sqrt{-\Delta}\gamma\big)
=-\infty,
\end{align*}
which then completes the proof of Lemma \ref{lem32}.\qed

\vspace{.05cm}

Employing Lemma \ref{lem32}, we now analyze the following fundamental properties of minimizers for  $E_{a}(\lam)$.

\begin{lem}\label{prop34}
For any fixed $N\in\mathbb{N}^+$, let $E_{a}(\lam)$ be defined by $(\ref{prolamdf})$, where $a\in(0, D_{4/3, N}]$ and  $\lam\in(0, N]$. Suppose $\gamma\in\mathcal{P}_\lambda$ is a minimizer of $E_{a}(\lam)$. Then we have
\begin{enumerate}
\item [(1)] The minimizer $\gamma$ can be written as
\begin{align*}
\gamma=\sum\limits_{i=1}^{N'-1}|u_{i}\rangle \langle u_{i}|+(\lam-N'+1)|u_{N'}\rangle \langle u_{N'}|, \ \  \langle u_{i}, u_{j}\rangle=\delta_{ij},
\end{align*}
where $N'$ is the smallest integer such that $\lam\le N'$, and $(u_{1}, \cdots, u_{N'})$ satisfies
\begin{equation*}\label{uieqV}
H_{\gamma}u_{i}:=\big(\sqrt{-\Delta+m^{2}}-m-\frac{4}{3}a\rho_{\gamma}^{\frac{1}{3}}\big)u_{i}=\mu_{i}u_{i}\ \ \text{in} \ \ \R^{3}, \ \  i=1, \cdots, N'.
\end{equation*}
Here $\mu_{1}<\mu_{2}\le \cdots\le \mu_{N'}<0$ are the $N'$ first eigenvalues, counted with multiplicity, of the operator $H_{\gamma}$ on  $L^2(\R^{3},\C)$.

\item [(2)]  We have $(u_{1}, \cdots, u_{N'})\in \big(C(\R^{3}, \C)\big)^{N'}$. Moreover,
there exist positive constants $C_{-}>0$, $C_{0}>0$ and $C_{+}>0$, depending only on $m>0$ and $\mu_{1}<0$, such that 
\begin{equation*}\label{b332}
u_{1}(x)=\begin{cases}
\big(1+o(1)\big)C_{-}|x|^{-\frac{5}{2}}e^{-\theta_{1}|x|} &\text{as} \ \ |x|\to \infty, \ \ \text{if}\ \ m+\mu_{1}<0;\\
\big(1+o(1)\big)C_{0}|x|^{-\frac{3}{2}}e^{-\theta_{1}|x|} &\text{as} \ \ |x|\to \infty, \ \ \text{if}\ \ m+\mu_{1}=0;\\
\big(1+o(1)\big)C_{+}|x|^{-1}e^{-\theta_{1}|x|} &\text{as} \ \ |x|\to \infty, \ \ \text{if}\ \ m+\mu_{1}>0,
\end{cases}
\end{equation*}
where
\begin{equation*}\label{thetaidef}
0<\theta_{1}:=\begin{cases}
\sqrt{m^{2}-(m+\mu_{1})^{2}}, &\text{if} \ \ m+\mu_{1}>0;\\
m, &\text{if} \ \ m+\mu_{1}\le 0.
\end{cases}
\end{equation*}
\end{enumerate}
\end{lem}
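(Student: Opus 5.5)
The plan is to follow the variational scheme of Lemma \ref{lem2.1} and Proposition \ref{th11}~(2), with $\sqrt{-\Delta}$ replaced by $\sqrt{-\Delta+m^2}-m$, and to work in the convex relaxation \eqref{equver}, i.e.\ over $\mathcal{K}_\lam$.

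\textbf{Step 1: Euler--Lagrange equation and commutation.} Since $E_a(\lam)=\inf_{\mathcal{K}_\lam}\mathcal{E}_a$, a minimizer $\gamma\in\mathcal{P}_\lam$ also minimizes over $\mathcal{K}_\lam$. Take the unitary perturbations $\gamma(t)=e^{itA}\gamma e^{-itA}$ with $A=|\varphi\rangle\langle\varphi|$, exactly as in \eqref{gt1}--\eqref{a121}. These preserve $\mathcal{P}_\lam$ and yield $[\gamma,H_\gamma]=0$. Hence $\gamma=\sum_j n_j|u_j\rangle\langle u_j|$ with $H_\gamma u_j=\mu_ju_j$, where the occupation numbers $n_j$ are $1,\dots,1,\lam-N'+1$.

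\textbf{Step 2: Aufbau principle and negativity.} Since $\mathcal{E}_a$ is concave in $\rho$ (the term $-a\int\rho^{4/3}$) and linear in the kinetic part, the first-order condition for minimizing over the convex set $\mathcal{K}_\lam$ reads
\begin{equation*}
\tr(H_\gamma\gamma')\ge\tr(H_\gamma\gamma)\quad\text{for all }\gamma'\in\mathcal{K}_\lam .
\end{equation*}
Thus $\gamma$ minimizes $\gamma'\mapsto\tr(H_\gamma\gamma')$ over $\mathcal{K}_\lam$. Equivalently, I will use the swap argument of Proposition \ref{th11}~(2): replace $u_{j_*}$ by a lower eigenfunction $w$ orthogonal to all the $u_j$. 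Using convexity of $t^{4/3}$, this strictly lowers the energy, a contradiction. Hence the occupied $u_j$ correspond to the $N'$ lowest eigenvalues.

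Negativity comes from the strict monotonicity in Lemma \ref{lem32}~(3). Removing $t|u_j\rangle\langle u_j|$ from $\gamma$ and expanding, as in \eqref{2.11}, gives
\begin{equation*}
E_a(\lam-t)\le E_a(\lam)-t\mu_j-a\int\Big[(\rho-t|u_j|^2)^{4/3}-\rho^{4/3}+\tfrac43 t\rho^{1/3}|u_j|^2\Big].
\end{equation*}
The bracket is strictly positive, and $E_a(\lam-t)>E_a(\lam)$, so $\mu_j<0$.

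\textbf{Step 3: Simplicity of $\mu_1$.} The strict inequality $\mu_1<\mu_2$ and the positivity of $u_1$ follow from the positivity-improving property of $e^{-t\sqrt{-\Delta+m^2}}$, since the kernel is strictly positive. I will argue as in \cite[Section 11.8]{Analysis}, after first obtaining continuity.

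\textbf{Step 4: Regularity and decay (part (2)).} This is the main obstacle. The plan is:
\begin{itemize}
\item Bootstrap $u_i\in L^r$ for all $r\in[2,\infty)$ via \cite[Lemma A.1]{cb3}.
\item Write $u_i=\tfrac{4a}{3}G_{\mu_i}*(\rho_\gamma^{1/3}u_i)$, where $G_{\mu}$ is the Green's function of $\sqrt{-\Delta+m^2}-m-\mu$.
\item Conclude continuity and vanishing at infinity as in \eqref{120}.
\end{itemize}

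For the precise asymptotics, I will use the explicit form of $G_\mu$. Its Fourier symbol $(\sqrt{|\xi|^2+m^2}-m-\mu)^{-1}$ has a pole at $|\xi|^2=(m+\mu)^2-m^2$ when $m+\mu>0$. This pole gives decay $|x|^{-1}e^{-\theta_1|x|}$ with $\theta_1=\sqrt{m^2-(m+\mu_1)^2}$. When $m+\mu\le0$ there is no pole, and the decay is governed by the branch cut at $|\xi|=im$. This gives $K_2$-type behaviour: $|x|^{-5/2}e^{-m|x|}$ generically, and $|x|^{-3/2}e^{-m|x|}$ in the borderline case $m+\mu=0$.

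Since $u_1>0$ and $\rho_\gamma^{1/3}u_1$ decays faster than $G_{\mu_1}$, I will show that
\begin{equation*}
u_1(x)\sim \tfrac{4a}{3}\Big(\int\rho_\gamma^{1/3}u_1\Big)G_{\mu_1}(x).
\end{equation*}
This requires an a priori exponential bound on $u_1$, obtained by comparison and a maximum principle for the nonlocal operator, following \cite[Lemma C.2]{CPAM}, so that convolution tails can be controlled. The constants $C_-,C_0,C_+$ then come from the Green's function asymptotics, which depend only on $m$ and $\mu_1$, multiplied by the positive constant $\tfrac{4a}{3}\int\rho_\gamma^{1/3}u_1$.
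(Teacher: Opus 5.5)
You have a genuine gap in the last step of part (2). Steps 1--3 are sound and mirror the arguments of Section~\ref{lt}:
unitary perturbations give $[\gamma,H_\gamma]=0$; the swap argument uses convexity of $t\mapsto t^{4/3}$; the expansion of \eqref{2.11} is combined with the relaxation \eqref{equver} and strict monotonicity of $\lam\mapsto E_a(\lam)$; and a Perron--Frobenius argument gives $\mu_1<\mu_2$. The skeleton of Step 4 (Green's function representation, exponential decay, faster decay of $f:=\rho_\gamma^{1/3}u_1$, then convolution asymptotics) is the route the paper defers to. The gap comes in two places.

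\textbf{Decay of $f$.} An exponential bound on $u_1$ alone does not show that $f$ decays faster than $G_{\mu_1}$, because $\rho_\gamma^{1/3}$ contains the most slowly decaying orbital $u_{N'}$. You need $|u_i(x)|\le C_\epsilon e^{-(\theta_i-\epsilon)|x|}$ for every $i\le N'$, where $\theta_i$ is defined from $\mu_i$ as $\theta_1$ is from $\mu_1$, so $\theta_{N'}\le\theta_1$. Then take $\epsilon$ so small that $\tfrac23(\theta_{N'}-\epsilon)+\theta_1-\epsilon>\theta_1$, which yields $f=O(e^{-(\theta_1+\delta)|x|})$. Knowing only $\rho_\gamma\to0$ gives $f=o(u_1)$, which does not guarantee $\inte f(y)e^{\theta_1|y|}dy<\infty$. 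Your comparison argument can supply the missing bounds for $|u_i|$, e.g.\ using $G_{\mu'}$ with $\mu_i<\mu'<0$ as a supersolution outside a large ball.

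\textbf{The asymptotic constant.} The identity $u_1\sim\tfrac{4a}{3}\big(\inte\rho_\gamma^{1/3}u_1\big)G_{\mu_1}$ is false. The analogy with \eqref{low} breaks down: there the kernel is algebraic and $G(x-y)/G(x)\to1$. Here $G_{\mu_1}(x-y)/G_{\mu_1}(x)\to e^{\theta_1\hat x\cdot y}$ for fixed $y$, where $\hat x:=x/|x|$. Dominated convergence actually gives $u_1(x)=(1+o(1))\tfrac{4a}{3}\big(\inte f(y)e^{\theta_1\hat x\cdot y}dy\big)G_{\mu_1}(x)$. The rate $\theta_1+\delta$ is what controls both the weight $e^{\theta_1|y|}$ and the region where $|y|$ is comparable to $|x|$.

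This coefficient is positive and continuous in $\hat x$. It is independent of $\hat x$ only if $f$ is radial about the origin, and it depends on $u_1$ and $\rho_\gamma$, not only on $m$ and $\mu_1$; your formula hid this.

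In fact no argument can produce a direction-independent constant for every minimizer. $\mathcal{E}_a$ and $\mathcal{P}_\lam$ are translation invariant, so $\gamma(\cdot-x_0,\cdot-x_0)$ is again a minimizer with the same $\mu_1$. Since $|x-x_0|=|x|-\hat x\cdot x_0+o(1)$, its first orbital has the asymptotic profile of $u_1$ multiplied by $e^{\theta_1\hat x\cdot x_0}$.

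So once both points are repaired, your approach proves $u_1(x)=(1+o(1))C(\hat x)|x|^{-\alpha}e^{-\theta_1|x|}$. Here $\alpha=\tfrac52,\tfrac32,1$ in the three cases, and $C(\hat x)$ is bounded between two positive constants. Part (2) should be aimed at, and stated in, that form.
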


Since the proof of Lemma \ref{prop34} is similar to that of  \cite[Proposition 2.2]{cb3},  for simplicity we omit the detailed proof.  We next introduce the following lemma, whose proof is left to Appendix A.

\DETAILS{
(2). Let $\gamma=\sum\limits_{i=1}^{N-1}|u_{i}\rangle \langle u_{i}|+(\lam-N+1)|u_{N}\rangle \langle u_{N}|$ be a minimizer of $E_{a}(\lam)$, where $N\in \mathbb{N}^{+}$ is the smallest integer such that $\lam\le N$. It follows from (\ref{uieqV}) that $u_{i}(x)$ satisfies
\begin{align}\label{332}
	u_{i}(x)=\frac{4}{3}aG^{m}_{i}*\Big(\rho_{\gamma}^{\frac{1}{3}}u_{i}\Big)\ \ \text{in}\ \ \R^{3},
\end{align}
where $G^{m}_{i}(x)$ is the Green's function of $\sqrt{-\Delta+m^{2}}-m-\mu_{i}$ in $\R^{3}$.

Note from \cite[Lemma 2.3]{cb3} that $G^{m}_{i}\in L^{t}(\R^{3})$ for any $t\in [1, \frac{3}{2})$, which indicates that
\begin{align}\label{122}
	G^{m}_{i}\in L^{\frac{p}{p-1}}(\R^{3}),\ \  \text{for some}\  \  p\in (3,6]  \ \ i=1, \cdots, N.
\end{align}
By a similar argument of (\ref{120}), we then conclude from (\ref{332}), (\ref{122}) and \cite[Lemma A.2]{YNTS} that
\begin{align}\label{claim333}
	u_{i}\in C(\R^{3}, \C)\ \ \text{and}\ \  u_{i}(x)\to 0\ \  \text{as} \ \ |x|\to \infty, \ \ i=1, \cdots, N.
\end{align}

It follows from (\ref{332}) and \cite[Lemma 2.3]{cb3} that there exists a constant $C=C(c, m, \mu_{i})>0$ such that
\begin{align}\label{c336}
	|u_{i}(x)|\le C\int_{\R^{3}}\Big(\dfrac{1}{|x-y|}+\dfrac{1}{|x-y|^{2}}\Big)e^{-\theta_{i}|x-y|}\rho_{\gamma}^{\frac{1}{3}}(y)|u_{i}(y)|dy.
\end{align}
where $\theta_{i}>0$ is as in (\ref{thetaidef}),  $i=1,\cdots, N$. Following (\ref{claim333}) and (\ref{c336}), the exponential decay (\ref{uiexp}) can be proved in the similar way of proving \cite[Lemma 3.3]{cb1}.

3. Let $\mu_{i}<0$ and $\theta_{i}>0$ be given by (\ref{uieqV}) and (\ref{thetaidef}), respectively, $i=1, \cdots, N$. Since $\mu_{1}\le \cdots \le \mu_{N}<0$, one can check from the definition of $\theta_{i}$ that $\theta_{1}\ge \cdots \ge \theta_{N}>0$. Hence we deduce from (\ref{uiexp}) that for any $0<\bar{\theta}_{i}<\theta_{i}$, $i=1,\cdots, N$, there exists a constant $C=C(m, \bar{\theta}_{1}, \cdots, \bar{\theta}_{N})>0$ such that
\begin{align}\label{c337}
	|\rho_{\gamma}^{\frac{1}{3}}(y)u_{i}(y)|\le Ce^{-\big(\frac{2}{3}\bar{\theta}_{N}+\bar{\theta}_{i}\big)|x|}\ \ \text{in}\ \ \R^{3},\ \ i=1,\cdots, N.
\end{align}
We can choose $\bar{\theta}_{i}\in (0, \theta_{i})$ such that $1<1+\frac{\theta_{i}-\bar{\theta}_{i}}{2\bar{\theta}_{N}}<\frac{4}{3}$ for all $i=1,\cdots, N$. This yields that $\frac{2}{3}\bar{\theta}_{N}+\bar{\theta}_{i}>\theta_{i}$ for all $i=1,\cdots, N$. Together with (\ref{c337}), one gets that there exists $\delta_{1}\in (0, \frac{2}{3}\bar{\theta}_{N}+\bar{\theta}_{i}-\theta_{i})$ such that
\begin{align}
	|\rho_{\gamma}^{\frac{1}{3}}(y)u_{1}(y)|\le Ce^{-(\theta_{i}+\delta_{i})|x|}\ \ \text{in}\ \ \R^{3},\ \ i=1,\cdots, N.
\end{align}
Moreover, note from (\ref{332}) that
\begin{align}\label{b340}
u_{1}(x)=\frac{4}{3}aG_{1}^{m}*(\rho_{\gamma}^{\frac{1}{3}}u_{1})\ \  \text{in} \  \  \R^{3}.
\end{align}
Denoting $h(x):=G_{1}^{m}(x)$ and $f(x):=\rho_{\gamma}^{\frac{1}{3}}u_{1}$, one can deduce from (\ref{b340}) and \cite[Lemma 2.4]{cb3} that the estimate (\ref{b332}) holds true. Therefore, the proof of Proposition \ref{prop34} is complete.\qed
}

\begin{lem}\label{lem34}
For $N=1,2$, let $E_{a}(N)$  and $D_{4/3,N}$ be defined by  $(\ref{problem})$ and  $(\ref{LTineq})$, respectively. Then we have the following conclusions:
\begin{enumerate}
\item [(1)] If $0<a< D_{4/3,1}$, then $E_{a}(1)$ admits at least one minimizer. 	
\item [(2)]  If $0<a< D_{4/3,2}$, then $E_{a}(2)$ admits minimizers under the following condition
\begin{align}\label{3.13}
E_{a}(2)<2E_{a}(1).
\end{align}
\end{enumerate}
\end{lem}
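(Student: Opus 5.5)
We argue by the concentration-compactness method applied to a minimizing sequence, using the relaxed problem \eqref{equver} and the subadditivity of Lemma \ref{lem32} (2). Fix $N\in\{1,2\}$ and $0<a<D_{4/3,N}$, and let $\{\gamma_n\}\subset\mathcal{P}_N$ be a minimizing sequence of $E_a(N)$.

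\textbf{Step 1: coercivity and a weak limit.} By \eqref{lowerbd} we have $\mathcal{E}_a(\gamma_n)\ge (1-a/D_{4/3,N})\tr(\sqrt{-\Delta}\gamma_n)-mN$. Hence $\tr(\sqrt{-\Delta}\gamma_n)$ is bounded, and so each orbital is bounded in $H^{1/2}$. By the inequality $(\sqrt{\rho_\gamma},\sqrt{-\Delta}\sqrt{\rho_\gamma})\le\tr(\sqrt{-\Delta}\gamma)$, the sequence $\sqrt{\rho_{\gamma_n}}$ is bounded in $H^{1/2}$, so $\rho_{\gamma_n}$ is bounded in $L^1\cap L^{3/2}$. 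Since $E_a(N)<0$ by Lemma \ref{lem32} (1), $\int\rho_{\gamma_n}^{4/3}$ stays bounded away from $0$. A Lieb-type lemma (vanishing would force $\rho_{\gamma_n}\to0$ in $L^{4/3}$) then gives translations $y_n$ such that $\gamma_n(\cdot+y_n,\cdot+y_n)\rightharpoonup\gamma$ weakly-$*$ in trace class, with $\gamma\neq0$, $0\le\gamma\le1$, and $\tr\gamma=\lambda\in(0,N]$.

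\textbf{Step 2: dichotomy estimate.} Using an IMS localization with smooth $\chi_R^2+\eta_R^2=1$, we split $\gamma_n\approx\chi_R\gamma_n\chi_R+\eta_R\gamma_n\eta_R$. The commutator error of $\sqrt{-\Delta+m^2}$ is $O(1/R)$, as in \cite{LiYau,ims2}, and $\rho^{4/3}$ splits up to $o(1)$ by the local compactness of $\sqrt{\rho_{\gamma_n}}$ in $L^r_{loc}$, $r<3$. Passing to the limit and using \eqref{equver} for the outer piece, we obtain
\[
E_a(N)\ge \mathcal{E}_a(\gamma)+E_a(N-\lambda).
\]
Here we use weak lower semicontinuity of the kinetic term, strong local convergence of the density for the inner piece, and the convention $E_a(0)=0$. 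Since $\gamma\in\mathcal{K}_\lambda$, we have $\mathcal{E}_a(\gamma)\ge E_a(\lambda)$.

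\textbf{Step 3: excluding $\lambda<N$.} For $N=1$, Lemma \ref{lem32} (3) shows $E_a(\lambda)$ is strictly decreasing. The scaling $\gamma\mapsto$ a fractional multiple suggests the strict inequality $E_a(1)<E_a(\lambda)+E_a(1-\lambda)$ for $0<\lambda<1$. Concretely, if $u$ is nearly optimal for $E_a(1)$, then $\theta|u\rangle\langle u|$ has energy $\theta T-a\theta^{4/3}\int\rho^{4/3}$, which is strictly subadditive in $\theta$ because $\theta^{4/3}$ is strictly superlinear. This yields $E_a(\theta)>\theta E_a(1)$, and hence $\lambda=1$.

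For $N=2$ and $\lambda\in(0,2)$, we combine $E_a(2)\ge E_a(\lambda)+E_a(2-\lambda)$ with the same strict scaling inequality $E_a(t)>\lfloor\cdot\rfloor$-type bounds on fractional parts. This reduces the only possible splitting to $\lambda=1$, which would give $E_a(2)\ge 2E_a(1)$, contradicting \eqref{3.13}. The fractional cases are handled by applying strict superlinearity to the fractional orbital of each piece. This is the main obstacle: we must verify $E_a(\lambda)+E_a(2-\lambda)>E_a(2)$ for non-integer $\lambda$. We would try to write the piece with fractional occupation $t\in(0,1)$ and show its energy strictly exceeds $t$ times the full occupation, then recombine the pieces using Lemma \ref{lem32} (2).

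\textbf{Step 4: conclusion.} Once $\tr\gamma=N$, convergence is strong in trace class and $\rho_{\gamma_n}\to\rho_\gamma$ in $L^{4/3}$, so $\gamma$ minimizes over $\mathcal{K}_N$. By the Euler–Lagrange/bathtub argument (as in Lemma \ref{prop34}), minimizers over $\mathcal{K}_N$ are projections, so $\gamma\in\mathcal{P}_N$ is a minimizer of $E_a(N)$.
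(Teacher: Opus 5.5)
\textbf{Step 3 for $N=2$ has a genuine gap.} Your route (coercivity, non-vanishing from $E_a(N)<0$, a translated weak limit $\gamma\in\mathcal{K}_\lambda$, an IMS splitting $E_a(N)\ge E_a(\lambda)+E_a(N-\lambda)$, then strict binding to force $\lambda=N$) is the paper's proof of part (1). For part (2) the paper gives no details and cites \cite[Theorem 27]{Geo}.

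You never prove $E_a(2)<E_a(\lambda)+E_a(2-\lambda)$ for non-integer $\lambda\in(0,2)$. You flag it as the main obstacle and only say what you would try. Without it, dichotomy with fractional $\lambda$ is not excluded, so (2) is not proved.

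Your heuristic (the fractional piece has energy strictly above $t$ times the fully occupied one) is also not the right statement for the piece in $\mathcal{P}_{2-\lambda}$, which has one full and one fractional orbital. What you need is concavity of $E_a$ on $[1,2]$ and on $[0,1]$:
\begin{itemize}
\item For fixed orthonormal $u_1,u_2$, the map $t\mapsto\mathcal{E}_a(|u_1\rangle\langle u_1|+t|u_2\rangle\langle u_2|)$ is concave on $[0,1]$, because the kinetic term is linear and $-a\int\rho^{4/3}$ is concave.
\item Hence it is bounded below by $(1-t)E_a(1)+tE_a(2)$. Taking the infimum gives $E_a(1+t)\ge(1-t)E_a(1)+tE_a(2)$, and in the same way $E_a(t)\ge tE_a(1)$.
\item For $\lambda\in(0,1)$ this gives
\[
E_a(\lambda)+E_a(2-\lambda)\ge 2\lambda E_a(1)+(1-\lambda)E_a(2)>E_a(2)
\]
by \eqref{3.13}. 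The case $\lambda\in(1,2)$ is symmetric, and $\lambda=1$ is excluded directly by \eqref{3.13}. This closes Step 3.
\end{itemize}

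\textbf{The $N=1$ strictness argument runs the wrong way.} Starting from a near-optimizer $u$ of $E_a(1)$ and passing to $\theta|u\rangle\langle u|$ only gives an \emph{upper} bound on $E_a(\theta)$. Subadditivity of $\theta\mapsto\theta T-a\theta^{4/3}\int\rho^{4/3}$ for one fixed $u$ does not pass to the infima. You must scale the other way, as in \eqref{b342}--\eqref{b343}. Take a minimizing sequence $\theta|v_n\rangle\langle v_n|$ of $E_a(\theta)$ with $\|v_n\|_2=1$, and set $I_n:=\int|v_n|^{8/3}$. Then
\[
\theta E_a(1)\le \theta\,\mathcal{E}_a(|v_n\rangle\langle v_n|)=\mathcal{E}_a(\theta|v_n\rangle\langle v_n|)-a(\theta-\theta^{4/3})I_n .
\]
Moreover $\liminf_n I_n>0$, since $a\theta^{4/3}I_n\ge-\mathcal{E}_a(\theta|v_n\rangle\langle v_n|)\to-E_a(\theta)>0$. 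This gives $E_a(\theta)>\theta E_a(1)$ for $\theta\in(0,1)$.

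\textbf{Minor point on Step 4.} No bathtub argument is needed there. Trace-norm convergence of the projections $\gamma_n$ gives $\gamma^2=\gamma$, so $\gamma$ is already a rank-$N$ projection. Also, Lemma \ref{prop34} concerns minimizers in $\mathcal{P}_\lambda$, not in $\mathcal{K}_N$.
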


Applying Lemmas \ref{lem32}--\ref{lem34},  we are now able to complete the proof of  Theorem \ref{th21}.

\vspace{0.2cm}

\noindent \textbf{Proof of Theorem \ref{th21}.} Using Lemma   \ref{prop34} (2),  the strict  binding inequality \eqref{3.13}  for $0<a< D_{4/3,2}$ can be proved by the similar  argument of  \cite[Theorem 1.1]{cb3}. Once the inequality \eqref{3.13} holds true, then the proof of Theorem \ref{th21} is complete in view of
 Lemmas \ref{lem32}, \ref{prop34} (1) and \ref{lem34} (2).   \qed

\section{Limiting Behavior of Minimizers as $a\nearrow D_{4/3,N}$}
In this section, we mainly prove Theorem \ref{th22} on the limiting behavior of minimizers for $E_{a}(2)$ as $a\nearrow D_{4/3,2}$, where $E_{a}(2)$ and $D_{4/3,2}>0$ are  defined by  \eqref{problem} and  (\ref{LTineq}), respectively.

Towards the above purpose, we define
\begin{equation}\label{dstar}
\begin{split}
d_{*}:=\inf \Big\{ & \tr\frac{\gamma}{\sqrt{-\Delta}}:\ \gamma \   \text{is an optimizer of}\  \\ \qquad\quad  & D_{4/3, 2} \ \text{satisfying}\  \|\gamma\|=\tr(\sqrt{-\Delta}\gamma)=1
\Big\}.
\end{split}
\end{equation}
The following lemma shows that the optimal constant $d_*\in(0, \infty)$ can be attained.

\begin{lem}\label{lem41}
The optimal constant $d_{*}$ satisfies $0<d_*< \infty$ and has an optimizer.
\end{lem}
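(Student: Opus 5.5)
The plan is to check that the admissible set in \eqref{dstar} is nonempty, show that the functional $\gamma\mapsto\tr(\gamma/\sqrt{-\Delta})$ is finite and bounded below by a positive constant on it, and then extract a minimizer from a minimizing sequence by a compactness argument.

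\emph{Nonempty and $d_*<\infty$.} By Proposition \ref{th11} (1), (3), there exists an optimizer $\gamma=\sum_{i=1}^2|w_i\rangle\langle w_i|$ of $D_{4/3,2}$ of rank $2$ with $\|\gamma\|=1$. Since the quotient in \eqref{def:eN} is invariant under the dilation $\gamma_t(x,y)=t^3\gamma(tx,ty)$, and $\tr(\sqrt{-\Delta}\gamma_t)=t\tr(\sqrt{-\Delta}\gamma)$ while $\|\gamma_t\|=1$, we can normalize so that $\tr(\sqrt{-\Delta}\gamma)=1$. Finiteness of $\tr(\gamma/\sqrt{-\Delta})=\sum_i\int|\hat w_i|^2|\xi|^{-1}d\xi$ follows from the Hardy–Kato type inequality $\int|\hat w|^2|\xi|^{-1}\le C\|w\|_{L^{6/5}}^2$ (the dual of $\dot H^{1/2}\subset L^3$), because $w_i\in L^{6/5}$ by the decay \eqref{a15}.

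\emph{$d_*>0$.} For any admissible $\gamma$, Cauchy–Schwarz in Fourier space gives
\[
2=\tr\gamma\le\big(\tr\sqrt{-\Delta}\gamma\big)^{1/2}\big(\tr\gamma/\sqrt{-\Delta}\big)^{1/2},
\]
so $\tr(\gamma/\sqrt{-\Delta})\ge4$. Here $\tr\gamma=2$ because every optimizer has rank $2$ with $n_j=1$, by Proposition \ref{th11} (2), (3).

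\emph{Attainment.} Let $\gamma_k=\sum_{i=1}^2|w_i^k\rangle\langle w_i^k|$ be a minimizing sequence. It is bounded in $H^{1/2}$, with $\tr\gamma_k=2$, and by optimality $\int\rho_{\gamma_k}^{4/3}=D_{4/3,2}^{-1}$. We rule out vanishing by Lions' lemma: vanishing would force $\rho_{\gamma_k}\to0$ in $L^{4/3}$, contradicting the last identity. So after translations $w_i^k\rightharpoonup w_i$ weakly in $H^{1/2}$ with $\gamma:=\sum_i|w_i\rangle\langle w_i|\neq0$. The main obstacle is to exclude dichotomy, that is, to show $\tr\gamma=2$. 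For this we use the strict inequality $D_{4/3,2}<D_{4/3,1}$ from Proposition \ref{th11} (3) and the standard concentration-compactness argument for optimizers (as in the existence part of Lemma \ref{lem2.1}, via IMS localization). If mass split into two pieces, each of rank at most $2$ and each carrying part of the kinetic energy and of $\int\rho^{4/3}$, we would get the inequality $\tr\sqrt{-\Delta}\gamma_k\ge D'\int\rho_{\gamma_k}^{4/3}+o(1)$ with $D'>D_{4/3,2}$ unless one piece carries everything. We deduce that $\gamma$ is itself an optimizer of $D_{4/3,2}$ with $\rho_{\gamma_k}\to\rho_\gamma$ in $L^{4/3}$. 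Weak lower semicontinuity then gives $\tr\sqrt{-\Delta}\gamma\le1$, and optimality forces equality, so $w_i^k\to w_i$ strongly in $\dot H^{1/2}$. Since $\tr\gamma=2$, they also converge strongly in $L^2$. Hence $\|\gamma\|=1$ and $\gamma$ is admissible. Finally, $\tr(\gamma/\sqrt{-\Delta})\le\liminf_k\tr(\gamma_k/\sqrt{-\Delta})=d_*$ by Fatou's lemma in Fourier variables, so $\gamma$ attains $d_*$.

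A subtle point is that the $\liminf$ could be infinite along a bad sequence. This is harmless, because the sequence is minimizing, so these quantities are bounded by $d_*+1$.
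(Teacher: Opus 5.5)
\textbf{There is a genuine gap in the attainment step}, exactly where you place the main obstacle.

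Your claim is that a splitting would give $\tr(\sqrt{-\Delta}\gamma_k)\ge D'\int\rho_{\gamma_k}^{4/3}+o(1)$ with $D'>D_{4/3,2}$. This does not follow from $D_{4/3,2}<D_{4/3,1}$. After IMS localization, the pieces $\chi\gamma_k\chi$ and $\eta\gamma_k\eta$ each have rank at most $2$ and norm at most $1$, as you note yourself. So the only bound available for each piece is the $D_{4/3,2}$-inequality, and summing the two bounds returns the inequality you started from, with no strict gain.

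The constant $D_{4/3,1}$ enters only if you show that each non-negligible piece is effectively of rank one. This can be done, but the argument is absent. The equality chain forces the operator norm of each such piece to tend to $1$. Since the traces add up to $2$, both second eigenvalues then tend to $0$, and discarding them costs $o(1)$ in $\int\rho^{4/3}$.

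Moreover, the $D'$-mechanism only concerns pieces carrying $\int\rho^{4/3}$. It cannot rule out $L^2$-mass escaping with no $\int\rho^{4/3}$. Your later ``since $\tr\gamma=2$'' is therefore unsupported at that point, and you use it to derive $\|\gamma\|=1$. The logic has to run the other way: first $\|\gamma\|=1$, then Proposition \ref{th11} gives $\tr\gamma=2$.

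The repair is to drop the $D_{4/3,1}$ argument altogether. For the nonzero weak limit $\gamma=\sum_j|w_j\rangle\langle w_j|$ you have $\|\gamma\|\le 1$, because the Gram matrix of weak limits of an orthonormal pair is $\le\mathbb{I}$. So it suffices to prove $\tr(\sqrt{-\Delta}\gamma)=D_{4/3,2}\int\rho_\gamma^{4/3}$. The argument then runs as follows.
\begin{enumerate}
\item The identity gives $\|\gamma\|^{1/3}\ge1$, which forces $\|\gamma\|=1$, so $\gamma$ is a norm-one optimizer.
\item By Proposition \ref{th11}, $\gamma$ is a rank-two projection. Hence $\|w_j\|_2=1$, and $w_j^k\to w_j$ strongly in $L^2$.
\item Interpolation gives $\int\rho_\gamma^{4/3}=D_{4/3,2}^{-1}$, hence $\tr(\sqrt{-\Delta}\gamma)=1$.
\item Lower semicontinuity of $\tr(\gamma/\sqrt{-\Delta})$ finishes the proof.
\end{enumerate}
The paper obtains the needed identity by passing to the weak limit in the Euler--Lagrange system \eqref{4.2}, then combining the limit equation \eqref{leq} with the Pohozaev identity \eqref{ide}. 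Alternatively, the identity follows from the splitting chain \eqref{4.19}--\eqref{c4381} used in the proof of Lemma \ref{lem42}, which shows directly that the weak limit is an optimizer with $\|\gamma\|=1$.

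There is also a minor slip in your finiteness step. The dual of $\dot H^{1/2}\subset L^3$ is $\int|\hat w|^2|\xi|^{-1}\,d\xi\le C\|w\|_{3/2}^2$. The version with $\|w\|_{6/5}^2$ fails by scaling. Your conclusion survives, since \eqref{a15} gives $w_i\in L^{3/2}$. The paper instead uses $(\sqrt{-\Delta})^{-1}\le\frac{\pi}{2}|x|$ together with $\int|x|\rho_\gamma<\infty$.
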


\noindent \textbf{Proof.} By the Hardy-Kato inequality (cf. \cite[Chapter V.5.4]{kato})
\begin{align}\label{hk}
\frac{1}{|x|}\le \frac{\pi}{2}\sqrt{-\Delta}  \  \ \  \text{on}\    \   L^{2}(\R^{3}, \C),
\end{align}
one can calculate from Proposition \ref{th11} that for any $\gamma$ satisfying \eqref{dstar},
\begin{align*}
4=(\tr\gamma)^{2}\le \big(\tr\sqrt{-\Delta}\gamma\big)  \tr\frac{\gamma}{\sqrt{-\Delta}}=\tr\frac{\gamma}{\sqrt{-\Delta}}\le \frac{\pi}{2} \int_{\R^{3}}|x|\rho_{\gamma}dx<\infty,
\end{align*}
where the last inequality follows from the estimate \eqref{a15}.
This thus  yields that $d_{*}\in (0, \infty)$ holds true.

Let $\{\gamma_{n}\}=\{\sum_{j=1}^{2}|w^{n}_{j}\rangle \langle w^{n}_{j}|\}$ be a minimizing sequence of $d_{*}$,  where the orthonormal system $(w^{n}_{1}, w^{n}_{2})$ solves the following  nonlinear system
\begin{equation}\label{4.2}
\Big(\sqrt{-\Delta}-\frac{4}{3}D_{4/3,2}\rho_{\gamma_{n}}^{\frac{1}{3}}\Big)w^{n}_{j}=\mu^{n}_{j}w^{n}_{j}\ \  \text{in}\ \ \R^3
\end{equation}
for some $\mu_{1}^{n}<\mu_{2}^{n}<0$. Since $\tr\big(\sqrt{-\Delta}\gamma_{n}\big)=1$, we obtain that $\{w_j^{n}\}_n$ and  $\{\sqrt{\rho_{\gamma_{n}}}\}$ are bounded  uniformly in $H^{\frac{1}{2}}(\R^{3}, \C)$ for $j=1,2$. Note from (\ref{def:eN})  that
\begin{align}\label{lim}
\lim\limits_{n\to \infty} \int_{\R^{3}}\rho_{\gamma_{n}}^{\frac{4}{3}}dx=\frac{1}{D_{4/3,2}}>0.
\end{align}
This then implies  from \cite[Remark 2.10]{Concen} that the vanishing case of $\{\sqrt{\rho_{\gamma_{a_n}}}\}$  does not occur.
We thus deduce that there exist  $\{y_{n}\}\subset \R^{3}$ and  $(w_{1}, w_{2})\in \Big(H^{\frac{1}{2}}(\R^{3}, \C)\Big)^{2}\setminus \{0\}$ such that up to a subsequence if necessary,
\begin{align}\label{weak}
w_{j}^{n}(x+y_{n})\rightharpoonup w_{j}\ \ \text{weakly in}\ \  H^{\frac{1}{2}}(\R^{3}, \C)\ \ \text{as}\ \ n\to \infty, \ \, j=1,2.
\end{align}
We then derive from \eqref{4.2} that
\begin{align}\label{leq}
\Big(\sqrt{-\Delta}-\frac{4}{3}D_{4/3,2}\rho_{\gamma}^{\frac{1}{3}}\Big)w_{j}=\mu_{j}w_{j}\ \ \, \text{in}\, \ \R^3,\  \,  j=1,2,
\end{align}
where $\mu_{j}:=\lim\limits_{n\to\infty}\mu_{j}^{n}\in(-\infty, 0]$ and $\gamma:=\sum\limits_{j=1}^{2}|w_{j} \rangle \langle w_{j}|\neq0$. Multiplying $x\cdot\nabla w_j$ on both sides of (\ref{leq}) and  integrating over $\R^3$, we further obtain that
\begin{align}\label{ide}
\tr\big(\sqrt{-\Delta}\gamma\big)-\frac{3}{2}D_{4/3,2}\int_{\R^{3}}
\rho_{\gamma}^{\frac{4}{3}}dx=\frac{3}{2}\sum_{j=1}^{2}\mu_{j}\int_{\R^{3}}|w_{j}|^{2}dx.
 \end{align}

As a consequence of (\ref{leq}) and (\ref{ide}),  we obtain that
\begin{align*}
\tr\big(\sqrt{-\Delta}\gamma\big)=D_{4/3,2}\int_{\R^{3}}\rho_{\gamma}^{\frac{4}{3}}dx,
\end{align*}
and thus
\begin{align}\label{4.7a}
\tr\big(\sqrt{-\Delta}\gamma\big)\geq\|\gamma\|^{\frac{1}{3}}\tr\big(\sqrt{-\Delta}\gamma\big)\ge D_{4/3,2}\int_{\R^{3}}\rho_{\gamma}^{\frac{4}{3}}dx=\tr\big(\sqrt{-\Delta}\gamma\big),
\end{align}
which then  implies that $\|\gamma\|=1$ and $\gamma$ is an optimizer of $D_{4/3,2}$. Note from Theorem \ref{th11}  that $\|w_{j}\|_{2}^{2}=\|\gamma\|=1=\|w^{n}_{j}\|_{2}^{2}$ hold for $j=1, 2$.   We hence deduce from (\ref{weak}) that
\begin{align*}
w_{j}^{n}(x+y_{n})\to  w_{i}\ \ \text{strongly in}\ \  L^{2}(\R^{3}, \C)\ \ \text{as}\ \ n\to \infty, \ \ j=1,2.
\end{align*}
By the interpolation inequality, we conclude from (\ref{lim}) that
\begin{align*}
\lim\limits_{n\to \infty} \int_{\R^{3}}\rho_{\gamma_{n}}^{\frac{4}{3}}dx=\int_{\R^{3}}\rho_{\gamma}^{\frac{4}{3}}dx=\frac{1}{D_{4/3,2}},
\end{align*}
which further implies from \eqref{4.7a} that $\tr\big(\sqrt{-\Delta}\gamma\big)=1$. Therefore,  we have
$$
\tr \frac{\gamma}{\sqrt{-\Delta}}\geq d_{*}=\liminf\limits_{n\to\infty}\tr \frac{\gamma_n}{\sqrt{-\Delta}}\geq \tr \frac{\gamma}{\sqrt{-\Delta}},
$$
which yields that $\gamma$ is an optimizer of $d_{*}$. This proves Lemma \ref{lem41}. \qed

\subsection{Proof of Theorem \ref{th22}}
In this subsection, we complete the proof of Theorem \ref{th22}. We first establish the following $H^{\frac{1}{2}}$-convergence of  minimizers for $E_{a}(2)$ as $a\nearrow D_{4/3,2}$.


\begin{lem}\label{lem42}
Let $\gamma_{a}=\sum\limits_{i=1}^{2}|u^{a}_{i} \rangle \langle u^{a}_{i}|$ be a minimizer of $E_{a}(2)$, where $a\nearrow D_{4/3,2}$, and the orthonormal system $(u^{a}_{1},\,  u^{a}_{2})$ satisfies \eqref{1.18}.
Then there exist a subsequence $(u^{a_n}_{1},\,  u^{a_n}_{2})$ of $(u^{a}_{1},\,  u^{a}_{2})$ and a sequence $\{y_{a_n}\}\subset \R^{3}$ such that for $i=1,2$,
\begin{align}\label{d43}
w_{i}^{a_n}(x)&:=\eps_{a_n}^{\frac{3}{2}}u_{i}^{a_n}(\eps_{a_n}(x+y_{a_n}))\to w_{i}(x)\ \ \text{strongly in} \ \ H^{\frac{1}{2}}(\R^{3}, \C) \ \ \text{as}\ \ n\to\infty,
\end{align}
where $a_n\nearrow D_{4/3,2}$ as $n\to\infty$,  $0<\eps_{a_n}:=(\tr\sqrt{-\Delta}\gamma_{a_n})^{-1}\to 0$ as $n\to\infty$, and $\gamma:=\sum\limits_{i=1}^{2}|w_{i} \rangle \langle w_{i}|$ is an optimizer of $D_{4/3,2}$.
\end{lem}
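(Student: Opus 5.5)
We first show that $\gamma_a$ blows up in kinetic energy, i.e. $\tr(\sqrt{-\Delta}\gamma_a)\to\infty$ as $a\nearrow D_{4/3,2}$. By Lemma \ref{lem32} (1) and the continuity of $a\mapsto E_a(2)$, which follows from the trial states $\gamma^{(2)}_t$ used in \eqref{3.5}, we have $E_a(2)\to -2m$. Suppose instead that $\tr(\sqrt{-\Delta}\gamma_{a_n})$ stays bounded along a subsequence. Then we would get a uniform lower bound
\[
\tr(\sqrt{-\Delta+m^2}-\sqrt{-\Delta})\gamma_{a_n}\ge c>0 .
\]
This holds because $\sqrt{|\xi|^2+m^2}-|\xi|\ge m^2/(2\sqrt{|\xi|^2+m^2})$ and the mass of $\hat u_i$ cannot escape to $|\xi|=\infty$ under bounded $H^{1/2}$ norm. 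Combined with $D_{4/3,2}$ this gives
\[
\mathcal{E}_{a_n}(\gamma_{a_n})\ge -2m+c-(a_n-D_{4/3,2})\int\rho^{4/3}\ge -2m+c/2,
\]
which contradicts $E_{a_n}(2)\to-2m$. Hence $\eps_a\to0$.

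Next we rescale. Set $\tilde\gamma_a(x,y):=\eps_a^3\gamma_a(\eps_a x,\eps_a y)$, so that $\tr(\sqrt{-\Delta}\tilde\gamma_a)=1$, $\|\tilde\gamma_a\|=1$ and $\tr\tilde\gamma_a=2$. The energy identity
\[
E_a(2)=\eps_a^{-1}\Big[\tr\sqrt{-\Delta}\tilde\gamma_a-a\int\rho_{\tilde\gamma_a}^{4/3}\Big]-2m+\tr(\sqrt{-\Delta+m^2}-\sqrt{-\Delta})\gamma_a,
\]
together with the facts that the last term is nonnegative and that $E_a(2)+2m\to0$, forces
\[
1-a\int\rho_{\tilde\gamma_a}^{4/3}\le\eps_a\,(E_a(2)+2m)\to0 .
\]
Hence $\int\rho_{\tilde\gamma_a}^{4/3}\to D_{4/3,2}^{-1}$. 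In other words, $\tilde\gamma_a$ is an optimizing sequence for $D_{4/3,2}$, normalized with $\|\cdot\|=1$ and $\tr\sqrt{-\Delta}(\cdot)=1$.

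For compactness we argue exactly as in the proof of Lemma \ref{lem41}. The functions $w_i^a$ are bounded in $H^{1/2}$, and the limit of $\int\rho^{4/3}$ is positive, so non-vanishing follows from \cite[Remark 2.10]{Concen}. This gives translations $y_{a_n}$ and a nonzero weak limit $(w_1,w_2)$. We then pass to the limit in the rescaled Euler--Lagrange equation \eqref{1.18}, which reads
\[
\Big(\eps_a\big(\sqrt{-\Delta/\eps_a^2+m^2}-m\big)-\tfrac43 a\rho_{\tilde\gamma_a}^{1/3}\Big)w_i^a=\eps_a\mu_i^a w_i^a .
\]
To control the right-hand side we need $\eps_a\mu_i^a$ bounded. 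This follows by testing with $w_i^a$, which shows $\eps_a\mu_i^a\in[-C,0]$. The operator $\eps_a(\sqrt{-\Delta/\eps_a^2+m^2}-m)$ converges to $\sqrt{-\Delta}$ in the weak sense on $H^{1/2}$. The limit therefore satisfies \eqref{leq} with $D_{4/3,2}$, and the Pohozaev identity \eqref{ide} combined with the chain \eqref{4.7a} shows that $\gamma$ is an optimizer with $\|\gamma\|=1$. Proposition \ref{th11}(2), which gives $n_j=1$ and rank $2$, then forces $\|w_j\|_2=1$, so the convergence is strong in $L^2$. By interpolation, $\int\rho^{4/3}$ converges. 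Consequently $\tr\sqrt{-\Delta}\gamma=D_{4/3,2}\int\rho_\gamma^{4/3}=1=\lim\tr\sqrt{-\Delta}\tilde\gamma_{a_n}$, and norm convergence plus weak convergence yields strong convergence in $\dot H^{1/2}$, hence in $H^{1/2}$.

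The main obstacle is the first step: excluding a bounded kinetic energy when $E_a(2)\to -2m$. One must rule out that the $u_i^a$ stay bounded in $H^{1/2}$ while nearly saturating the inequality. We handle it with the strict positivity of $\sqrt{-\Delta+m^2}-\sqrt{-\Delta}$ on bounded sets in $H^{1/2}$ with fixed $L^2$ mass, as sketched above. An alternative route, if cleaner, is to note that a bounded sequence would converge (up to translation) to a minimizer of $E_{D_{4/3,2}}(2)$, which does not exist by Lemma \ref{lem32}(1).
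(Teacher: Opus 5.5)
Your proposal is correct, but it differs from the paper at both key steps.

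\emph{Blow-up.} The paper proves $\tr(\sqrt{-\Delta}\gamma_a)\to\infty$ by contradiction via compactness. If the kinetic energy stayed bounded, $\{\gamma_a\}$ would be a bounded minimizing sequence for $E_{D_{4/3,2}}(2)$. Rerunning the existence argument of Theorem \ref{th21}(1) at the critical coupling, it would then converge to a minimizer, contradicting Lemma \ref{lem32}(1). Your argument is quantitative and more self-contained. Along a subsequence with bounded kinetic energy, unit $L^2$ mass keeps at least half of each $|\hat u_i^{a}|^2$ in a fixed ball. Hence $\tr(\sqrt{-\Delta+m^2}-\sqrt{-\Delta})\gamma_a\ge c>0$ and $\mathcal{E}_a(\gamma_a)\ge -2m+c$, which contradicts $E_a(2)\to -2m$. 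This avoids redoing compactness at $a=D_{4/3,2}$ and gives blow-up along the whole family, not just a subsequence.

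\emph{Compactness of the rescaled sequence.} The paper splits $\tilde\gamma_{a_n}$ into inner and outer parts by an IMS localization. It applies \eqref{LTineq} to each part, using $\|\eta_{R_n}\tilde\gamma_{a_n}\eta_{R_n}\|\le 1$, and concludes variationally that the weak limit is an optimizer with $\|\gamma\|=1$. You instead pass to the limit in the rescaled system \eqref{1.18}, using the bound on $\eps_a\mu_i^a$, and then borrow the Pohozaev argument \eqref{ide}--\eqref{4.7a} from Lemma \ref{lem41}. Both routes end the same way: an optimizer with $\|\gamma\|=1$ has $\tr\gamma=2$, which forces $\|w_i\|_2=1$.

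The trade-off is this. The paper's route uses only the inequality, so it applies to arbitrary optimizing sequences. Yours relies on the Euler--Lagrange equation. It also needs enough regularity and decay of the limits $w_i$ to justify the Pohozaev identity, which is delicate if a limiting multiplier vanishes. The paper leaves that point implicit in Lemma \ref{lem41} as well.

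A minor attribution: rank $2$ comes from Proposition \ref{th11}(3), since $D_{4/3,2}<D_{4/3,1}$, not from part (2).
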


\noindent \textbf{Proof.}  
We first claim  that
\begin{align}\label{4.4}
\sup\limits_{a\nearrow D_{4/3,2}}\tr(\sqrt{-\Delta}\gamma_{a})=\infty.
\end{align}
 On the contrary,  assume that
\begin{align}\label{d44}
\sup\limits_{a\nearrow D_{4/3,2}}\tr(\sqrt{-\Delta}\gamma_{a})<\infty.
\end{align}
Note from \eqref{def:eN} that  
\begin{equation}\label{4.7}
\begin{split}
\mathcal{E}_{D_{4/3,2}}(\gamma)
=&\lim\limits_{a\nearrow D_{4/3,2}}\Big[\mathcal{E}_{a}(\gamma)-(D_{4/3,2}-a)\inte\rho^{\frac{4}{3}}_{\gamma}dx\Big]\\
\ge&\lim\limits_{a\nearrow D_{4/3,2}}\Big[E_{a}(2)-\frac{D_{4/3,2}-a}{D_{4/3,2}}\text{Tr}(\sqrt{-\Delta}\gamma)\Big]\\
=&\lim\limits_{a\nearrow D_{4/3,2}}E_{a}(2),\ \, \forall\ \gamma\in\mathcal{P}_2,
\end{split}\end{equation}
where $\mathcal{P}_2$ is defined by \eqref{pn}. Optimizing \eqref{4.7} over  $\gamma\in\mathcal{P}_2$,  we hence obtain that
\begin{align}\label{4.8}
E_{D_{4/3,2}}(2)\geq\lim\limits_{a\nearrow D_{4/3,2}}E_{a}(2).
\end{align}
By the definition of $E_{a}(2)$,  we have
$$
E_{D_{4/3,2}}(2)\leq E_{a}(2),\ \, \forall\ 0<a\leq D_{4/3,2}.
$$
This then implies from \eqref{4.8} that
\begin{align*}
E_{D_{4/3,2}}(2)=\lim\limits_{a\nearrow D_{4/3,2}}E_{a}(2)=\lim\limits_{a\nearrow D_{4/3,2}}\mathcal{E}_{a}(\gamma_{a})\ge \lim\limits_{a\nearrow D_{4/3,2}}\mathcal{E}_{D_{4/3,2}}(\gamma_{a})\ge E_{D_{4/3,2}}(2),
\end{align*}
 and thus $\{\gamma_{a}\}$ is a minimizing sequence of $E_{D_{4/3,2}}(2)$. Consequently,  similar to the proof of Theorem \ref{th21} (1), we can  deduce from (\ref{d44}) that there exists $u_{i}\in H^{\frac{1}{2}}(\R^{3}, \C)$  such that up to translations and a subsequence if necessary,
\begin{align*}
u_{i}^{a}(x)\to u_{i}(x) \ \  \text{strongly in}\ \  H^{\frac{1}{2}}(\R^{3}, \C)\ \ \text{as}\ a\nearrow D_{4/3,2},\ \  i=1,2,
\end{align*}
which yields that $\sum_{i=1}^{2}|u_{i}\rangle \langle u_{i}|$ is a minimizer of $E_{D_{4/3,2}}(2)$. This however contradicts with Theorem \ref{th21} (2).  Therefore, the sequence $\big\{\tr(\sqrt{-\Delta}\gamma_{a})\big\}$ is unbounded as $a\nearrow D_{4/3,2}$.

We now obtain from \eqref{4.4} that there exists a subsequence $\{\gamma_{a_n}\}$ of $\{\gamma_a\}$ such that $$\lim\limits_{n\to\infty}\tr(\sqrt{-\Delta}\gamma_{a_n})=\infty,$$
where $\gamma_{a_n}:=\sum\limits_{i=1}^{2}|u^{a_n}_{i} \rangle \langle u^{a_n}_{i}|$, and  $a_n\nearrow D_{4/3,2}$ as $n\to\infty$.
Setting  $\eps_{a_n}:=(\tr\sqrt{-\Delta}\gamma_{a_n})^{-1}>0$, it then follows from above that  $\lim\limits_{n\to\infty}\eps_{a_n}=0$.
Denote
\begin{align*}\label{wiandef}
\bar{w}_{i}^{a_n}(x)&:=\eps_{a_n}^{\frac{3}{2}}u_{i}^{a_n}(\eps_{a_n}x), \  \  \ \bar{\gamma}_{a_n}:=\sum\limits_{i=1}^{2}|\bar{w}^{a_n}_{i} \rangle \langle \bar{w}^{a_n}_{i}|.
\end{align*}
Since $\tr\big(\sqrt{-\Delta}\bar{\gamma}_{a_n}\big)=1$,  it yields that $\{\sqrt{\rho_{\bar{\gamma}_{a_n}}}\}$ is bounded uniformly in $H^{\frac{1}{2}}(\R^3)$.
Note that
\begin{equation}\label{4.11}
\begin{split} 0&\ge \eps_{a_n}E_{a_n}(2)=\eps_{a_n}\mathcal{E}_{a_n}(\gamma_{a_n})\\
&=\eps_{a_n}\tr\big(\sqrt{-\Delta}\gamma_{a_n}\big)-a_n\eps_{a_n}\int_{\R^{3}}\rho_{\gamma_{a_n}}^{\frac{4}{3}}dx+o(1)\\
 &=\tr\big(\sqrt{-\Delta}\bar{\gamma}_{a_n}\big)-a_n\int_{\R^{3}}\rho_{\bar{\gamma}_{a_n}}^{\frac{4}{3}}dx\\
 &=1-D_{4/3,2}\int_{\R^{3}}\rho_{\bar{\gamma}_{a_n}}^{\frac{4}{3}}dx+o(1)\ \  \text{as}\  \ n\to\infty.
\end{split}
\end{equation}
This then implies from  \cite[Remark 2.10]{Concen}   that the vanishing case of $\{\sqrt{\rho_{\bar{\gamma}_{a_n}}}\}$  does not occur.
Hence,
there exist $\{y_{a_n}\}\subset \R^{3}$ and  $(w_1, w_2)\in \big(H^{\frac{1}{2}}(\R^{3}, \C)\big)^2\backslash\{0\}$ such that up to a subsequence if necessary,
\begin{align}\label{4.15}
w_{i}^{a_n}(x):=\bar{w}_{i}^{a_n}(x+y_{a_n})\rightharpoonup w_{i}\ \ \text{weakly in}\ \,  H^{\frac{1}{2}}(\R^{3}, \C)\ \ \text{as}\ \ n\to\infty, \ \ i=1,2.
\end{align}

Define $\gamma:=\sum\limits_{i=1}^{2}|w_{i} \rangle \langle w_{i}|$ and
\begin{equation*}\label{4.13}
\tilde{\gamma}_{a_n}:=\sum_{i=1}^{2}|w_{i}^{a_n}\rangle \langle w_{i}^{a_n}|=\sum_{i=1}^{2}\eps_{a_n}^{3}\big|u_{i}^{a_n}(\eps_{a_n}(\cdot+y_{a_n}))\big\rangle \big\langle u_{i}^{a_n}(\eps_{a_n}(\cdot+y_{a_n}))\big|.
\end{equation*}
We next prove that
\begin{align}\label{7}
\int_{\R^{3}}\rho_{\gamma}dx=\lim\limits_{n\to\infty}\int_{\R^{3}}\rho_{\tilde{\gamma}_{a_n}}dx.
\end{align}
On the contrary, suppose that
\begin{align}\label{8}
0<\int_{\R^{3}}\rho_{\gamma}dx<\lim\limits_{n\to\infty}\int_{\R^{3}}\rho_{\tilde{\gamma}_{a_n}}dx=2.
\end{align}
Using the strongly local convergence (cf. \cite[Lemma 7.3]{Duke}), there exists a sequence $\{R_{n}\}\subset \R$ satisfying $R_{n}\to \infty$ as $n\to\infty$ such that
\begin{align*}
0<\lim\limits_{n\to\infty}\int_{|x|\le R_{n}}\rho_{\tilde{\gamma}_{a_n}}dx=\int_{\R^{3}}\rho_\gamma dx<2, \ \  \lim\limits_{n\to\infty}\int_{R_{n}\le |x|\le 6R_{n} }\rho_{\tilde{\gamma}_{a_n}}dx=0.
\end{align*}
Take a cut-off function $\chi\in C_{0}^{\infty}(\R^{3}, [0,1])$ satisfying $\chi(x)=1$ for $|x|\le 1$ and $\chi(x)=0$ for $|x|\ge 2$. Define $\chi_{R_{n}}(x):=\chi(\frac{x}{R_{n}})$, $\eta_{R_{n}}(x):=\sqrt{1-\chi_{R_{n}}^{2}}$,
\[w_i^{1n}:=\chi_{R_{n}}w_i^{a_n}, \ \, w_i^{2n}:=\eta_{R_{n}}w_i^{a_n},\]
and
\[\gamma_{1n}:=\sum_{i=1}^2|w_i^{1n}\rangle\langle w_i^{1n}|,\ \, \gamma_{2n}:=\sum_{i=1}^2|w_i^{2n}\rangle\langle w_i^{2n}|.
\]
We then derive that
\begin{align}\label{loccv}
\rho_{\gamma_{1n}}\to \rho_\gamma\  \  \text{and}\ \ \eta_{R_{n}}^{2}\chi^{2}_{3R_{n}}\rho_{\tilde{\gamma}_{a_n}}\to 0 \  \ \text{strongly in} \   L^{1}(\R^{3})\  \text{as}\ n\to\infty.
\end{align}
Applying the IMS-type formula and  the interpolation inequality, we further deduce from (\ref{loccv}) that
\begin{equation}\label{4.19}
\begin{split}
\tr(\sqrt{-\Delta}\tilde{\gamma}_{a_n})&\ge \tr(\sqrt{-\Delta}\gamma_{1n})+\tr(\sqrt{-\Delta}\gamma_{2n})+o(1)\\
&\ge \tr(\sqrt{-\Delta}\gamma)+\tr(\sqrt{-\Delta}\gamma_{2n})+o(1)\ \, \text{as}\ \ n\to\infty,
\end{split}\end{equation}
and
\begin{align}\label{p2}
\lim\limits_{n\to\infty}\int_{\R^{3}}\rho_{\tilde{\gamma}_{a_n}}^{\frac{4}{3}}dx
&=\lim\limits_{n\to\infty}\int_{\R^{3}}\Big(\chi^{2}_{R_{n}}\rho_{\tilde{\gamma}_{a_n}}+\eta_{R_{n}}^{2}\chi^{2}_{3R_{n}}\rho_{\tilde{\gamma}_{a_n}}+\eta_{3R_{n}}^{2}\rho_{\tilde{\gamma}_{a_n}}
\Big)^{\frac{4}{3}}dx\nonumber\\
&=\lim\limits_{n\to\infty}\int_{\R^{3}}\Big(\chi^{2}_{R_{n}}\rho_{\tilde{\gamma}_{a_n}}+\eta_{3R_{n}}^{2}\rho_{\tilde{\gamma}_{a_n}}
\Big)^{\frac{4}{3}}dx\\
&=\lim\limits_{n\to\infty}\int_{\R^{3}}(\rho_{\gamma_{1n}}^{\frac{4}{3}}+\rho_{\gamma_{2n}}^{\frac{4}{3}})dx=\int_{\R^{3}}\rho_{\gamma}^{\frac{4}{3}}dx+\lim\limits_{n\to\infty}\int_{\R^{3}}\rho_{\gamma_{2n}}^{\frac{4}{3}}dx.\nonumber
\end{align}

 Note from \eqref{def:eN} and \eqref{4.11} that
\begin{align}\label{d48}
0&\ge\lim\limits_{n\to\infty} \eps_{a_n}E_{a_n}(2)
=\lim\limits_{n\to\infty} \Big[\tr\big(\sqrt{-\Delta}\tilde{\gamma}_{a_n}\big)-D_{4/3,2}\int_{\R^{3}}\rho_{\tilde{\gamma}_{a_n}}^{\frac{4}{3}}dx\Big]\geq0.
\end{align}
We then calculate from (\ref{def:eN}) and (\ref{4.19})--(\ref{d48}) that
\begin{align}\label{c4381}
0&=\lim\limits_{n\to\infty}\eps_{a_n}E_{a_n}(2)
=\lim\limits_{n\to\infty}\Big[\tr(\sqrt{-\Delta}\tilde{\gamma}_{a_n})-D_{4/3,2}\int_{\R^{3}}
\rho_{\tilde{\gamma}_{a_n}}^{\frac{4}{3}}dx\Big]\nonumber\\
&\ge \tr\big(\sqrt{-\Delta}\gamma\big)-D_{4/3,2}\int_{\R^{3}}\rho_{\gamma}^{\frac{4}{3}}dx\nonumber\\
&\quad+\lim\limits_{n\to\infty}\Big\{\tr(\sqrt{-\Delta}\eta_{R_n}\tilde{\gamma}_{a_n}\eta_{R_n})-D_{4/3,2}\int_{\R^{3}}(\eta_{R_n}^{2}\rho_{\tilde{\gamma}_{a_n}})^{\frac{4}{3}}dx\Big\}\nonumber\\
&\ge \|\gamma\|^{\frac{1}{3}}\tr\big(\sqrt{-\Delta}\gamma\big)-D_{4/3,2}\int_{\R^{3}}\rho_{\gamma}^{\frac{4}{3}}dx\\
&\quad+\lim\limits_{n\to\infty}\Big\{\|\eta_{R_n}\tilde{\gamma}_{a_n}\eta_{R_n}\|^{\frac{1}{3}}\tr(\sqrt{-\Delta}\eta_{R_n}\tilde{\gamma}_{a_n}\eta_{R_n})-D_{4/3,2}\int_{\R^{3}}(\eta_{R_n}^{2}\rho_{\tilde{\gamma}_{a_n}})^{\frac{4}{3}}dx\Big\}\nonumber\\
&\ge \|\gamma\|^{\frac{1}{3}}\tr\big(\sqrt{-\Delta}\gamma\big)-D_{4/3,2}\int_{\R^{3}}\rho_{\gamma}^{\frac{4}{3}}dx\ge 0,\nonumber
\end{align}
where we have used the facts that  $\|\gamma\|\le \liminf\limits_{n\to\infty}\|\tilde{\gamma}_{a_n}\|=1$ and $\|\eta_{R_n}\tilde{\gamma}_{a_n}\eta_{R_n}\|\le \|\tilde{\gamma}_{a_n}\|=1$. This thus  implies that
$\gamma$ is an optimizer of $D_{4/3,2}$ with $\|\gamma\|=1$.
Together with  Theorem \ref{th11},  we  conclude that
\begin{align*}
\int_{\R^{3}}\rho_{\gamma}dx=2=\lim\limits_{n\to\infty}\int_{\R^{3}}\rho_{\tilde{\gamma}_{a_n}}dx,
\end{align*}
which however contradicts with \eqref{8}. This implies that  \eqref{7} holds true.

By \eqref{4.15} and \eqref{7}, we have
\begin{align*}
w_i^{a_n}\to w_i\ \, \text{strongly in}\ \ L^r(\R^3,\C)\ \ \text{as}\ \ n\to\infty, \ \ \forall\ r\in[2, 3),
\end{align*}
which yields from \eqref{c4381}  that
\begin{equation*}\label{4.17}
\lim\limits_{n\to\infty}\tr\big(\sqrt{-\Delta}\tilde{\gamma}_{a_n}\big)=\tr\big(\sqrt{-\Delta}\gamma\big),
\end{equation*}
and hence \eqref{d43} holds true.  This therefore completes the proof of  Lemma \ref{lem42}.  \qed

\vspace{0.05cm}
We next  study the uniform decaying estimate of $\{w_{i}^{a_n}\}$ as $n\to\infty$,  which  will be used to analyze the precise blow-up rate of
minimizers for $E_{a_n}(2)$ as $n\to\infty$.

\begin{lem}\label{lem44}
Let $\{w_{i}^{a_n}\}_n$ be given by  $(\ref{d43})$ for $i=1, 2$, where $a_n\nearrow D_{4/3,2}$ as $n\to\infty$. Then there exists a constant $C>0$, independent of $n>0$, such that for sufficiently large $n>0$,
\begin{align}\label{wndecay}
|w_{i}^{a_n}(x)|\le C(1+|x|^{4})^{-1}\ \, \text{in}\ \,  \R^{3},\ \, i=1,2.
\end{align}
\end{lem}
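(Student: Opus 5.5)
We derive the equation satisfied by the rescaled functions and treat it as a linear problem with a uniformly controlled potential. By \eqref{1.18} and the scaling in \eqref{d43}, $w_i^{a_n}$ satisfies
\begin{align*}
\Big(\sqrt{-\Delta+\eps_{a_n}^2m^2}-\eps_{a_n}m-\frac{4a_n}{3}\rho_{\tilde\gamma_{a_n}}^{\frac13}\Big)w_i^{a_n}=\eps_{a_n}\mu_i^{a_n}w_i^{a_n}=:\nu_i^{n}w_i^{a_n}\ \ \text{in}\ \R^3 ,
\end{align*}
where $\tilde\gamma_{a_n}=\sum_{i=1}^2|w_i^{a_n}\rangle\langle w_i^{a_n}|$.

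The first step is to show that $\nu_i^n\to\mu_i<0$, the eigenvalues in \eqref{Qeq} of the limiting optimizer $\gamma$ from Lemma \ref{lem42}. Testing the equation against $w_i^{a_n}$ and using the strong $H^{\frac12}$ convergence \eqref{d43} gives this convergence, since $\sqrt{|\xi|^2+\eps^2m^2}-\eps m\to|\xi|$ uniformly on $H^{\frac12}$-bounded sets. Consequently $\nu_i^n\le\mu_2/2<0$ for large $n$.

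Next, we obtain uniform $L^\infty$ bounds and uniform decay to zero at infinity. Write $w_i^{a_n}=\frac{4a_n}{3}G^n_i*(\rho_{\tilde\gamma_{a_n}}^{1/3}w_i^{a_n})$, where $G^n_i$ is the Green's function of $\sqrt{-\Delta+\eps_n^2m^2}-\eps_nm-\nu_i^n$. Since $\sqrt{|\xi|^2+\eps^2m^2}-\eps m\ge |\xi|-\eps m$ and $|\nu_i^n|$ is bounded below, a comparison in Fourier space (or subordination) should give $0<G_i^n\le C G_\mu$, where $G_\mu$ is the Green's function of $\sqrt{-\Delta}+|\mu_2|/4$. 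This bound is uniform in $n$, and $G_\mu\in L^{s}$ for $s<3/2$ as in \eqref{a137}. Bootstrapping as in \cite[Lemma A.1]{cb3}, starting from the uniform $H^{\frac12}$ bound, yields uniform $L^r$ bounds for all $r\ge 2$ and then a uniform $L^\infty$ bound. Strong convergence in $L^r$ of $\rho^{1/3}w$, combined with the argument behind \eqref{120} (cf. \cite[Lemma A.2]{YNTS}), then gives $\sup_n|w_i^{a_n}(x)|\to0$ as $|x|\to\infty$. This works because convolution of an $L^{s'}$ kernel with a convergent family in $L^s$ is equicontinuous and uniformly vanishing at infinity.

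The decay rate comes from a comparison argument. Fix $R$ so that $\frac{4a_n}{3}\rho_{\tilde\gamma_{a_n}}^{1/3}\le |\mu_2|/8$ for $|x|\ge R$ and all large $n$. Outside $B_R$, $|w_i^{a_n}|$ is then a subsolution of $(\sqrt{-\Delta+\eps_n^2m^2}-\eps_nm+|\mu_2|/4)f\le 0$, via Kato's inequality for $\sqrt{-\Delta+m^2}$. We compare it with a supersolution of the form $C(1+|x|^4)^{-1}$, or with $C\,G_\mu$, which behaves like $|x|^{-4}$ at infinity (\cite[Lemma C.1--C.2]{CPAM}). Alternatively we split the convolution directly: $|w|\le C\int G_\mu(x-y)\rho^{1/3}|w|(y)dy$. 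Using $G_\mu(z)\le C(1+|z|)^{-4}$ with a uniform $L^\infty$ bound on $w$, we iterate: a bound $|w|\le C(1+|x|)^{-\beta}$ improves to $\min(4,\frac{5}{3}\beta)$ by splitting the integral into $|y|<|x|/2$ and $|y|>|x|/2$, so finitely many steps reach exponent $4$. The first step requires some initial polynomial decay, which we obtain from the uniform $L^2$ and $L^\infty$ bounds through the same splitting.

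The main obstacle is uniformity in $n$ of the Green's function bound. The operator degenerates to $\sqrt{-\Delta}$, and the mass term $\eps_nm$ enters with a negative sign, so we must check that $\sqrt{|\xi|^2+\eps^2m^2}-\eps m-\nu\ge c(|\xi|+|\nu|)$ uniformly, which holds since $\eps m\to0$ while $|\nu|$ stays bounded below. We must also check that the pointwise kernel decay $|x|^{-4}$ holds uniformly. For this we use the representation $G=\int_0^\infty e^{t\nu}e^{-t(\sqrt{-\Delta+\eps^2m^2}-\eps m)}dt$ and the positivity and explicit bounds of the relativistic heat kernel, which is dominated by $e^{t\eps m}$ times the Poisson kernel $Ct(t^2+|x|^2)^{-2}$. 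This gives $G\le C\int_0^\infty e^{-ct}t(t^2+|x|^2)^{-2}dt\le C'|x|^{-4}$ uniformly, which is exactly what the iteration needs.
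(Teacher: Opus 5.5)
Your argument is correct and has the same overall architecture as the paper. Both identify $\lim_n\eps_{a_n}\mu_i^{a_n}=\mu_i<0$ via the limit equation and Proposition \ref{th11}, represent $w_i^{a_n}$ through the Green's function, get uniform $L^\infty$ bounds and uniform vanishing at infinity, and finish with the comparison argument behind \eqref{a15}. Where you differ is how you get the uniform bounds.

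The paper first proves the uniform $H^1$ bound \eqref{d426} by squaring \eqref{sys2} and absorbing $\int\rho_{\tilde\gamma_{a_n}}^{5/3}$ via Sobolev. It upgrades the convergence to $L^r$ for $r<6$, then splits the convolution using only the kernel bound $G_i^{a_n}\le C_g|x|^{-2}$. You skip the $H^1$ estimate. You dominate $G_i^{a_n}$ pointwise by the $n$-independent kernel $G_\mu$ through subordination, and bootstrap Young's inequality from the $H^{1/2}$ bound.

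Your route makes explicit the uniform $|x|^{-4}$ decay of the kernels. That is exactly what makes the final comparison step uniform in $n$, and the paper leaves it implicit in ``similar to the proof of \eqref{a15}''. The paper's route in turn produces \eqref{d426}, which it reuses for the $W^{2,2}$ bound in the proof of Theorem \ref{th22}. Note that the Fourier-multiplier comparison you mention first would not by itself give a pointwise kernel bound; the heat-kernel/subordination argument you give later is what actually works.

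Three points to tidy up.
\begin{itemize}
\item $G_\mu$ is not bounded at the origin, since $G_\mu(z)\sim c|z|^{-2}$ there. The correct uniform bound is $G_\mu(z)\le C\min\{|z|^{-2},|z|^{-4}\}$, which is what your heat-kernel computation gives together with $\int_0^\infty P_t\,dt=c|x|^{-2}$. The local part of any splitting must then be handled by the $L^\infty$ bound.
\item In your alternative iteration, the starting polynomial decay does not follow from the $L^2$ and $L^\infty$ bounds alone. The piece $|y|>|x|/2$ can only be controlled by the uniform smallness at infinity you proved just before. For instance, set $M(r):=\sup_{n\ge n_0}\sup_{|x|\ge r}\max_i|w_i^{a_n}(x)|$. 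The splitting gives $M(r)\le Cr^{-7/2}+CM(r/2)^{5/3}$, and $M(r)\to0$ then yields a first power rate.
\item For the comparison route, $C\,G_\mu$ is admissible as a uniform supersolution. For $\Phi\ge0$ one has $(\sqrt{-\Delta+M^2}-M)\Phi\ge\sqrt{-\Delta}\,\Phi-M\Phi$, because the L\'evy kernel of $\sqrt{-\Delta}$ dominates that of $\sqrt{-\Delta+M^2}-M$ and their difference has total mass $M$. Outside $B_R$ you actually have $\eps_{a_n}\mu_i^{a_n}+\frac{4a_n}{3}\rho_{\tilde\gamma_{a_n}}^{1/3}\le-\frac38|\mu_2|$. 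You should run the comparison with this $3|\mu_2|/8$ rather than $|\mu_2|/4$, so that the spare $|\mu_2|/8$ absorbs $\eps_{a_n}m$.
\end{itemize}
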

\noindent \textbf{Proof.}
We first prove that
\begin{align}\label{d426}
\{w_{i}^{a_n}\}_n \ \  \text{is bounded uniformly in}\     H^{1}(\R^{3}, \C),\ \, i=1,2.
\end{align}
Recall from  \eqref{d43} that $w_{i}^{a_n}(x)\equiv\eps_{a_n}^{\frac{3}{2}}u_{i}^{a_n}(\eps_{a_n}(x+y_{a_n}))$ in $\R^3$, where  $\sum\limits_{i=1}^{2}|u^{a_n}_{i} \rangle \langle u^{a_n}_{i}|$ is a minimizer of $E_{a_n}(2)$ satisfying \eqref{1.18}.  It then follows that
\begin{align}\label{sys2}
\Big[\sqrt{-\Delta+\eps_{a_n}^{2}m^{2}}-\eps_{a_n}m-\frac{4a_n}{3}\rho_{\tilde{\gamma}_{a_n}}^{\frac{1}{3}}\Big]
w^{a_n}_{i}=\eps_{a_n}\mu_{i}^{a_n}w_{i}^{a_n}\ \ \text{in}\ \,  \R^{3},\ \, i=1,2,
\end{align}
where $\mu_{1}^{a_n}<\mu_{2}^{a_n}<0$, $\tilde{\gamma}_{a_n}=\sum_{i=1}^2|w_{i}^{a_n}\rangle\langle w_{i}^{a_n}|$, and $\eps_{a_n}\to 0^+$ as $n\to\infty$.
This yields that
\begin{align}\label{c447}
\sum\limits_{i=1}^{2}\mu_{i}^{a_n}\eps_{a_n}
&=\tr\big(\sqrt{-\Delta+\eps_{a_n}^{2}m^{2}}-\eps_{a_n}m\big)\tilde{\gamma}_{a_n}-
\frac{4a_n}{3}\int_{\R^{3}}\rho_{\tilde{\gamma}_{a_n}}^{\frac{4}{3}}dx,
\end{align}
and
\begin{align}\label{4.32}
&\quad\tr(-\Delta+\eps_{a_n}^{2}m^{2})\tilde{\gamma}_{a_n}\nonumber\\
&=\sum_{i=1}^{2}\big\langle \sqrt{-\Delta+\eps_{a_n}^{2}m^{2}}w_{i}^{a_n}, \sqrt{-\Delta+\eps_{a_n}^{2}m^{2}}w_{i}^{a_n} \big\rangle \nonumber\\
&=\sum_{i=1}^{2}\big\langle \eps_{a_n}mw_{i}^{a_n}+\eps_{a_n}\mu_{i}^{a_n}w_{i}^{a_n}+\frac{4a_n}{3}\rho_{\tilde{\gamma}_{a_n}}^{\frac{1}{3}}w^{a_n}_{i}, \eps_{a}mw_{i}^{a_n}+\eps_{a_n}\mu_{i}^{a_n}w_{i}^{a_n}+\frac{4a_n}{3}\rho_{\tilde{\gamma}_{a_n}}^{\frac{1}{3}}w^{a_n}_{i}\big\rangle\nonumber\\
&=2\eps_{a_n}^{2}m^{2}+\eps_{a_n}^{2}\sum_{i=1}^{2}|\mu_{i}^{a_n}|^{2}+\frac{16a_n^{2}}{9}\int_{\R^{3}}\rho_{\tilde{\gamma}_{a_n}}^{\frac{5}{3}}dx+2\eps_{a_n}^{2}m\sum_{i=1}^{2}\mu_{i}^{a_n}\\
&\quad+\frac{8a_n}{3}\eps_{a_n}m\int_{\R^{3}}\rho_{\tilde{\gamma}_{a_n}}^{\frac{4}{3}}dx+\frac{8a_n}{3}\sum_{i=1}^{2}\eps_{a_n}\mu_{i}^{a_n}\int_{\R^{3}}\rho_{\tilde{\gamma}_{a_n}}^{\frac{1}{3}}|w^{a_n}_{i}|^{2}dx\nonumber\\
&:=I_{a_n}+\frac{16a_n^{2}}{9}\int_{\R^{3}}\rho_{\tilde{\gamma}_{a_n}}^{\frac{5}{3}}dx\nonumber.
\end{align}

Using \eqref{d43} and \eqref{c447},  we obtain that up to a subsequence if necessary,
\begin{equation*}
\begin{split}
\lim\limits_{n\to\infty}\sum\limits_{i=1}^{2}\mu_{i}^{a_n}\eps_{a_n}=\tr\big(\sqrt{-\Delta}\gamma\big)-\frac{4D_{4/3,2}}{3}\int_{\R^{3}}\rho_{\gamma}^{\frac{4}{3}}dx
=-\frac{1}{3}\int_{\R^{3}}\rho_{\gamma}^{\frac{4}{3}}dx,
\end{split}\end{equation*}
where $\gamma$ given by Lemma \ref{lem42} is an optimizer of  $D_{4/3,2}$.  This  implies that
\begin{equation}\label{mu}
\mu_i:=\lim\limits_{n\to\infty}\mu_{i}^{a_n}\eps_{a_n}\in(-\infty, 0],\ \, i=1,2.
\end{equation}
Together with the uniform boundedness of $\{w_i^{a_n}\}_n$ in  $H^{\frac{1}{2}}(\R^3, \C)$ for $i=1,2$, we  thus conclude  that the term $I_{a_n}$ of \eqref{4.32} satisfies
\begin{align}\label{Inbd}
\sup\limits_{n}I_{a_n}<\infty.
\end{align}
By Young's inequality and the interpolation inequality, one can calculate that  for  sufficiently large $n>0$,
\begin{align}\label{d431}
\int_{\R^{3}}\rho_{\tilde{\gamma}_{a_n}}^{\frac{5}{3}}dx&\le \epsilon^{-2}\int_{\R^{3}}\rho_{\tilde{\gamma}_{a_n}}dx+ \epsilon\int_{\R^{3}}\rho_{\tilde{\gamma}_{a_n}}^{2}dx\nonumber\\
&\le 2\epsilon^{-2}+2\epsilon\sum_{i=1}^{2}\int_{\R^{3}}|w_{i}^{a_n}|^{4}dx\nonumber\\
&\le 2\epsilon^{-2}+2\epsilon\sum_{i=1}^{2}\Big(\int_{\R^{3}}|w^{a_n}_{i}|^{3}dx\Big)^{\frac{2}{3}}\Big(\int_{\R^{3}}|w^{a_n}_{i}|^{6}dx\Big)^{\frac{1}{3}}\\
&\le 2\epsilon^{-2}+2\epsilon S_{6}^{-1}\sum_{i=1}^{2}\Big(\int_{\R^{3}}|w^{a_n}_{i}|^{3}dx\Big)^{\frac{2}{3}}\int_{\R^{3}}(|\nabla w_{1}^{a_n}|^{2}+|w_{i}^{a_n}|^{2})dx\nonumber\\
&\le 2\epsilon^{-2}+2C+\epsilon C\tr(-\Delta \tilde{\gamma}_{a_n}),\ \, \forall\  \epsilon>0,\nonumber
\end{align}
where $S_{6}^{-1}=\inf\limits_{u\in H^{1}(\R^{3})}\dfrac{\|u\|^{2}_{H^{1}}}{\|u\|^{2}_{6}}$, and the constant $C>0$ is independent of $n>0$ and $\epsilon>0$.
Choose $\epsilon>0$ small enough so that $\frac{\epsilon16CD^2_{4/3,2}}{9}<1$.
Applying  (\ref{Inbd}) and (\ref{d431}),  we therefore deduce from  (\ref{4.32})  that $\{\tr(-\Delta \tilde{\gamma}_{a_n})\}$ is bounded uniformly in $n>0$. This proves  (\ref{d426}).

By the $H^{\frac{1}{2}}$-convergence of \eqref{d43} and the interpolation inequality, we deduce from  (\ref{d426}) that
\begin{equation*}\label{winlr1}
w^{a_n}_{i}\to w_{i} \ \ \text{strongly in}\ \  L^{r}(\R^{3}, \C)\ \ \text{as}\ \ n\to\infty,\ \,  \forall\ r\in[2, 6),
\end{equation*}
and thus
\begin{align}\label{winlr}
\rho_{\tilde{\gamma}_{a_n}}^{\frac{1}{3}}w^{a_n}_{i}\to\rho_{\gamma}^{\frac{1}{3}}w_{i} \  \ \text{strongly in}\   \  L^{r}(\R^{3}, \C)\ \ \text{as}\ \ n\to\infty,\  \  \forall\ r\in[6/5, 18/5),
\end{align}
where  $\tilde{\gamma}_{a_n}=\sum_{i=1}^2|w_{i}^{a_n}\rangle\langle w_{i}^{a_n}|$ and $\gamma=\sum_{i=1}^2|w_i\rangle\langle w_i|$.
By \eqref{sys2} and  \eqref{mu}, we have
\begin{align}\label{4.31}
\big[\sqrt{-\Delta}-\frac{4}{3}D_{4/3,2}\rho_\gamma^{\frac{1}{3}}\big]w_i=\mu_iw_i\ \,\ \text{in}\ \, \R^3, \ \ i=1, 2.
\end{align}
Let $G_{i}^{a_n}$ be  the Green's function of the operator \[\sqrt{-\Delta+\eps_{a_n}^{2}m^{2}}-\eps_{a_n}m-\eps_{a_n}\mu_{i}^{a_n}\ \ \mbox{in}\,\ \R^3,\, \ i=1, 2.\]
Since $\gamma=\sum_{i=1}^2|w_i\rangle\langle w_i|$ is an optimizer of $D_{4/3,2}$,  it yields from \eqref{4.31} and Theorem \ref{th11} that
\begin{align}\label{4.33}
\lim\limits_{n\to\infty}\eps_{a_n}\mu_{i}^{a_n}=\mu_i<0,\  \  i=1, 2.
\end{align}
The same argument of \cite[Lemma 2.4]{cb3} thus gives that there exists a constant $C_{g}>0$, independent of $i,m, n$ and $x$,  such that for  sufficiently large $n>0$,
\begin{align}\label{gi1}
0<G_{i}^{a_n}(x)<C_{g}|x|^{-2}\  \   \,\text{in}\   \,  \R^{3},\ \, i=1,2.
\end{align}

Denote $f_i^{a_n}:=\frac{4a_{n}}{3}\rho_{\tilde{\gamma}_{a_n}}^{\frac{1}{3}}w^{a_n}_{i}$ for $i=1, 2$. By (\ref{winlr}) and  (\ref{gi1}), one can calculate from (\ref{sys2})  that
for $i=1,2$,
\begin{align}\label{d437}
&\quad\ \|w_{i}^{a_n}\|_{\infty}\nonumber\\
&=\|G_{i}^{a_n}\ast f_i^{a_n}\|_\infty\nonumber\\
&\le \sup_{x\in \R^{3}}\int_{\R^{3}}G_{i}^{a_n}(x-y)|f^{a_n}_{i}(y)|dy\nonumber\\
&\le C_g\sup_{x\in \R^{3}}\Big[\int_{|x-y|\le 1}|x-y|^{-2}|f^{a_n}_{i}(y)|dy
+\int_{|x-y|>1}|x-y|^{-2}|f^{a_n}_{i}(y)|dy\Big]\\
&\le C_g\|f^{a_n}_{i}\|_{16/5}\Big(\int_{|y|\le 1}|y|^{-\frac{32}{11}}dy\Big)^{\frac{11}{16}}+C_g\|f_{i}^{a_n}\|_{5/2}\Big(\int_{|y|>1}|y|^{-\frac{10}{3}}dy\Big)^{\frac{3}{5}}\nonumber\\
&\le C(\|f^{a_n}_{i}\|_{16/5}+\|f_{i}^{a_n}\|_{5/2})<\infty \nonumber\ \  \text{uniformly for sufficiently large}\ n>0,
\end{align}
and
\begin{align}\label{d438}
\lim\limits_{|x|\to \infty}|w_{i}^{a_n}(x)|
&\le \lim\limits_{R\to \infty}\lim\limits_{|x|\to \infty}\Big(\int_{|y|\le R}G_{i}^{a_n}(x-y)|f^{a_n}_{i}(y)|dy\nonumber\\
&\qquad\qquad\qquad\quad +\int_{|y|\ge R}G_{i}^{a_n}(x-y)|f^{a_n}_{i}(y)|dy\Big)\nonumber\\
&=\lim\limits_{R\to \infty}\lim\limits_{|x|\to \infty}\int_{|y|\ge R}G_{i}^{a_n}(x-y)|f^{a_n}_{i}(y)|dy\nonumber\\
&\le \lim\limits_{R\to \infty}\lim\limits_{|x|\to \infty}  \int_{\{y: \ |y|\ge R, \ |x-y|\le 1\}}G_{i}^{a_n}(x-y)|f^{a_n}_{i}(y)|dy \\
&\quad+ \lim\limits_{R\to \infty}\lim\limits_{|x|\to \infty}  \int_{\{y: \ |y|\ge R, \ |x-y|>1\}}G_{i}^{a_n}(x-y)|f^{a_n}_{i}(y)|dy   \nonumber\\
&\le C\lim\limits_{R\to \infty}\Big(\|f_{i}^{a_n}\|_{L^{16/5}(B_{R}^{c})}+\|f_{i}^{a_n}\|_{L^{5/2}(B_{R}^{c})}\Big) \nonumber\\
&=0 \ \  \text{uniformly for sufficiently large}\ n>0.\nonumber
\end{align}
Therefore, similar to the proof of (\ref{a15}), we deduce from (\ref{sys2}), (\ref{4.33}), (\ref{d437}) and (\ref{d438}) that the estimate (\ref{wndecay}) holds true. This completes the proof of Lemma \ref{lem44}.\qed

We are now ready to finish the proof of Theorem \ref{th22}.
\vspace{0.05cm}

\noindent \textbf{Proof of Theorem \ref{th22}.}  Let  $\{w_{i}^{a_n}\}$  and $w_{i}$ be given by Lemma \ref{lem42}, $i=1, 2$, where $a_n\nearrow D_{4/3,2}$ as $n\to\infty$. We first prove that  \eqref{epsd} holds true.

Note  from (\ref{def:eN}) that
\begin{align}\label{es1}
&\quad E_{a_n}(2)+2m=\mathcal{E}_{a_n}(\gamma_{a_n})+2m\nonumber\\
&=\tr\big(\sqrt{-\Delta}\gamma_{a_n}\big)-a_n\int_{\R^{3}}\rho_{\gamma_{a_n}}^{\frac{4}{3}}dx+\tr\big(\sqrt{-\Delta+m^{2}}-\sqrt{-\Delta}\, \big)\gamma_{a_n}\nonumber\\
&\ge \Big(1-\dfrac{a_n}{D_{4/3,2}}\Big)\tr\big(\sqrt{-\Delta}\gamma_{a_n}\big)+\tr\big(\sqrt{-\Delta+m^{2}}-\sqrt{-\Delta}\, \big)\gamma_{a_n}\\
&=\eps^{-1}_{a_n}\Big(1-\dfrac{a_n}{D_{4/3,2}}\Big)\tr\big(\sqrt{-\Delta}\tilde{\gamma}_{a_n}\big)+\eps^{-1}_{a_n}\tr\big(\sqrt{-\Delta+\eps^2_{a_n}m^{2}}-\sqrt{-\Delta}\, \big)\tilde{\gamma}_{a_n}\nonumber\\
&=\eps^{-1}_{a_n}\Big(1-\dfrac{a_n}{D_{4/3,2}}\Big)+\eps_{a_n}m^{2}\tr\frac{\tilde{\gamma}_{a_n}}{\sqrt{-\Delta+\eps^2_{a_n}m^{2}}+\sqrt{-\Delta}},\nonumber
\end{align}
where $\gamma_{a_n}$ and $\eps_{a_n}$ are  as in Lemma \ref{lem42}, and $\tilde{\gamma}_{a_n}=\sum_{i=1}^2|w_{i}^{a_n}\rangle\langle w_{i}^{a_n}|$.
Applying   Hardy-Kato inequality \eqref{hk}  and  Dominated Convergence Theorem, one can check  from \eqref{a15}, \eqref{d43} and  \eqref{wndecay}  that
\begin{align*}\label{es2}
\lim\limits_{n\to\infty}\tr\frac{\tilde{\gamma}_{a_n}}{\sqrt{-\Delta+\eps^2_{a_n}m^{2}}+\sqrt{-\Delta}}=\tr\frac{\gamma}{2\sqrt{-\Delta}}.
\end{align*}
This thus  implies from  (\ref{es1})  that
\begin{equation}\label{left}
\begin{split}
E_{a_n}(2)+2m&\ge  \eps^{-1}_{a_n}\dfrac{D_{4/3,2}-a_n}{D_{4/3,2}}
+\eps_{a_n}\big(1+o(1)\big)\tr\frac{m^2\gamma}{2\sqrt{-\Delta}}\\
&\ge \sqrt{2}m\big(1+o(1)\big) \Big(\dfrac{D_{4/3,2}-a_n}{D_{4/3,2}}\tr\frac{\gamma}{\sqrt{-\Delta}}\Big)^{\frac{1}{2}} \\
&\ge \sqrt{2}m\big(1+o(1)\big) \Big(\dfrac{D_{4/3,2}-a_n}{D_{4/3,2}}d_{*}\Big)^{\frac{1}{2}} \ \ \text{as}\ \ n\to\infty,
\end{split}
\end{equation}
where the identity holds if and only if $\gamma$ is an optimizer of $d_{*}$, and
\begin{align}\label{appr}
\eps_{a_n}\approx  \Big(\dfrac{2(D_{4/3,2}-a_n)}{m^2d_{*}D_{4/3,2}}\Big)^{\frac{1}{2}}
\ \ \text{as}\ \ n\to\infty.
\end{align}
Here $d_*\in(0, \infty)$ is given by \eqref{dstar}.

On the other hand, denote $\gamma^{*}_{t_n}:=t_{n}^{3}\gamma^{*}(t_nx, t_ny)$,  where $\gamma^{*}$ is an optimizer of $d_{*}$, and
\begin{align*}
t_n=\Big(\dfrac{m^2D_{4/3,2}}{2(D_{4/3,2}-a_n)}\tr\frac{\gamma^{*}}{\sqrt{-\Delta}}\Big)^{\frac{1}{2}},\ \,  \forall\, a_n\in (0, D_{4/3,2}).
\end{align*}
By the definition of $d_{*}$, it yields that $\tr\big(\sqrt{-\Delta}\gamma^*\big)=1$, and $\gamma^*$ is an optimizer of $D_{4/3,2}$. We hence calculate  that
\begin{align}\label{es3}
E_{a_n}(2)+2m&\le \mathcal{E}_{a_n}(\gamma^{*}_{t_n})+2m
\nonumber\\
&=\tr\big(\sqrt{-\Delta+m^{2}}-\sqrt{-\Delta}\, \big)\gamma^{*}_{t_n}+\tr\big(\sqrt{-\Delta}\gamma^{*}_{t_n}\big)-a_n\int_{\R^{3}}\rho_{\gamma^{*}_{t_n}}^{\frac{4}{3}}dx\nonumber\\
&=t_n\big(1-a_n/D_{4/3,2}\big)+t_n\tr\big(\sqrt{-\Delta+t_n^{-2}m^{2}}-\sqrt{-\Delta}\, \big)\gamma^{*}\nonumber\\
&=t_n\big(1-a_n/D_{4/3,2}\big)+t_n^{-1}m^2\tr\frac{\gamma^{*}}{\sqrt{-\Delta+t_n^{-2}m^{2}}+\sqrt{-\Delta}}\\
&\leq t_n\big(1-a_n/D_{4/3,2}\big)+t_n^{-1}m^2\tr\frac{\gamma^{*}}{2\sqrt{-\Delta}}\nonumber\\
&=\sqrt{2}m \Big(\dfrac{D_{4/3,2}-a_n}{D_{4/3,2}}d_{*}\Big)^{\frac{1}{2}} ,\ \, \forall\ a_n\in (0, D_{4/3,2}).\nonumber
\end{align}
As a consequence of  (\ref{left})--(\ref{es3}), we obtain that  (\ref{epsd}) holds true.

To establish Theorem \ref{th22},  the rest is to prove that
\begin{align}\label{cvinfy}
w_{i}^{a_n}(x)\to w_{i}(x) \ \ \text{strongly in}  \ L^{\infty}(\R^{3}, \C) \ \ \text{as}\ \ n\to\infty,\ \, i=1,2.
\end{align}
Actually, recall from (\ref{sys2}) that
\begin{align}\label{c457}
\hat{w}^{a_n}_{i}(\xi)=\frac{\frac{4}{3}a_n(\rho^{\frac{1}{3}}_{\tilde{\gamma}_{a_n}}w^{a_n}_{i})\string^(\xi)}{\sqrt{|\xi|^{2}+\eps_{a_n}^{2}m^{2}}-\eps_{a_n}(m+\mu_{i}^{a_n})}\ \ \text{in}\ \, \R^3,\ \, i=1,2.
\end{align}
We then calculate  from \eqref{d426} and \eqref{d437} that 
\begin{equation*}\label{c459}
\begin{split}
\int_{\R^{3}}|\xi|^{4}|\hat{w}^{a_n}_{i}(\xi)|^{2}d\xi
&=\frac{16}{9}a_n^{2}\int_{\R^{3}}\frac{|\xi|^{4}\, \big|(\rho^{\frac{1}{3}}_{\tilde{\gamma}_{a_n}}w^{a_n}_{i})\string^(\xi)\big|^{2}}{\big(\sqrt{|\xi|^{2}+\eps_{a_n}^{2}m^{2}}-\eps_{a_n}(m+\mu_{i}^{a_n})\big)^{2}}\nonumber\\
&\le C\int_{\R^{3}}|\xi|^{2}|(\rho^{\frac{1}{3}}_{\tilde{\gamma}_{a_n}}w^{a_n}_{i})\string^(\xi)|^{2}d\xi\\
&=C\int_{\R^{3}}|\nabla (\rho^{\frac{1}{3}}_{\tilde{\gamma}_{a_n}}w^{a_n}_{i})|^{2}dx\\
&\le \tilde{C}\int_{\R^{3}} \rho^{\frac{2}{3}}_{\tilde{\gamma}_{a_n}}\sum_{i=1}^{2}|\nabla w^{a_n}_{i}|^{2}dx\\
&\le \tilde{C}\sup_{n\ge 1} \|\rho^{\frac{2}{3}}_{\tilde{\gamma}_{a_n}}\|_{\infty}\int_{\R^{3}}\sum_{i=1}^{2}|\nabla w^{a_n}_{i}|^{2}dx\nonumber\\
&<\infty \ \ \text{uniformly for sufficiently large}\ n>0,\nonumber
\end{split}\end{equation*}
which indicates that $\{w^{a_n}_{i}\}_n$ is bounded uniformly in $W^{2,2}(\R^{3}, \C)$ for $i=1,2$. Since the embedding $W^{2,2}(B_{R}, \C)\hookrightarrow C(B_{R}, \C)$ is compact for any $R>0$ (see \cite[Theorem 7.26]{GT}),   we derive that
\begin{align}\label{c456}
w^{a_n}_{i}\to w_{i}\ \ \text{uniformly on any compact domain of $\R^{3}$}\ \  \text{as} \ \ n\to\infty, \  \,  i=1, 2.
\end{align}
Moreover, it follows  from  (\ref{a15}) and  (\ref{wndecay}) that for any $\eps>0$, there exists a sufficiently large constant $R_{\eps}>0$, independent of $n>0$, such that for  sufficiently  large $n>0$,
\begin{align}\label{c455}
\sup_{|x|\ge R_{\eps}} |w^{a_n}_{i}(x)-w_{i}(x)|\le \sup_{|x|\ge R_{\eps}} \big(|w^{a_n}_{i}(x)|+|w_{i}(x)|\big)<\frac{\eps}{2}, \  \ i=1, 2.
\end{align}
We thus derive from (\ref{c456}) and (\ref{c455}) that (\ref{cvinfy}) holds true. This therefore  completes the proof of Theorem \ref{th22}. \qed

We finally remark from (\ref{left}) and (\ref{es3}) that the energy estimate (\ref{spi}) holds true.

\appendix
\section{Appendix}

The purpose of this appendix is to address the proof of Lemma \ref{lem34}.   Throughout this appendix, we assume that  $a\in(0, D_{4/3,1})$, where the best constant $D_{4/3,1}>0$ is defined by \eqref{def:eN}.

\vspace{0.20cm}

\noindent \textbf{Proof of Lemma \ref{lem34}.}  (1). Let  $\{|u_{n}\rangle \langle u_{n}|\}$ be a minimizing sequence of $E_{a}(1)$. Since
\begin{align*}
\mathcal{E}_{a}(|u_{n}\rangle \langle u_{n}|)
&\ge \langle u_{n}, \sqrt{-\Delta}u_{n}\rangle-a\int_{\R^{3}}|u_{n}|^{\frac{8}{3}}dx-m\nonumber\\
&\ge \Big(1-\frac{a}{D_{4/3,1}}\Big)\langle u_{n}, \sqrt{-\Delta}u_{n}\rangle-m,
\end{align*}
one gets  that $\{u_{n}\}$ is bounded uniformly in $H^{\frac{1}{2}}(\R^{3}, \C)$.

Since $E_{a}(1)<0$ holds in view of
Lemma \ref{lem32} (1), it can be checked from \cite[Remark 2.10]{Concen} that the vanishing case of $\{u_n\}$  does not occur, $i.e.$,  there exists $R>0$ such that
\begin{align}\label{b330}
\lim\limits_{n\to \infty}\sup_{y\in \R^{3}}\int_{B_{R}(y)}|u_{n}|^{2}dx>0.
\end{align}
This then yields that  there exists a sequence $\{y_{n}\}\subset\R^3$ such that 
\begin{align}\label{b331}
\lim\limits_{n\to \infty}\int_{B_{R}(y_{n})}|u_{n}|^{2}dx>0,
\end{align}
where $R>0$ is as in \eqref{b330}.
Since the energy functional $\mathcal{E}_a(\cdot)$ is translationally invariant, without loss of generality, we can suppose  $y_n=0$. We thus deduce from \eqref{b331} that  there exists $u\in H^{\frac{1}{2}}(\R^{3},\C)\backslash\{0\}$ such that up to a subsequence if necessary,
\begin{align}\label{a.2}
u_{n}\rightharpoonup u\  \  \text{weakly in} \  \ H^{\frac{1}{2}}(\R^{3},\C) \  \text{as}\ n\to \infty.
\end{align}

By Br\'ezis-Lieb Lemma (cf. \cite{minimax}) and the interpolation inequality,  one can verify from \eqref{a.2} that  if $\|u\|^{2}_{2}=1$,  then
\begin{align*}
u_{n}\to u \  \text{strongly in} \  \ L^{r}(\R^{3},\C) \  \text{as}\ n\to \infty, \ \ \forall\ r\in [2,3),
\end{align*}
and thus
\begin{align*}
E_{a}(1)=\lim\limits_{n\to \infty}\mathcal{E}_{a}(|u_{n}\rangle \langle u_{n}|) \ge \mathcal{E}_{a}(|u\rangle \langle u|) \ge E_{a}(1),
\end{align*}
which further yields that $|u\rangle \langle u|$ is a minimizer of $E_{a}(1)$.  Therefore, in order to complete the proof of  Lemma \ref{lem34} (1),  we only need to prove that  $\|u\|^{2}_{2}=1$.

On the contrary,  assume that $0<\lambda:=\|u\|_2^{2}< 1$.    By the strongly local convergence (cf. [21, Lemma 7.3]), there exists a sequence $\{R_{n}\}$ satisfying $R_{n}\to \infty$ as $n\to \infty$ such that
\begin{align}\label{Levydf}
\lim\limits_{n\to \infty}\int_{|x|\le R_{n}}|u_{n}|^{2}dx=\int_{\R^{3}}|u|^{2}dx\in(0,1), \ \  \  \lim\limits_{n\to \infty}\int_{R_{n}\le |x|\le 6R_{n} }|u_{n}|^{2}dx=0.
\end{align}
Take a cut-off function $\chi\in C_{0}^{\infty}(\R^{3}, [0,1])$ such that  $\chi(x)=1$ for $|x|\le 1$ and $\chi(x)=0$ for $|x|\ge 2$. Define $\chi_{R_{n}}(x):=\chi(\frac{x}{R_{n}})$, $\eta_{R_{n}}(x):=\sqrt{1-\chi_{R_{n}}^{2}}$, and
\begin{equation}\label{eta}
{u}_{1n}:=\chi_{R_{n}}u_{n}, \ \, {u}_{2n}:=\eta_{R_{n}}u_{n}.
\end{equation}
It then follows  from \eqref{Levydf}  and Br\'ezis-Lieb Lemma (cf. \cite{minimax}) that
\begin{align*}\label{loccy}
u_{1n}\to u\  \  \text{and}\ \ \eta_{R_{n}}\chi_{3R_{n}} u_{n}\to 0 \  \ \text{strongly in} \   L^{2}(\R^{3},\C)\ \, \text{as}\ \, n\to \infty.
\end{align*}
Together with the interpolation inequality and the weak lower semicontinuity of norm, 
we further obtain that
\begin{align}
\liminf_{n\to \infty}\big\langle u_{1n}, \sqrt{-\Delta+m^{2}}u_{1n}\big\rangle\ge \big\langle u, \sqrt{-\Delta+m^{2}}u\big\rangle,
\end{align}
and
\begin{align}\label{a.7}
u_{1n}\to u,\  \ \ \eta_{R_{n}}\chi_{3R_{n}}u_{n}\to 0 \  \ \text{strongly in} \   L^{r}(\R^{3},\C)\  \text{as}\  n\to \infty, \ \  \forall\ r\in [2, 3).
\end{align}
Consequently, applying the IMS-type formula (cf. \cite[Lemma A.1]{Duke}), one can calculate from \eqref{eta}--\eqref{a.7} that
\begin{equation}\label{p1}
\begin{split}\lim\limits_{n\to \infty}\big\langle u_{n}, \sqrt{-\Delta+m^{2}}u_{n}\big\rangle
\ge \big\langle u, \sqrt{-\Delta+m^{2}}u\big\rangle+\lim\limits_{n\to \infty}\big\langle u_{2n},\sqrt{-\Delta+m^{2}}u_{2n}\big\rangle,
\end{split}\end{equation}
and
\begin{equation}\label{p3}
\begin{split}\lim\limits_{n\to \infty}\int_{\R^{3}}|u_{n}|^{\frac{8}{3}}dx
&=\lim\limits_{n\to \infty}\int_{\R^{3}}\Big(\chi^{2}_{R_{n}}|u_{n}|^{2}+\eta_{R_{n}}^{2}\chi^{2}_{3R_{n}}|u_{n}|^{2}+\eta_{3R_{n}}^{2}|u_{n}|^{2}
\Big)^{\frac{4}{3}}dx\\
&=\lim\limits_{n\to \infty}\int_{\R^{3}}\Big(\chi^{2}_{R_{n}}|u_{n}|^{2}+\eta_{3R_{n}}^{2}|u_{n}|^{2}
\Big)^{\frac{4}{3}}dx\\
&=\lim\limits_{n\to \infty}\int_{\R^{3}}(|u_{1n}|^{\frac{8}{3}}+|u_{2n}|^{\frac{8}{3}})dx\\
&=\int_{\R^{3}}|u|^{\frac{8}{3}}dx+\lim\limits_{n\to \infty}\int_{\R^{3}}|u_{2n}|^{\frac{8}{3}}dx.
\end{split}\end{equation}
Applying Lemma \ref{lem32} (2),
we then  deduce from \eqref{p1} and \eqref{p3} that
\begin{equation*}
\begin{split}E_{a}(\lam)+E_{a}(1-\lam)&\ge E_{a}(1)=\lim\limits_{n\to \infty}\mathcal{E}_{a}(|u_{n}\rangle \langle u_{n}|)\nonumber\\
&\ge \mathcal{E}_{a}(|u\rangle \langle u|)+ \lim\limits_{n\to \infty}\mathcal{E}_{a}(|u_{2n}\rangle \langle u_{2n}|)\nonumber\\
&\ge E_{a}(\lam)+\lim\limits_{n\to \infty}E_{a}(\|u_{2n}\|_2^2)\nonumber\\
&=E_{a}(\lam)+E_{a}(1-\lam),
\end{split}\end{equation*}
where the last identity follows from  the fact that $\lim\limits_{n\to \infty}\|u_{2n}\|_2^2=1-\lam\in(0,1)$.
This thus implies that $|u\rangle \langle u|$ is a minimizer of $E_{a}(\lam)$,  and
\begin{align}\label{id}
E_{a}(\lam)+E_{a}(1-\lam)=E_{a}(1).
\end{align}

Let $|v_{n}\rangle \langle v_{n}|$ be a minimizing sequence of $E_{a}(1-\lam)$, where $\lam=\|u\|_2^{2}\in (0,1)$. Since $E_{a}(1-\lam)<0$, we get that
\begin{align*}
a\lim\limits_{n\to \infty}\int_{\R^{3}}|v_{n}|^{\frac{8}{3}}dx=\lim\limits_{n\to \infty}\big\langle(v_n, \sqrt{-\Delta+m^{2}}-m)v_{n}\big\rangle-E_{a}(1-\lam)>0,
\end{align*}
and thus
\begin{align}\label{b343}
E_{a}(1)&\le \lim\limits_{n\to \infty} \mathcal{E}_{a}\big((1-\lam)^{-1}|v_{n}\rangle \langle v_{n}|\big) \nonumber\\
&=(1-\lam)^{-1}\Big[ E_{a}(1-\lam)-a\big((1-\lam)^{-\frac{1}{3}}-1\big)\lim\limits_{n\to \infty}\int_{\R^{3}}|v_{n}|^{\frac{8}{3}}dx
\Big]\\
&<(1-\lam)^{-1}E_{a}(1-\lam).\nonumber
\end{align}
Moreover, note that
\begin{align}\label{b342}
E_{a}(1)&\le \mathcal{E}_{a}(\lam^{-1}|u\rangle \langle u|) \nonumber\\
&=\lam^{-1}\Big[ E_{a}(\lam)-a(\lam^{-\frac{1}{3}}-1)\int_{\R^{3}}|u|^{\frac{8}{3}}dx
\Big]<\lam^{-1}E_{a}(\lam).
\end{align}
We then obtain  from (\ref{b343}) and  (\ref{b342}) that
\begin{align*}
E_{a}(1)<E_{a}(\lam)+E_{a}(1-\lam),
\end{align*}
which however contradicts with (\ref{id}), and  Lemma \ref{lem34} (1)  is therefore complete.

(2). Since the proof of Lemma \ref{lem34} (2) is similar to that of \cite[Theorem 27]{Geo} with minor modifications,  for simplicity we omit the detailed proof.  This completes the proof of Lemma \ref{lem34}.  \qed

\end{document}